\pgfplotsset{compat=1.16}
\DeclareMathOperator*{\argmax}{arg\,max}
\DeclareMathOperator*{\argmin}{arg\,min}
\newcommand{\RNum}[1]{\uppercase\expandafter{\romannumeral #1\relax}}
\newcommand{\N}{\mathbb{N}}
\newcommand{\E}{\mathbb{E}}
\newcommand{\bbs}{\mathbb{S}}
\newcommand{\T}{^\intercal}
\newcommand{\cc}{\frac{k-\|E\|_K + \eta\sqrt{(k-\|E\|_K)^2 - 4 k\|E\|_K}}{2k}}
\newcommand{\omtan}{\|E\|_{\mathcal{T}_K(\bar{x})}}
\newcommand{\omek}{\|E\|_K}
\newcommand{\bx}{\bar{x}}
\newcommand{\tvt}{\tilde{v}_{\tilde{t}}}
\theoremstyle{definition}
\newtheorem{theorem}{Theorem}[section]
\newtheorem{proposition}{Proposition}[section]
\newtheorem{lemma}{Lemma}[section]
\newtheorem{corollary}{Corollary}[section]
\newtheorem{example}{Example}
\title{\textbf{Non-Sparse PCA in High Dimensions via Cone Projected Power Iteration}}
\author{Yufei Yi \\\small\href{mailto:yy544@cmu.edu}{yy544@cmu.edu}
\and Matey Neykov\\\small\href{mailto:mneykov@stat.cmu.edu}{mneykov@stat.cmu.edu}}
\date{%
    Department of Statistics and Data Science, Carnegie Mellon University\\%
    5000 Forbes Ave, Pittsburgh, PA 15213, U.S.A.\\[2ex]%
}
\begin{document}
\maketitle

%%%%%%%%%
% Abstract and keywords
%%%%%%%%%
\begin{abstract}
In this paper, we propose a cone projected power iteration algorithm to recover the first principal eigenvector from a noisy positive semidefinite matrix. When the true principal eigenvector is assumed to belong to a convex cone, the proposed algorithm is fast and has a tractable error.
Specifically, the method achieves polynomial time complexity for certain convex cones equipped with fast projection such as the monotone cone. It attains a small error when the noisy matrix has a small cone-restricted operator norm. We supplement the above results with a minimax lower bound of the error under the spiked covariance model. Our numerical experiments on simulated and real data, show that our method achieves shorter run time and smaller error in comparison to the ordinary power iteration and some sparse principal component analysis algorithms if the principal eigenvector is in a convex cone.
\end{abstract}

\small\textbf{\textit{Keywords---}} Power iteration; Dimension reduction; Principal Component Analysis; Convex cone; Monotone cone.

%%%%%%%%%%%%%
% 1. Intro
%%%%%%%%%%%%%
\section{Introduction}
\label{intro:section}
Principal component analysis was developed by  \citet{hotelling1933analysis} after its origin by \citet{pearson1901liii}, and is widely used nowadays for dimension reduction. It works by replacing a set of $p$ variables with a smaller set of principal components which capture the maximal variance. A principal component is a linear combination of the $p$ covariates. The coefficients of such a linear combination depend on the principal eigenvectors of the population covariance matrix, which is often estimated by eigenvectors of the sample covariance matrix.
Numerical methods to compute principal eigenvectors include the QR algorithm and power iteration \citep[Chapter~5]{watkins2004fundamentals}. 

%% Performance of PCA in low and high dim
The consistency of principal component analysis is thoroughly studied in the statistical literature. See for example \citet{paul2007asymptotics,nadler2008finite,bai2010spectral,fan2015asymptotics}. 
In low dimensions $(p \ll n)$, the sample estimators consistently recover the population principal eigenvectors \citep{anderson2003introduction}.
However in high dimensions $(p \gg n)$, all methods fail to recover the population principal eigenvectors if there is no additional structure imposed. \citet{johnstone2009consistency} introduced the spiked covariance model and proved that the sample estimator would not be consistent if $p/n$ is bounded away from zero. \citet[Example~15.19]{wainwright2019high} shows that the minimax risk for estimating the spiked eigenvector is lower bounded by a quantity related to $p/n$. 

%% Our paper
In this paper, we study the problem of estimating the principal eigenvector of a positive semi-definite matrix in high dimensions, given that this vector belongs to a known convex cone. %we propose a cone projected power iteration algorithm to estimate the first principal eigenvector of a positive semi-definite matrix in high dimensions. Our algorithm produces a consistentent estimation when the true first eigenvecor is in a convex cone.
% , but not necessarily sparse. 
We view this setup as an alternative to the commonly used sparse PCA. Of course there are many ready-to-use methods available for sparse PCA, but we recognize that the first principal eigenvector might not always be sparse. The idea of considering general convex cone constraints is inspired by the fact that the set of $s$-sparse vectors forms a cone (albeit a non-convex one). We stick with the convex cone constraint for two reasons: there are adequate results in convex analysis literature to support the analysis of our algorithm, and the set of convex cones is general enough to cover some useful cases. One example is the non-negative orthant cone, which arises in neural signal processing \citep{pavlov2007sorting, quiroga2009extracting}, computer vision \citep{lee1999learning, arora2016computing}, and gene expression \citep{lazzeroni2002plaid}. Another example would be the monotone cone. In time series forecasting, where often times more recent observations are more important, imposing monotonicity constraints makes intuitive sense. In addition due to the nature of the monotone cone, vectors can be estimated with high precision even in very high-dimensional settings.

On the algorithmic side, we propose a cone projected power iteration to estimate the first principal eigenvector. This is inspired by the work of \citet{yuan2013truncated}, where the authors proposed a truncated power iteration to find the principal eigenvector assuming it is sparse. The truncated power iteration, may be effectively thought of as a projected power iteration over the cone formed by $s$-sparse vectors. It is thus natural to investigate whether the same approach carries over to the distinct setting of a convex cone constraint. A bulk of the effort in our work is dedicated to making this precise. On the analytical side, we derive the time complexity of the proposed cone projected power iteration algorithm, and provide an upper bound and a lower bound of its estimation error. Some numerical experiments are implemented to address the usefulness of the cone projected power iteration algorithm.
% Experiments
% {\color{red} not sure if the paragraph below should be here, it is in no way related to anything we discuss before}
% We compare our proposed cone projected power iteration algorithm with the ordinary power iteration, Truncated Power Iteration \citep{yuan2013truncated}, and ElasticNet SPCA \citep{zou2006sparse} on both simulated and real data sets. According to the results, our algorithm performs better than sparse methods when the principal eigenvector is not sparse. It's also worthwhile noticing that, as long as the principal eigenvector is in a convex cone, sometimes our algorithm is able to beat sparse methods even though the principal eigenvector is sparse.

\subsection{Related Work}
 %% SPCA to solve high dim problem
With the failure of ordinary principal component analysis in high dimensions, researchers have started to impose additional structure on the eigenvectors. Such a structure is exploited by introducing constraints or penalties.
The simplest and most studied structure is that of sparsity. Methods exploiting the sparsity structure are referred to as sparse principal component analysis. 
% The idea of SPCA stemmed from approximating a given principal eigenvector by fewer than $p$ variables. 
As to the origin, \citet{cadima1995loading} first proposed the idea to approximate a given principal component by using only a subset of features.
%%%%%%%%%
%%%%%%%%% The methods relaxing constraints
The first computational technique --SCoTLASS-- was established by \citet{jolliffe2003modified}, which maximizes the variance of a principal component under the $\ell_1$ constraint of eigenvector, inspired by LASSO \citep{tibshirani1996regression}. 
Afterwards, \citet{zou2006sparse} proposed ElasticNet SPCA to regress a principal component on $p$ variables with elastic net constraint \citep{zou2005regularization} to get a sparse eigenvector. \citet{witten2009penalized} established the connections between SCoTLASS and ElasticNet SPCA. In the seminal work of \citet{d2005direct}, the computation of the first sparse eigenvector is formed as approximating the covariance matrix by a rank-one spike matrix under Frobenius norm with constrains on the spike vector. Then it is relaxed to a semidefinite programming problem.
\citet{vu2013fantope} generalized the work of \citet{d2005direct} to compute more than one sparse eigenvectors by incorporating Fantope in the constraint function.
%%%%%%%%%%
%%%%%%%%%% Methods which don't relax l_0 constraint
Besides methods based on the relaxation of the sparsity constraint, there is also a substantial literature on non-relaxed optimization techniques. \citet{moghaddam2006spectral} computed the non-zero elements of a sparse eigenvector by solving an unconstrained optimization on corresponding submatrices. The submatrices is selected by bi-directional greedy search.
\citet{johnstone2009consistency} proposed a method which uses the coordinates with highest variance after wavelet transforms of the data. Years later, some new numerical methods came up, such as the work of \citet{yuan2013truncated} which integrated the power iteration algorithm with truncation in each iteration, and \citet{ma2013sparse} which incorporated the QR algorithm with a thresholding step. 
Theoretical analysis of the convergence rate of sparse principal analysis can be found in \citet{birnbaum2013minimax,cai2013sparse,vu2013minimax}.

Despite the pervasive study of sparse constraints in principal component analysis, the study of methods with other constraints are scarce in the literature. Examples of non-sparse constraints include a subspace constraint \citep{de2004learning} and a non-negative orthant cone constraint \citep{montanari2015non}. In addition to the fact that those two types of constraints are special cases of convex cones, our work is quite different from theirs: \citet{montanari2015non} analyze the error rate of the non-negative approximate message-passing algorithm under the spiked covariance model, and \citet{de2004learning} proposes a subspace constrained spectral clustering algorithm without providing error analysis. A recent work from \citet{cai2020optimal} gives statistical analysis of constrained PCA under the matrix denoising model and the spiked Wishart model under very general constraints. In the present paper we use a different loss function from theirs, and we do not assume any statistical model on the data when deriving the upper bound of estimation error. We also propose a practical iterative algorithm, whereas \citet{cai2020optimal} rely on a constrained optimization formulation which may not be easily implementable. Moreover, in order to apply the general lower bound of \citet{cai2020optimal}, one needs to construct local packing sets at different resolutions manually and solve an identity equating the resolution of the packing to the square root of the log of its cardinality. This is quite distinct from our lower bounds, which are also developed under the spiked Wishart model, since we provide a universal lower bound on the packing set we pick (which is different from that of \citet{cai2020optimal}). Furthermore, our lower bound cannot be derived from that of \citet{cai2020optimal} as our loss function is smaller than the one they consider. %While we've constructed a packing set universal over all convex cones, nevertheless it might not be the optimal packing set for some of them.

In this work we consider the same type of constraint as the work of \citet{deshpande2014cone} (i.e., convex cone constraints), but our setting is different from theirs.
\citet{deshpande2014cone} consider a spiked Gaussian Wigner model $A = \nu\bx\bx\T + Z$, where $Z$ is a symmetric Gaussian noise matrix \citep[Definition 3.2]{perry2018optimality}. In contrast, in this paper we estimate the principal eigenvector $\bx$ of any population covariance matrix $\bar{A}$ from a positive semidefinite observation $A = \bar{A}+E$, where $E$ is a mean-zero noise matrix. Our main results are not tied to any distributional assumptions on the error matrix, i.e., we provide deterministic inequalities which control the estimation error (in an $L_2$ sense) in terms of certain cone-restricted norms of the error matrix. We then use those general results to analyze the spiked covariance model where the upper and lower bounds we prove nearly match in some special cases of the convex cone constraint, while the work of \citet{deshpande2014cone} did not provide clear connections between the upper and lower bounds. Notice also that spiked Gaussian Wigner model in \citet{deshpande2014cone} clearly differs from the spiked covariance model. Besides the big difference in the settings, we will draw some further comparisons between our results and that of \citet{deshpande2014cone}: (i) our algorithm is cleaner and does not require a tunable hyper-parameter, (ii) our algorithm provably has a finite run-time guarantee, (iii) our upper bounds are strictly sharper than the ones exhibited by \citet{deshpande2014cone}, (iv) our main lower bound result is markedly different from the lower bound result of \citet{deshpande2014cone} which lower bounds the minimax risk without an explicit dependence on the signal strength.

\subsection{Organization}
The remaining part of the paper is structured as follows. Section \ref{preliminary:sec} provides some common notations, sets up problem formally and gives a result on the spiked covariance model with conic constraints. Section \ref{ideal:section} discusses why cone constrained eigenvector estimation is hard to solve in general, and provides error rate guarantees for the so-called idealized estimator. Section \ref{estimation:section} is dedicated to introducing the cone projected power iteration algorithm, which also includes the study of its time complexity, convergence, and upper bound of the error rate. Section \ref{bound:section} provides a lower bound over the convex cone constrained principal eigenvector estimation problem. We compare the obtained upper and lower bound under the spiked covariance model. Some example convex cones are studied to derive more informative upper and lower bound. Section \ref{experiment:section} presents both simulated and real data experiments to support our theoretical findings. Finally, a brief discussion is provided in Section \ref{discussion:sec}.

%%%%%%%%%%%%%%%
%%% Notations
%%%%%%%%%%%%
\section{Notation and Preliminary} \label{preliminary:sec}

In this section we formalize the problem, we outline some notation and provide a result on the spiked covariance model with conic constraints.

%%%%%%%%%%%%% Problem Formulation
\subsection{Problem Formulation}
\label{formulation:section}
In this paper we focus on the following concrete problem. Suppose $\bar{A}$ is a $p\times p$ positive semidefinite matrix with a first principal eigenvector $\bar{x}\in K$ where $K\subset\mathbb{R}^p$ is a known convex cone. Then $\bx$ is the solution of
\begin{align} \label{pca_formula}
         \argmax_{u \in K \bigcap \mathbb{S}^{p-1}} u\T  \bar{A} u.
\end{align}
where $\mathbb{S}^{p-1}$ is the unit sphere in $\mathbb{R}^p$. Instead of observing $\bar{A}$ we assume we get to observe a noisy matrix $A=\bar{A}+E$, where $E$ is the stochastic noise. The problem of interest becomes to recover $\bar{x}$ from a noisy observation $A$, with prior knowledge that $\bar{x}\in K$ and assuming that the noisy matrix $A$ is positive semidefinite. For example, $A$ could be the sample covariance matrix of a data set, and $\bar{A}$ could be its population covariance matrix. 

Let $\hat{v}$ be the estimated principal eigenvector. The loss function we use is $\|\hat{v}-\bx\|_2\wedge \|\hat{v}+\bx\|_2$, which is sign invariant and aligns with the signless nature of an eigenvector. In several places we will also use $\|\hat{v}-\bx\|_2$ if it is assumed or guaranteed that $\hat{v}\T\bx \geq 0$. In passing, we also remark that there are other popular loss functions studied in PCA literature such as the square of the sine of the angle $1-(\hat{v}\T\bx)^2$, or the projector distance $\|\hat{v}\hat{v}\T- \bx\bx\T\|_F$. It's not hard to show that $\|\hat{v}\hat{v}\T- \bx\bx\T\|_F^2=2-2(\hat{v}\T\bx)^2$, and both of these loss functions are in general strictly larger than $\|\hat{v}-\bx\|^2_2\wedge \|\hat{v}+\bx\|^2_2 = 1 - |\hat{v}\T\bx|$. However, it is also clear that for values $|\hat{v}\T\bx|$ close to $1$, the loss functions are of the same order. The latter observation implies that any small enough upper bound on our loss function implies a corresponding upper bound on the sine of the angle and the projector distance loss functions.
% It is easy to see that both of these loss functions are in general strictly larger than $\|\hat{v}-\bx\|^2_2\wedge \|\hat{v}+\bx\|^2_2 = 1 - |\hat{v}\T\bx|$, but are in fact equivalent when $|\hat{v}\T\bx|$ takes a reasonably large value.

\subsection{Notation}
%%%%%%%%%%%%% Notations
We now outline some commonly used notation. Let $\lambda, \,\mu$ denote the largest and second largest eigenvalues of $\bar{A}$ and let $\nu:= \lambda - \mu$ be the first eigengap of $\bar{A}$. %Below we introduce some general definitions and constants which will be useful for the later development. 

% Cone-constrained operator norm
Given a cone $C\subset\mathbb{R}^p$, a cone-restricted operator norm of a $p\times p$ matrix $E$ is defined as 
\begin{align*}
    \|E\|_C &= \sup\limits_{x,y\in C \bigcap \mathbb{S}^{p-1}}|x\T Ey|,
\end{align*}
Notice that $\|E\|_C$ constitutes a seminorm, and one trivially has $\|E\|_C \leq \|E\|_{op}$ where $\|E\|_{op}$ is the operator norm of $E$. For completeness we mention that other forms of cone-restricted norms have also appeared in conic optimization literature \citep[Section 1.2]{amelunxen2014gordon}.  

% Gaussian Complexity 
For a set $T\subset\mathbb{R}^p$, define the Gaussian complexity of $T$ as 
\begin{align*}
    w(T)& = \mathbb{E}\sup\limits_{t\in T}\,\,\langle g,t\rangle, \quad\text{where}\,\,g \sim \mathcal{N}(0,I_p),
\end{align*}
which is the expectation of maximum magnitude of the canonical Gaussian process on $T$. The Gaussian complexity of set $T$ can be upper bounded in terms of the metric entropy of $T$ via Dudley's integral inequality \citep[Theorem 8.1.10]{vershynin2018high}. Gaussian complexity is a basic geometric quantity measuring the size of the set similar to volume or diameter. It has several nice properties such as invariance under affine transformations, respecting Minkowski sums and scalar multiplication, and is related to the value of the diameter of the set $T$ via (dimension dependent) upper and lower bounds \citep[see Proposition 7.5.2]{vershynin2018high}.

% Tangent Cone
The tangent cone of a convex cone $K\subset\mathbb{R}^p$ at $\bx\in K$ consists of all the possible directions from which a sequence in $K$ can converge to $\bx$. It is defined as 
\begin{align*}
    \mathcal{T}_K(\bx) & = \{ t(v-\bx)\,\,:\, t\geq 0, v\in K\}. 
\end{align*}

% Projection to convex cone
Throughout we use $\|\cdot\|$ as a shorthand for the Euclidean norm $\|\cdot\|_2$. The projection of a vector $v \in \mathbb{R}^p$ onto a convex cone $K\subset\mathbb{R}^p$ is defined as 
\begin{align*}
    \Pi_K v &= \argmin\limits_{x\in K} \|v-x\|.
\end{align*}

% Quantities used in proof 
We use $\lesssim$ and $\gtrsim$ to mean $\leq$ and $\geq$ up to positive universal constants. Next we define two constants $c_{-1}$ and $c_1$ which will be used in the statements of our results in the consequent sections. For precise definitions in terms of the eigengap $\nu$ and $\|E\|_K$ please refer to Appendix \ref{app:theorem}. Here we only mention that $c_{-1}$ and $c_1$ are well defined when the eigengap $\nu \gtrsim \|E\|_K$ and that $\frac{\|E\|_K}{\lambda} \leq c_{-1} \lesssim \frac{\|E\|_K}{\nu}$ and $c_1 \geq \frac{2}{5}$.
We use $\wedge $ and $\vee$ as a shorthand for the $\min$ and $\max$ of two numbers respectively. 
Finally we will sometimes use the convenient shorthand $[n] = \{1,2,\ldots, n\}$ for an integer $n \in \mathbb{N}$.

%%%%%%%%
\subsection{The Spiked Covariance Model}
In this subsection we introduce the spiked covariance model and present a result controlling the expectation of the cone-restricted operator norm of its error matrix. This will be useful later on to specify our general results to the spiked covariance model. 

Let us observe $n$ i.i.d. samples from the model $X_i \sim \mathcal{N}(0, I + \nu \bar{x}\bar{x}\T)$, where $\bar{x}\in \mathbb{S}^{p-1}$ and $i = 1,2,\ldots,n$.  This model is known as the spiked covariance model, and has been intensely studied in the literature. It was first considered by \citet{johnstone2009consistency}. The sample covariance matrix is constructed as $A = n^{-1}\sum_{i = 1}^n X_i X_i\T$, and the population covariance matrix is $\bar A = I + \nu\bx\bx \T$. Clearly then, the noise matrix equals to $E = A - \bar{A}$. The parameter $\nu$, which is the first eigengap of the population covariance matrix $\bar A$, can be understood as signal strength in the model. If one assumes that $\bar{x} \in K$, the spiked covariance model is an ideal toy model to analyze and compare the performances of competing estimators. For this reasons we will come back to it repeatedly throughout our paper. 

Lemma \ref{tan_gw} below shows an upper bound on $\E \|E\|_K$ in terms of the Gaussian complexity $w(K\bigcap\mathbb{S}^{p-1})$ under the spiked covariance model. The proof of Lemma \ref{tan_gw} is established by applying a powerful empirical process upper bound \citep[Theorem A]{mendelson2010empirical} which upper bounds the expectation of the supremum of the quadratic process in terms of Talagrand's $\gamma_2$ function and an Orlicz norm. The result of \citet[Lemma 3.2.3]{vu2012minimax} is similar to Lemma \ref{tan_gw}, but in the sparse constraint setting. The bound in Lemma \ref{tan_gw} is also tighter than that of \citet[Lemma 3.2.3]{vu2012minimax} in terms of the signal strength $\nu$, since if one uses the approach of \citet[Lemma 3.2.3]{vu2012minimax} in our setting one obtains a rate $\mathcal{O}\big((\nu+1)[
        \frac{w(K\bigcap\mathbb{S}^{p-1})}{\sqrt{n}} \vee 
        \frac{w^2(K\bigcap\mathbb{S}^{p-1})}{n}]\big)$ which is sub-optimal.
\begin{lemma}[Upper Bound on $\|E\|_K$ under Spiked Covariance Model]
\label{tan_gw}
Suppose we have $n$ i.i.d. observations from the spiked covariance model $X_i \sim \mathcal{N}(0, I + \nu \bar{x}\bar{x}\T)$, where $\bar{x}\in \mathbb{S}^{p-1}$. Let $A = n^{-1}\sum_{i = 1}^n X_i X_i\T$ be the sample covariance matrix, $\bar A = I + \nu\bx\bx \T$ be the population covariance matrix, and $E = A - \bar{A}$ be the noise matrix. For any convex cone $K\subset \mathbb{R}^p$ we have
\begin{align*}
    \mathbb{E}\omek \lesssim 
        \sqrt{\nu+1}\bigg[ \frac{w(K\bigcap\mathbb{S}^{p-1})}{\sqrt{n}} \vee \frac{w^2(K\bigcap\mathbb{S}^{p-1})}{n} \bigg] + \frac{\nu+3-3\sqrt{\nu+1}}{\sqrt{n}}
\end{align*}
\end{lemma}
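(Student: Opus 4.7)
My approach will be to reduce $\omek$ to a scalar quadratic empirical process and then invoke Mendelson's inequality. Since $E$ is symmetric and $K$ is a convex cone (so $x+y \in K$ whenever $x, y \in K$), the polarization identity $2 x\T E y = (x+y)\T E (x+y) - x\T E x - y\T E y$ together with $\|x+y\|^2 \leq 4$ gives
\[
    \omek \;\leq\; 3 \sup_{u \in K \cap \bbs^{p-1}} |u\T E u|,
\]
reducing the two-variable supremum to a one-variable one.

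Next, write $X_i = (I + \nu \bx\bx\T)^{1/2} Z_i$ with $Z_i \sim \mathcal{N}(0, I_p)$ i.i.d., and set $w_u := (I + \nu \bx\bx\T)^{1/2} u$. Then
\[
    u\T E u \;=\; \frac{1}{n}\sum_{i=1}^n \langle w_u, Z_i\rangle^2 - \|w_u\|^2
\]
is a centered empirical quadratic process over the function class $F = \{f_u(z) = \langle w_u, z\rangle : u \in K \cap \bbs^{p-1}\}$. Applying Mendelson's Theorem A yields
\[
    \E \sup_{u \in K \cap \bbs^{p-1}} |u\T E u| \;\lesssim\; \frac{D_\psi(F)\,\gamma_2(F, \psi_2)}{\sqrt n} + \frac{\gamma_2^2(F, \psi_2)}{n},
\]
where $D_\psi(F) \asymp \sup_u \|w_u\|_2 = \sqrt{\nu+1}$ (attained at $u = \bx$) and $\gamma_2(F, \psi_2)$ is Talagrand's majorizing measure functional in the $\psi_2$-metric.

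The core calculation is the bound on $\gamma_2$. By the majorizing measure theorem for Gaussian processes, $\gamma_2(F, \psi_2) \asymp \E \sup_u \langle w_u, g\rangle$ for $g \sim \mathcal{N}(0, I_p)$. Expanding $(I + \nu \bx\bx\T)^{1/2} g = g + a (\bx\T g) \bx$ with $a := \sqrt{\nu+1} - 1$ and splitting the supremum of the sum, together with $|u\T \bx| \leq 1$ and $\E|\bx\T g| = \sqrt{2/\pi}$, gives
\[
    \gamma_2(F, \psi_2) \;\lesssim\; w(K \cap \bbs^{p-1}) + a.
\]

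Plugging these into Mendelson and using $(1 + a)^2 = \nu + 1$, the leading piece collapses to $\sqrt{\nu+1}\bigl[w(K \cap \bbs^{p-1})/\sqrt n \vee w^2(K \cap \bbs^{p-1})/n\bigr]$, matching the first term of the lemma, while the residual collects the $\sqrt{\nu+1}\cdot a/\sqrt n$ and $a^2/n$ contributions. To land on the sharper stated residual $(\nu+3-3\sqrt{\nu+1})/\sqrt n = a(a-1)/\sqrt n$ (which is notably negative for $\nu \in (0,3)$) rather than the looser $a(a+1)/\sqrt n$ that a direct application of the bounds above produces, one likely needs a finer three-term decomposition $E = E_1 + E_2 + E_3$, separating the Wishart part $n^{-1}\sum (Z_i Z_i\T - I)$, the rank-two mixed part carrying factors of $a$, and the rank-one part $a^2 R \bx \bx\T$ with $R := n^{-1}\sum (\bx\T Z_i)^2 - 1$. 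Bounding each piece by Mendelson applied to the class of linear functionals on $K \cap \bbs^{p-1}$ and exploiting a cancellation between the $R$-contributions inside $E_2$ and $E_3$ yields the sharper residual. Tracking this cancellation cleanly is the main obstacle; everything else reduces to standard empirical process machinery.
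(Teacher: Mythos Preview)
Your approach is correct and uses the same toolkit as the paper---polarization to collapse the bilinear supremum to a quadratic one, Mendelson's Theorem~A for the empirical quadratic process, and the majorizing measure theorem to identify $\gamma_2$ with a Gaussian width. The difference is in the \emph{order} of operations, and this is exactly what makes the residual clean.

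The paper factors the covariance square root out \emph{before} polarizing: writing $E = (I+\nu_0\bx\bx\T)\bigl(n^{-1}\tilde X\T\tilde X - I\bigr)(I+\nu_0\bx\bx\T)$ with $\nu_0=\sqrt{\nu+1}-1$ and $\tilde X$ a standard Gaussian matrix, then expanding $x\T E y$ into four pieces. Polarization is applied to each piece separately, so Mendelson is invoked only on the \emph{isotropic} class $\{u\mapsto\langle u,\tilde X_i\rangle: u\in K\cap\bbs^{p-1}\}$, where $D_{\psi_1}$ and $\gamma_2$ carry no $\nu$-dependence at all. All the $\nu$-dependence sits in the explicit coefficients $(5\nu_0+3)$ in front of the Mendelson term and $(\nu_0^2+\nu_0)$ in front of the rank-one piece $|\bx\T(n^{-1}\tilde X\T\tilde X - I)\bx|$, which is bounded directly by $\chi^2_1$ concentration as $\lesssim 1/\sqrt n$. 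No cancellation is needed.

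By contrast, you polarize first and then apply Mendelson to the \emph{stretched} class $\{w_u\}$, which forces the $\nu$-dependence through both $D_{\psi}$ and $\gamma_2$ and produces the cross-terms you are trying to untangle. Your speculated three-term decomposition is essentially what the paper does, but at the level of the bilinear form rather than after the fact.

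One final remark: the paper's own computation actually produces residual coefficient $\nu_0^2+\nu_0 = a(a+1) = \nu+1-\sqrt{\nu+1}$, not $a(a-1)=\nu+3-3\sqrt{\nu+1}$; the latter appears to be an arithmetic slip in the final display of the lemma. Since the bound is stated with $\lesssim$ and both expressions are $O(\nu)$ for large $\nu$, this is harmless---but you should not spend effort hunting for a cancellation that the paper itself does not achieve.
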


In the next section we formalize and analyze the ``idealized'' estimator for cone constrained eigenvector estimation. While being difficult to compute in some cases, this estimator is a very intuitive way to estimate the principal eigenvector $\bar{x}$. We will use Lemma \ref{tan_gw} in order to illustrate the performance of the idealized estimator on the spiked covariance model.

%%%%%%%%
% 2. The Idealized Estimator
%%%%%%%%
\section{The Idealized Estimator}
\label{ideal:section}
We start with an intuitive expression of the constrained eigenvector estimation problem, and discuss why it is hard to solve. There exists a natural estimator to problem (\ref{pca_formula}) which simply plugs in the observed matrix $A$ instead of the target matrix $\bar A$. In particular consider estimating $\bar{x}$ with 
\begin{align}\label{idealized:estimator}
    v \in \argmax_{u \in K \bigcap \mathbb{S}^{p-1}} u\T  A u,
\end{align} 
where $A$ is the observed noisy matrix. We refer to an estimator $v$ as an idealized estimator since the above program is non-convex and could be NP-hard to solve. For example, if $K$ is the non-negative orthant $\{\mathbf{v}\in\mathbb{R}^p: 0 \leq v_j,\,\forall j\in [p]\}$, then solving \eqref{idealized:estimator} reduces to a copositive program by rewriting the quadratic form to a trace function \citep[Exercise 7.4]{gartner2012approximation}. Copositive programming is NP-hard since the maximum clique problem is equivalent with a copositive program \citep{dur2010copositive}. Of course, in some cases the idealized estimator is tractable: if $K$ is an $s$-dimensional subspace, the problem $\argmax_{u \in K \bigcap \mathbb{S}^{p-1}} u\T Au$ reduces to an unconstrained eigenvector estimation in a lower dimension $s$, which can be solved in polynomial time \citep{garber2015fast}.

Even though the idealized estimator might be impractical to compute, we are still able to provide a bond on its $L_2$ error (see Theorem \ref{cvx_est} below). Notice that the idealized estimator is not generally guaranteed to have a positive dot product with the true eigenvector of $\bar{A}$, so we consider both situations: $v\T  \bar{x} \geq 0$ and $v\T \bar{x} \leq 0$.
\begin{theorem}[$L_2$ Error Rate of the Idealized Estimator]
\label{cvx_est}
For any $v$ as a solution in \eqref{idealized:estimator}, we either have 
\begin{align*}
    \| v-\bar{x}\| &\leq \sqrt{\frac{4\omek}{\nu}}\wedge\frac{8\omtan}{\nu}\quad\text{for $\,\,v\T\bar{x} \geq 0$, or}\\
    \| v+\bar{x}\| &\leq \sqrt{\frac{4\omek}{\nu}}\wedge\frac{8\omek}{\nu}\quad\text{for}\,\,v\T\bar{x} \leq 0.
\end{align*}
\end{theorem}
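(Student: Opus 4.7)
The plan is to turn the optimality property of the idealized estimator into a bound on $1 - (v\T\bx)^2$, and then split into the two sign cases of $v\T\bx$ to read off bounds on $\|v - \bx\|$ or $\|v + \bx\|$. Since $\bx \in K \cap \mathbb{S}^{p-1}$ is feasible, the definition of $v$ gives $v\T A v \geq \bx\T A \bx$. Substituting $A = \bar{A} + E$, decomposing $v$ along $\bx$ and its orthogonal complement to obtain $v\T \bar A v \leq \lambda(v\T\bx)^2 + \mu(1 - (v\T\bx)^2)$, and using $\bx\T \bar{A}\bx = \lambda$, I reach the master inequality
\begin{align*}
\nu\bigl(1 - (v\T\bx)^2\bigr) \leq v\T E v - \bx\T E \bx.
\end{align*}

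From here I would extract the two branches of the claimed bound. For the $\sqrt{4\omek/\nu}$ branch, since $v, \bx \in K \cap \mathbb{S}^{p-1}$ each of $v\T E v$ and $\bx\T E\bx$ lies in $[-\omek, \omek]$, so the right-hand side is at most $2\omek$, i.e.\ $(1 - v\T\bx)(1 + v\T\bx) \leq 2\omek/\nu$. In each sign case the larger of the two factors is at least $1$, which gives $\|v - \bx\|^2 \leq 4\omek/\nu$ when $v\T\bx \geq 0$ and $\|v + \bx\|^2 \leq 4\omek/\nu$ when $v\T\bx \leq 0$.

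For the sharper linear branch I would use the symmetry of $E$ to rewrite $v\T E v - \bx\T E\bx = (v - \bx)\T E (v + \bx)$. When $v\T\bx \geq 0$, both $v - \bx$ and $v + \bx$ lie in $\mathcal{T}_K(\bx)$: the first from the defining formula with $t = 1$, and the second because $2\bx \in \mathcal{T}_K(\bx)$ (take $w = 3\bx$, $t = 1$) and the tangent cone is a convex cone, hence closed under addition. Thus $(v - \bx)\T E (v + \bx) \leq \omtan\|v - \bx\|\,\|v + \bx\| = 2\omtan\sqrt{1 - (v\T\bx)^2}$, and combining with the master inequality gives $\sqrt{1 - (v\T\bx)^2} \leq 2\omtan/\nu$, which after the same factorization trick yields $\|v - \bx\| \leq 8\omtan/\nu$ with room to spare in the constant. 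When $v\T\bx \leq 0$, $v - \bx$ need not lie in $K$, so I peel off only one factor: setting $u := (v + \bx)/\|v + \bx\| \in K \cap \mathbb{S}^{p-1}$ (using that $K$ is a convex cone), the bounds $|u\T E v| \leq \omek$ and $|u\T E \bx| \leq \omek$ give $(v - \bx)\T E(v + \bx) \leq 2\omek\|v + \bx\|$. Plugging this into the master inequality and using $1 - v\T\bx \geq 1$ together with $\|v + \bx\|^2 = 2(1 + v\T\bx)$ produces $\|v + \bx\| \leq 8\omek/\nu$.

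The routine pieces are the optimality step and the eigengap decomposition; the real content lies in the linear branch. The main subtlety will be verifying that $v \pm \bx$ live in $\mathcal{T}_K(\bx)$, and the asymmetric handling of the $v\T\bx \leq 0$ case, where the product structure forces one to control the inner product only via $\omek$ rather than $\omtan$. A minor technical point is the degenerate case $v + \bx = 0$, which only occurs when $v\T\bx = -1$ and is immediate since then the bound $\|v + \bx\| = 0$ is already vacuous.
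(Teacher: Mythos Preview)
Your proof is correct and follows the same overall strategy as the paper: use optimality of $v$ together with the eigengap decomposition to obtain the master inequality $\nu(1-(v\T\bx)^2) \leq v\T E v - \bx\T E\bx$, then bound the right-hand side in two ways. The only difference is in the linear branch: the paper expands $v\T E v - \bx\T E\bx = (v-\bx)\T E(v-\bx) + 2\bx\T E(v-\bx)$ and bounds each piece by $\omtan$ (yielding the factor $4\|v-\bx\|$), whereas you factor it symmetrically as $(v-\bx)\T E(v+\bx)$ and observe that both $v-\bx$ and $v+\bx$ lie in $\mathcal{T}_K(\bx)$. Your route is slightly cleaner and in fact gives sharper constants (you end up with $\|v-\bx\| \leq 2\sqrt{2}\,\omtan/\nu$ and $\|v+\bx\| \leq 4\omek/\nu$), but the underlying idea---exploiting $v-\bx \in \mathcal{T}_K(\bx)$---is the same.
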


It is straightforward to see that the upper bounds in Theorem \ref{cvx_est} are naturally related to the operator norm of the noise matrix $E$. For completeness we state this in the following Corollary \ref{cvx_est_coro}.
\begin{corollary}[$L_2$ Error Rate of the Idealized Estimator] For the estimate defined in Theorem \ref{cvx_est} we have
\label{cvx_est_coro}
\begin{align*}
    \| v-\bx\| \wedge  \| v+\bx\| \leq 
    \sqrt{\frac{4\|E\|_{op}}{\nu}}
    \wedge 
    \frac{8\|E\|_{op}}{\nu}.
\end{align*}
\end{corollary}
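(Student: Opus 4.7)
The plan is to deduce this corollary as an essentially mechanical consequence of Theorem \ref{cvx_est}, by replacing each cone-restricted operator norm with the full operator norm and handling the two sign cases for $v\T\bx$.

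The key observation is the elementary fact that for any cone $C \subset \mathbb{R}^p$,
\begin{align*}
    \|E\|_C = \sup_{x,y \in C \cap \mathbb{S}^{p-1}} |x\T E y| \leq \sup_{x,y \in \mathbb{S}^{p-1}} |x\T E y| = \|E\|_{op},
\end{align*}
because the defining supremum is taken over a subset of the unit sphere. In particular this applies with $C = K$ and with $C = \mathcal{T}_K(\bx)$, so both $\|E\|_K \leq \|E\|_{op}$ and $\|E\|_{\mathcal{T}_K(\bx)} \leq \|E\|_{op}$.

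Next I would split on the sign of $v\T\bx$. Since $\|v \pm \bx\|^2 = 2 \pm 2 v\T\bx$, we have $\|v-\bx\| \wedge \|v+\bx\| = \|v - \bx\|$ whenever $v\T\bx \geq 0$, and $\|v - \bx\| \wedge \|v + \bx\| = \|v + \bx\|$ whenever $v\T\bx \leq 0$. In the first case the first bound of Theorem \ref{cvx_est} together with $\|E\|_K, \|E\|_{\mathcal{T}_K(\bx)} \leq \|E\|_{op}$ gives $\|v - \bx\| \leq \sqrt{4\|E\|_{op}/\nu} \wedge (8\|E\|_{op}/\nu)$; in the second case the second bound of Theorem \ref{cvx_est} together with $\|E\|_K \leq \|E\|_{op}$ gives the same bound for $\|v + \bx\|$. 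Combining the two cases yields the corollary.

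There is no substantive obstacle — the only nontrivial input is Theorem \ref{cvx_est}, and the remaining work is the one-line domination of cone-restricted operator norms by the unrestricted operator norm plus the sign bookkeeping described above.
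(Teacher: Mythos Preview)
Your proposal is correct and follows essentially the same approach as the paper: the paper's proof simply notes that it suffices to show $\|E\|_{\mathcal{T}_K(\bx)} \vee \|E\|_K \leq \|E\|_{op}$, which is immediate from the definitions. Your added sign bookkeeping makes explicit what the paper leaves implicit, but the core idea is identical.
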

One should keep in mind however, that the conclusion of Theorem \ref{cvx_est} could be much tighter than Corollary \ref{cvx_est_coro} when $\mathcal{T}_K(\bx)$ and/or $K$ is much smaller than $\mathbb{R}^p$. 

% \begin{remark}
We will now combine the results of Theorem \ref{cvx_est} and Lemma \ref{algo1_error_spiked} in order to provide a guarantee for the performance of the idealized estimator in the spiked covariance model. We start by taking expectation and using Jensen's inequality to obtain
\begin{align*}
    \mathbb{E}\| v-\bar{x}\| \leq \sqrt{\frac{4\mathbb{E}\omek}{\nu}}\wedge\frac{8\mathbb{E}\omtan}{\nu}\quad\text{for $\,\,v\T\bar{x} \geq 0$}.
\end{align*}
The expectations $\mathbb{E}\omek$ and $\mathbb{E}\omtan$ can be bounded in terms of Gaussian complexity $w(K\bigcap\mathbb{S}^{p-1})$ and $w(\mathcal{T}_K(\bx)\bigcap\mathbb{S}^{p-1})$ using Lemma \ref{tan_gw}. For simplicity of exposition suppose that the convex cone $K$ satisfies $w(K\bigcap\mathbb{S}^{p-1})<\sqrt{n}$ and $w(\mathcal{T}_K(\bx)\bigcap\mathbb{S}^{p-1})<\sqrt{n}$. This is often times a reasonable assumption provided that $p$ is not too large. For example a monotone cone $K$ satisfies $w(K\bigcap\mathbb{S}^{p-1})\approx\sqrt{\log p}$ \citep[Section~D.4]{amelunxen2014living}, and $w(\mathcal{T}_{K}(\bx)\bigcap\mathbb{S}^{p-1}) \approx \sqrt{m\log \frac{ep}{m}}$ where $m$ is the number of constant pieces in $\bx$ \citep[Proposition~3.1]{bellec2018sharp}. 
We also assume that the eigengap $\nu$ doesn't scale with $n$. 
Then the above upper bound reduces to 
\begin{align}
\label{ideal_upper1}
    \mathbb{E}\|v-\bar{x}\|
     \lesssim \sqrt{\frac{w(K\bigcap\mathbb{S}^{p-1})}{(\nu\wedge\sqrt{\nu})\sqrt{n}}} \wedge \frac{w(\mathcal{T}_K(\bx)\bigcap\mathbb{S}^{p-1})}{(\nu\wedge\sqrt{\nu})\sqrt{n}}
     \quad\text{for $\,\,v\T\bar{x} \geq 0$}
\end{align}

Similarly, under the same assumptions one can show that
\begin{align}
\label{ideal_upper2}
     \mathbb{E}\|v+\bar{x}\|
     \lesssim \frac{w(K\bigcap\mathbb{S}^{p-1})}{(\nu\wedge\sqrt{\nu})\sqrt{n}}\quad\text{for}\,\,v\T\bar{x} \leq 0
\end{align}

%In the following section we introduce an iterative algorithm, which possess similar guarantees to those of the idealized estimator, but is computable provided a sufficiently warm start.
%%%%%%%%
% 3. Cone Projected Power Iteration
%%%%%%%%

\section{Cone Projected Power Iteration}
\label{estimation:section}
In this section we introduce a computationally tractable algorithm -- the cone projected power iteration -- to solve the constrained eigenvector estimation problem \eqref{pca_formula}. We also demonstrate that our algorithm also possesses similar estimation guarantees to the idealized estimator considered in the previous section.

%%%%%%%%
\subsection{Algorithm}
\label{sec:algo}
In the absence of constraints, solving \eqref{pca_formula} would have been equivalent to finding the principal eigenvector of $A$. One way to find the principal eigenvector is to use the power iteration algorithm, which starts with a vector $v_0$, such that $v_0$ has a non-zero dot product with the principal eigenvector of $A$, and iterates the following recursion $v_t = \frac{A v_{t - 1}}{\|A v_{t - 1}\|}$ for $t = 1, 2, \ldots$. We now suggest a simple modification of the ordinary power iteration algorithm, to target the constrained problem (\ref{pca_formula}). We modify the power iteration by adding a projection step in each iteration in order to force the algorithm to choose vectors belonging to the set $K \bigcap \mathbb{S}^{p-1}$.

\vskip 0.5cm
\resizebox{14cm}{!}{%
\centering
\begin{algorithm}[H]
\caption{Cone Projected Power Iteration Single Vector Version}
\label{algo1}
\KwInput{$\Delta\in\mathbb{R}$ stopping criteria, $K$ a convex cone, $v_0$ initialization}
\KwOutput{$v_{out}$}
\KwData{$A\in\mathbb{R}^{p\times p}$ positive semi-definite matrix}

\KwRepeat{$\|v_{t+1} - v_{t}\|\leq\Delta$}{$v_{t + 1} \leftarrow \frac{\Pi_KAv_{t}}{\|\Pi_KAv_{t}\|}$\\
$t++$}
$v_{out}\leftarrow v_{t+1}$
\end{algorithm}
}
\vskip 0.5cm

To achieve the consistency of Algorithm \ref{algo1}, $v_0\T \bar{x} \geq c_0 > 0$ for some $c_0$ will be required (see Theorem \ref{stop} below). Sometimes it may be more convenient to assume that one can find a vector $v_0$ for which it is only known that $|v_0\T \bar{x}| \geq c_0 > 0$, since the sign of $\bar{x}$ is unknown. To facilitate this assumption, we suggest Algorithm \ref{algo2} which runs the procedure of Algorithm \ref{algo1} two times, once starting with $v_0$ and once with $-v_0$, and returns the vector $v$ corresponding to the larger quadratic form $v\T A v$. The motivation of this idea is clear: for at least one of the two starts $v_0$ or $-v_0$ we will have a dot product with $\bar{x}$ which is bigger than $c_0$. Next, we hope that the output vector $v$ which has a bigger product $v\T A v$ will be the one that has started with a positive dot product with $\bar{x}$. Theorem \ref{bad_vt} shows that even if this is not the case, the fact that the product $v\T A v$ is larger gives us a leverage on the final output vector $v$. This vector will be close to $\bar{x}$ in either case.

\vskip 0.5cm
\resizebox{14cm}{!}{%
\centering
\begin{algorithm}[H]
\caption{Cone Projected Power Iteration Double Vectors Version}
\label{algo2}
\KwInput{$\Delta\in\mathbb{R}$ stopping criteria, $K$ a convex cone, $v_0$ initialization}
\KwOutput{$v_{out}$}
\KwData{$A\in\mathbb{R}^{p\times p}$ positive semidefinite matrix}

$v_+$ = Output of Algorithm \ref{algo1} initialized with $v_0$\\
$v_-$ = Output of Algorithm \ref{algo1} initialized with $-v_0$\\
$v_{out} \leftarrow \argmax_{v\in \{v_+, v_-\}} v\T  Av$
\end{algorithm}
}
\vskip 0.5cm

%%%%%%%%
\subsection{Convergence}
Proposition \ref{algo_complexity} below, gives the number of iterations needed to achieve $\|v_{t} - v_{t-1}\|\leq \Delta$, and implies that Algorithm \ref{algo1} converges. Since the computing time of Algorithm \ref{algo2} is at most twice that of Algorithm \ref{algo1}, Algorithm \ref{algo2} converges as well. The proof of Proposition \ref{algo_complexity} also reveals that the sign of $v_t\T v_{t-1}$ is positive and never flips, so that the value of $\|v_{t} - v_{t-1}\|$ remains  smaller than $\sqrt{2}$.

\begin{proposition}[Algorithm \ref{algo1} Time Complexity]
\label{algo_complexity}
For Algorithm \ref{algo1}, to get 
$\|v_{t}-v_{t-1
}\|\leq \Delta$, we need at most $\Big\lceil \frac{\log[\frac{\lambda+\omek}{v_0\T Av_0}]}{\log(1+\Delta^2)}\Big\rceil$ iterations, assuming that $v_0\T  A v_0 > 0$.
\end{proposition}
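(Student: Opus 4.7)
The plan is to track the quadratic form $\alpha_t := v_t\T A v_t$ and show that whenever the stopping criterion $\|v_{t+1}-v_t\|\leq \Delta$ fails at iteration $t$, one has $\alpha_{t+1} \geq (1+\Delta^2)\alpha_t$. Since $\alpha_t = v_t\T \bar A v_t + v_t\T E v_t \leq \lambda + \omek$ for every $v_t \in K \cap \mathbb{S}^{p-1}$, the sequence can grow geometrically for at most $\lceil \log[(\lambda+\omek)/\alpha_0]/\log(1+\Delta^2) \rceil$ steps, which is precisely the claimed iteration count.

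To justify the geometric growth, I would first collect two standard properties of the projection $p := \Pi_K w$ onto a closed convex cone $K$: the Moreau-type identity $\langle w, p\rangle = \|p\|^2$, and the inequality $\langle w, u\rangle \leq \langle p, u\rangle$ for every $u\in K$ (which follows from the variational characterization $\langle w-p,\,u-p\rangle \leq 0$ combined with the identity). Applied with $w = Av_t$ and $u = v_t$, these give $v_{t+1}\T A v_t = \|\Pi_K A v_t\|$ after normalization, and $(\Pi_K A v_t)\T v_t \geq \alpha_t$. Consequently $v_{t+1}\T v_t \geq \alpha_t/\|\Pi_K Av_t\| \geq 0$, which already proves the side remark that consecutive iterates maintain a non-negative inner product and that $\|v_{t+1}-v_t\| \leq \sqrt{2}$.

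The quantitative step is then short. Cauchy--Schwarz gives $\|\Pi_K Av_t\|^2 = (v_{t+1}\T A v_t)^2 \leq \alpha_{t+1}\alpha_t$, so $\alpha_{t+1} \geq \|\Pi_K Av_t\|^2/\alpha_t$. If iteration $t$ does not stop, then $v_{t+1}\T v_t < 1 - \Delta^2/2$, which, combined with the lower bound $v_{t+1}\T v_t \geq \alpha_t/\|\Pi_K Av_t\|$, forces $\|\Pi_K Av_t\| > \alpha_t/(1-\Delta^2/2)$. Substituting yields $\alpha_{t+1} > \alpha_t(1-\Delta^2/2)^{-2} \geq \alpha_t(1+\Delta^2)$, where the last inequality rests on the elementary algebraic fact $(1+\Delta^2)(1-\Delta^2/2)^2 \leq 1$ valid for $\Delta \leq \sqrt{3}$ (automatic since $\|v_{t+1}-v_t\|\leq \sqrt{2}$). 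Iterating this growth estimate and invoking the uniform upper bound $\alpha_t \leq \lambda+\omek$ closes the proof.

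The only mildly subtle part of the argument is recognizing that cone projection simultaneously enjoys the Moreau identity and the normal-cone inequality above; once those are in hand, the rest is bookkeeping and a short logarithmic estimate. I do not expect any genuine obstacle.
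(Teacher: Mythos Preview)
Your proposal is correct and follows the same overall strategy as the paper: track $\alpha_t = v_t\T A v_t$, show it is bounded above by $\lambda+\omek$, and show it grows by a factor at least $1+\Delta^2$ at every step where the stopping criterion fails. The cone-projection ingredients you isolate (the Moreau identity $\langle Av_t,\Pi_K Av_t\rangle = \|\Pi_K Av_t\|^2$ and the inequality $\langle Av_t, v_t\rangle \leq \langle \Pi_K Av_t, v_t\rangle$) are exactly those the paper uses.

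The one point of divergence is in how the growth step is obtained. The paper expands additively,
\[
\alpha_{t+1}-\alpha_t \;=\; 2\langle Av_t, v_{t+1}-v_t\rangle + (v_{t+1}-v_t)\T A(v_{t+1}-v_t) \;\geq\; \alpha_t\|v_{t+1}-v_t\|^2,
\]
using positive semidefiniteness on the quadratic term and the same projection inequality you use on the cross term; this yields the clean bound $\alpha_{t+1}/\alpha_t \geq 1+\|v_{t+1}-v_t\|^2$ directly, with no auxiliary algebra. Your route instead passes through Cauchy--Schwarz ($(v_{t+1}\T Av_t)^2 \leq \alpha_{t+1}\alpha_t$), then the bound $\|\Pi_K Av_t\| > \alpha_t/(1-\Delta^2/2)$, and finally the elementary inequality $(1-\Delta^2/2)^{-2}\geq 1+\Delta^2$. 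Both arguments exploit positive semidefiniteness of $A$ (you via Cauchy--Schwarz, the paper via the dropped quadratic term) and arrive at the same iteration count; the paper's version is slightly more direct and yields a marginally stronger pointwise inequality, while yours makes the role of Cauchy--Schwarz explicit.
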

\begin{proof} The proof will proceed in two steps.
\begin{enumerate}
    \item We will first argue that $\frac{v_{t+1}\T Av_{t+1}}{v_t\T Av_t}\geq 1+ \|v_{t+1}-v_t\|^2$, so that $\{v_t\T Av_t\}$ is an increasing series. By Moreau's decomposition (see Theorem \ref{moreausthm} in the Appendix) and the identity $\|\Pi_KAv_{t-1}\| = v_t\T Av_{t-1}$ (see 1. of Lemma \ref{ineqs} in the Appendix) we have,
    \begin{align*}
        v_t\T v_{t+1} = \frac{v_t\T \Pi_K Av_t}{\|\Pi_K Av_t\|} \geq \frac{v_t\T  Av_t}{v_{t+1}\T Av_t}
        % \label{vv_t+1}
    \end{align*}
    Then 
    \begin{align*}
        v_{t+1}\T Av_{t+1} - v_t\T Av_t& = 2\langle Av_t, v_{t+1} - v_t\rangle + 
        (v_{t+1} - v_t)\T  A (v_{t+1} - v_t) \\
        & \geq 2\langle Av_t, v_{t+1} - v_t
        \rangle \\
        & \geq 2 v_{t+1}\T  Av_t(1-v_t\T v_{t+1}) 
        % \qquad \qquad \text{By (\ref{vv_t+1})}
        \\
        & \geq v_t\T  Av_t\|v_{t+1} - v_t\|^2
    \end{align*}
    From the above inequality and the fact that $v_0\T A v_0 > 0$, it follows that $v_t\T A v_t > 0$ for all $t$. Hence the claimed inequality follows by rearranging terms. Since $\{v_t\T Av_t\}$ is strictly increasing, and bounded by the first principal eigenvalue of $A$, the algorithm converges. At convergence, $\{v_t\T Av_t\}$ is stationary.
    % (Convergent b/c $\frac{v_{t+1}\T Av_{t+1}}{v_t\T Av_t}>1$ strictly. $==1$ only at convergence. )
    \item Suppose $\|v_{t}-v_{t-1}\|\geq \Delta$ for $t \leq n$, then 
    \begin{align*}
        \frac{v_n\T Av_n}{v_0\T Av_0}  \geq (1+\Delta^2)^n 
        \,\,\,\, 
        \Rightarrow 
        \,\,\,\, 
        n  \leq \frac{\log[\frac{\lambda+\omek}{v_0\T Av_0}]}{\log(1+\Delta^2)}
    \end{align*}
\end{enumerate}
\end{proof}

% \begin{remark}
For certain types of convex cones such as the monotone cone or the positive orthant, the time complexity of Algorithm \ref{algo1} can be calculated explicitly. The complexity of computing $Av_t$ is $\mathcal{O}(p^2)$. The projection onto a monotone cone can be obtained by isotonic regression \citep{barlow1972statistical}, which takes $\mathcal{O}(p)$ flops using the pool adjacent violators algorithm \citep[Page~9]{mair2009isotone}. Thus the complexity of one iteration is $\mathcal{O}(p^2)$. 
For the number of iterations needed, by Lemma \ref{tan_gw} under the spiked covariance model, the upper bound of $\omek$ is $\sqrt{\nu+1}(\sqrt{\frac{\log p}{n}} \vee \frac{\log p}{n})$, given the Gaussian complexity of a $p$-dimensional monotone cone is $\sqrt{\log p}$ \citep[Section~D.4]{amelunxen2014living}. Since $v_0\T Av_0$ and $\Delta$ are constants controlled by user, the number of iterations is of the order $\mathcal{O}\Big(\log[\lambda+\sqrt{\nu+1}(\sqrt{\frac{\log p}{n}} \vee \frac{\log p}{n})]\Big)$. The overall time complexity of Algorithm \ref{algo1} for monotone cone is polynomial, and Algorithm \ref{algo2} retains the same order of complexity since it just applies Algorithm \ref{algo1} twice. Similarly one can show that the number of iterations needed for the positive orthant case is $\mathcal{O}(\log[\lambda+\sqrt{\nu+1}\,(p/n)])$ since the order of $\|E\|_K$ in this case is $\sqrt{\nu+1}\,(p/n)$.

%%%%%%%%%
\subsection{Upper Bound}
The cone projected power iteration algorithm attempts to solve problem \eqref{pca_formula} in finite time, but how precise is its estimation? In this section we give some statistical guarantees of Algorithm \ref{algo1} and Algorithm \ref{algo2}. A consistent estimation is achieved if the eigengap $\nu$ is larger than a certain threshold, the stopping criterion $\Delta$ is small enough, and the starting vector $v_0$ is properly chosen. 

In Theorem \ref{stop}, we provide an upper bound of estimation error of the cone projected power iteration algorithm, which is valid for all positive semidefinite input matrices. Theorem \ref{stop} gives error rates which coincide exactly with the rates of the idealized estimator in Theorem \ref{cvx_est} if one assumes that its output has a positive dot product with the target vector $\bar{x}$. Achieving the same error rate as the idealized estimator, Algorithm \ref{algo1} only requires finite time to run as proved in Proposition \ref{algo_complexity}.

\begin{theorem}[$L_2$ Error Rate of Algorithm \ref{algo1}]
\label{stop}
To solve the problem \eqref{pca_formula}, we apply Algorithm \ref{algo1} on the matrix $A$, and get the output vector $v_t$. Suppose that the initial vector $v_0$ satisfies $v_0\T \bar{x}\geq c_0 > c_{-1}$. Assume further that $\nu \geq (3 + 2\sqrt{2})\omek$. If the  stopping criterion satisfies $\Delta \leq \min 
\Big\{
\frac{5\omtan}{4(c_0\wedge c_{1})\nu},
\frac{4\omtan^2}{(c_0\wedge c_{1})\lambda \nu},
\frac{\omek}{2\lambda},
1
\Big\}$, 
we have 
\begin{align}\label{conclusion:of:thm:stop}
\|v_t-\bar{x}\|\wedge\|v_t+\bar{x}\|\leq
\sqrt{\frac{8\omek}{(c_0\wedge c_{1})\nu}}
\wedge
\frac{41\omtan}{(c_0\wedge c_{1})\nu}
\end{align}
\end{theorem}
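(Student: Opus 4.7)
The plan is to reduce the analysis of the finite-iteration output of Algorithm \ref{algo1} to the idealized estimator argument in Theorem \ref{cvx_est}, absorbing the extra slack introduced by the stopping criterion $\Delta$. The technical heart is the analysis of the scalar alignment $\alpha_t := v_t\T \bx$ along iterations. Using Moreau's decomposition together with the fact that $\bx \in K$, one obtains
\begin{align*}
\alpha_{t+1}\|\Pi_K A v_t\| = \bx\T\Pi_K A v_t \geq \bx\T A v_t \geq \lambda\alpha_t - \omek,
\end{align*}
while decomposing $v_t, v_{t+1}$ into components along and perpendicular to $\bx$ and using $w\T \bar A w \leq \mu$ for unit $w \perp \bx$ gives
\begin{align*}
\|\Pi_K A v_t\| = v_{t+1}\T A v_t \leq \lambda\alpha_{t+1}\alpha_t + \mu\sqrt{(1-\alpha_{t+1}^2)(1-\alpha_t^2)} + \omek.
\end{align*}
Combining, the fixed-point equation for the recursion simplifies to the quadratic $\nu c^2 - (\nu - \omek)c + \omek = 0$, whose roots are precisely the stable attractor $c_1$ and the unstable fixed point $c_{-1}$; both are well defined exactly under the assumption $\nu \geq (3+2\sqrt 2)\omek$. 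Since $\alpha_0 = c_0 > c_{-1}$ sits strictly above the unstable fixed point, the recursion keeps $\alpha_t \geq c_0 \wedge c_1$ for all $t$, and in particular $v_t\T\bx \geq 0$ throughout the run.

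Next I would convert the stopping criterion $\|v_{t+1}-v_t\| \leq \Delta$ into a near-optimality statement of the form
\begin{align*}
\bx\T A\bx - v_t\T A v_t \lesssim \frac{\omek}{c_0\wedge c_1} + \frac{\Delta(\lambda + \omek)}{c_0\wedge c_1}.
\end{align*}
The derivation uses the Moreau identity $v_{t+1}\T A v_t = \|\Pi_K A v_t\|$, the two bounds on $\|\Pi_K A v_t\|$ and $\bx\T\Pi_K A v_t$ established in the first paragraph, and the monotonicity inequality $v_{t+1}\T A v_{t+1} \geq v_t\T A v_t(1 + \|v_{t+1}-v_t\|^2)$ from the proof of Proposition \ref{algo_complexity}. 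The factor $(c_0\wedge c_1)^{-1}$ arises from dividing by $\alpha_t$ when converting inner-product inequalities into quadratic-form inequalities; the stopping threshold in the theorem statement is calibrated so that the $\Delta$-dependent term is of the same order as, or dominated by, the noise contribution.

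Finally I would mimic the chain of inequalities in the proof of Theorem \ref{cvx_est}. Splitting $A = \bar A + E$, the above suboptimality bound implies
\begin{align*}
\lambda - v_t\T \bar A v_t \lesssim \frac{\omek}{c_0\wedge c_1}
\end{align*}
after controlling the noise via $|u\T E u| \leq \omek$ for $u \in K$ (for the square-root bound), or, for the linear bound, after exploiting $v_t, \bx, v_t - \bx \in \mathcal{T}_K(\bx)$ to get $|v_t\T E v_t - \bx\T E \bx| \leq 2\|v_t - \bx\|\,\omtan$. Combining with the eigengap inequality $\lambda - v_t\T\bar A v_t \geq \nu(1 - (v_t\T\bx)^2) \geq (\nu/2)\|v_t - \bx\|^2$, valid since $v_t\T\bx \geq 0$ by the first paragraph, and solving the resulting quadratic or linear inequalities yields the bounds in \eqref{conclusion:of:thm:stop}. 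The main obstacle is the first paragraph: pinning down the scalar recursion for $\alpha_t$ sharply enough so that its fixed points coincide with $c_{-1}$ and $c_1$, and carefully tracking the noise terms $\bx\T E v_t$ and $v_{t+1}\T E v_t$ so the final constants match those in the theorem statement.
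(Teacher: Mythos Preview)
Your overall architecture matches the paper's: first show $\alpha_t=v_t\T\bx\geq c_0\wedge c_1$ for all $t$ (the content of the paper's Lemma~\ref{bigger_c0}), then convert the stopping condition $\|v_t-v_{t-1}\|\leq\Delta$ into an error bound. The square--root bound $\sqrt{8\omek/((c_0\wedge c_1)\nu)}$ can indeed be obtained via your near--optimality route: one can show $v_t\T Av_t\geq(\lambda\bx\T v_{t-1}-\omek)/(\bx\T v_t)$ from Moreau plus $v_t\T Av_{t-1}\leq v_t\T Av_t$, and then argue exactly as you indicate.

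There is, however, a genuine gap for the linear $\omtan$ bound. Your near--optimality statement is in terms of $\omek$: $\bx\T A\bx - v_t\T Av_t \lesssim \omek/(c_0\wedge c_1)+\Delta\mbox{-terms}$. If you now expand $A=\bar A+E$ and use $|v_t\T Ev_t-\bx\T E\bx|\leq4\|v_t-\bx\|\omtan$ as you suggest, you obtain
\[
\tfrac{\nu}{2}\|v_t-\bx\|^2\;\leq\;\lambda - v_t\T\bar Av_t\;\lesssim\;\frac{\omek}{c_0\wedge c_1}+\|v_t-\bx\|\,\omtan,
\]
whose solution is $\|v_t-\bx\|\lesssim \omtan/\nu + \sqrt{\omek/((c_0\wedge c_1)\nu)}$. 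The residual $\sqrt{\omek/\nu}$ term does not disappear, so you never reach the pure bound $41\omtan/((c_0\wedge c_1)\nu)$. The paper avoids this by \emph{not} passing through $\bx\T A\bx - v_t\T Av_t$: it sandwiches the cross term $\bx\T Av_{t-1}$ directly, writing $\bx\T Ev_{t-1}=\bx\T E\bx+\bx\T E(v_{t-1}-\bx)$ on the lower side and $v_t\T Ev_t=\bx\T E\bx+(v_t\T Ev_t-\bx\T E\bx)$ on the upper side, so that the $\bx\T E\bx$ contributions combine into $(1-\bx\T v_{t-1})\bx\T E\bx$, a term of size $O(\|v_t-\bx\|\omtan)$ rather than $O(\omek)$. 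Every noise contribution is then first order in $\|v_t-\bx\|$ with coefficient $\omtan$, and the resulting quadratic gives the linear rate. Your route can be repaired, but only by redoing the near--optimality derivation entirely in $\omtan$, keeping $\bx\T E\bx$ explicit rather than bounding it by $\omek$.

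A smaller point on your first paragraph: combining your stated bounds $\alpha_{t+1}\|\Pi_KAv_t\|\geq\lambda\alpha_t-\omek$ and $\|\Pi_KAv_t\|\leq\lambda\alpha_{t+1}\alpha_t+\mu\sqrt{(1-\alpha_{t+1}^2)(1-\alpha_t^2)}+\omek$ at a fixed point $\alpha_t=\alpha_{t+1}=c$ yields $\nu c^2-\nu c+\omek=0$, not $\nu c^2-(\nu-\omek)c+\omek=0$. The paper gets the latter because it first uses $v_t\T Av_{t-1}\leq v_t\T Av_t$ and then bounds $v_t\T\bar Av_t\leq\nu(\bx\T v_t)^2+\mu$, together with $(\bx\T v_t)^2\leq\bx\T v_{t-1}$ in the case $\bx\T v_t\leq\bx\T v_{t-1}$; this is what produces the roots $c_{-1},c_1$ exactly as defined.
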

The upper bound in Theorem \ref{stop} can be evaluated in terms of the Gaussian complexity of set $K$ and $\mathcal{T}_K(\bx)$ under the spiked covariance model. With the same derivation and assumptions after Theorem \ref{cvx_est}, we obtain:
\begin{align}
    \mathbb{E}\,\big[\|v_t-\bar{x}\|\wedge\|v_t+\bar{x}\|\big]
     \lesssim \sqrt{\frac{w(K\bigcap\mathbb{S}^{p-1})}{(\nu\wedge\sqrt{\nu})\sqrt{n}}} \wedge \frac{w(\mathcal{T}_K(\bx)\bigcap\mathbb{S}^{p-1})}{(\nu\wedge\sqrt{\nu})\sqrt{n}}.
\label{algo1_error_spiked}
\end{align}

Next we study the estimation error of Algorithm \ref{algo2}. Recall that Algorithm \ref{algo2} runs the cone projected power iteration twice, once with an initial vector $v_0$ and once with $-v_0$. Here we have no knowledge about $v_0$, so we do not know whether $v_0$ or $-v_0$ will have a positive dot product with $\bx$. Without loss of generality we assume $v_0$ to be the ``better'' initialization such that $v_0\T \bar{x} \geq c_0$. Let $v_+$ be the resulting vector started with $v_0$, and $v_-$ be the resulting vector started with $-v_0$. If $v_-\T Av_- \leq v_+\T Av_+$, Algorithm \ref{algo2} will output $v_+$, then Theorem \ref{stop} guarantees that $\|v_+ - \bar{x}\|$ is small. Otherwise the output will be $v_-$. Theorem \ref{bad_vt} ensures that $\|v_- - \bar{x}\| \wedge \|v_- + \bar{x}\|$ is small based on the fact that $\|v_+ - \bar{x}\|$ is small.
\begin{theorem}[$L_2$ Error Rate of Algorithm \ref{algo2} with Bad Initialization]
\label{bad_vt}
To solve the problem \eqref{pca_formula} we apply Algorithm \ref{algo2} on the matrix $A$. Without loss of generality let $v_0\T \bar{x} \geq c_0$, and suppose that $c_0 > c_{-1}$. Assume additionally that $\nu \geq (3 + 2\sqrt{2})\omek$. If $v_-\T A v_- \geq v_+\T Av_+$, the output of Algorithm \ref{algo2} will be $v_-$, but the $L_2$ error will still be bounded as
\begin{align*}
\|v_- -\bx\|\wedge \|v_- +\bx\|
\leq B_1 \wedge B_2
\end{align*}
where 
\begin{align*}
    B_1 & = \sqrt{\frac{2\lambda}{\nu}}\|v_+-\bx\|+
    \sqrt{\frac{4\omek}{\nu}}; \\
    B_2 & = \Big( 
    \frac{8\omek}{\nu} \vee 
    \frac{8\omtan}{\nu}
    \Big) + 
    \sqrt{\frac{2\lambda\|v_+-\bx\|^2 + 8\|v_+-\bx\|\omtan}{\nu}}.
\end{align*}
\end{theorem}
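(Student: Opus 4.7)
The target is $d_- := \|v_- - \bx\| \wedge \|v_- + \bx\|$. Since $(\|v_- - \bx\|\wedge\|v_- + \bx\|)^2 = 2(1 - |v_-\T\bx|) \leq 2(1 - (v_-\T\bx)^2) =: 2\|u\|^2$, it suffices to bound the scalar $\|u\| = \sqrt{1-(v_-\T\bx)^2}$ and scale by $\sqrt 2$. The first step is to apply Theorem \ref{stop} to $v_+$: the hypotheses $v_0\T\bx \geq c_0 > c_{-1}$ and $\nu \geq (3+2\sqrt{2})\omek$ are assumed, and inspection of the proof of Theorem \ref{stop} shows that the iterate started from a vector with a positive dot product with $\bx$ retains that sign throughout, so the conclusion reduces to control on $a := \|v_+ - \bx\|$.

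The central algebraic device is the identity
\begin{align*}
w\T A w - \bx\T A \bx = (w-\bx)\T A(w+\bx)
\end{align*}
for symmetric $A$ and unit $w, \bx$. Applied to $w = v_-$ and $w = v_+$, the hypothesis $v_-\T A v_- \geq v_+\T A v_+$ rewrites as
\begin{align*}
(v_- - \bx)\T A(v_- + \bx) \geq (v_+ - \bx)\T A(v_+ + \bx).
\end{align*}
Splitting $A = \bar A + E$, the $\bar A$-parts are standard: decomposing a unit vector along and perpendicular to $\bx$ and using that the spectrum of $\bar A$ restricted to $\bx^\perp$ lies in $[0,\mu]$ (because $\bar A\succeq 0$ and $\mu = \lambda-\nu$) yields $v_-\T\bar A v_- - \lambda \leq -\nu\|u\|^2$ (upper bound on the LHS) and $v_+\T\bar A v_+ - \lambda \geq -\lambda a^2$ (lower bound on the RHS, via $v_+\T\bx = 1 - a^2/2$).

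Two routes then handle the $E$-parts. For $B_1$ I use the blunt bound $|w\T E w|\leq \omek$ for every $w\in K\cap \mathbb{S}^{p-1}$, applied to both $v_-$ and $v_+$; combining with the $\bar A$-bounds gives $\nu\|u\|^2 \leq \lambda a^2 + 2\omek$, hence $\sqrt{2}\|u\|\leq \sqrt{2\lambda/\nu}\,a + \sqrt{4\omek/\nu} = B_1$. For $B_2$ I exploit the tangent-cone geometry. Each of the four vectors $v_\pm - \bx$ and $v_\pm + \bx$ lies in $\mathcal{T}_K(\bx)$: the former trivially from the definition with $t=1$, the latter because $v_\pm + \bx\in K$ by the convex-cone property and $K\subseteq\mathcal{T}_K(\bx)$ (write any $w\in K$ as $1\cdot((w+\bx)-\bx)$). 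The polarization-type identity $\|v - \bx\|\,\|v+\bx\| = 2\sqrt{1-(v\T\bx)^2}$ for unit $v, \bx$ then produces
\begin{align*}
|(v_- - \bx)\T E(v_- + \bx)| \leq 2\omtan\|u\|, \qquad |(v_+ - \bx)\T E(v_+ + \bx)| \leq 2a\,\omtan,
\end{align*}
and substitution into the master inequality gives a genuine quadratic $\nu\|u\|^2 - 2\omtan\|u\| \leq \lambda a^2 + 2a\,\omtan$. Taking the positive root and applying $\sqrt{x+y}\leq\sqrt x + \sqrt y$ yields $\|u\|\leq 2\omtan/\nu + \sqrt{(\lambda a^2 + 2a\,\omtan)/\nu}$; multiplying by $\sqrt 2$ and loosening the constants (using $\omek\leq \omtan$, so the $\vee$ in $B_2$ is $8\omtan/\nu$) gives $d_-\leq B_2$.

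The main obstacle is not the one-variable quadratic but the geometric preparation that makes it arise: recognizing that each of $v_\pm\pm\bx$ belongs to $\mathcal{T}_K(\bx)$, and converting the naive product-of-norms bound $\omtan\cdot\|v_- - \bx\|\,\|v_- + \bx\|$ into $2\omtan\|u\|$ through the polarization identity. This is precisely what ties the cone-restricted norm back to the very quantity $\|u\|$ we are estimating and produces the characteristic $\sqrt{\cdot}$ term in $B_2$. Once this link is set up, the remaining steps are routine.
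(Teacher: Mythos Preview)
Your argument is correct, and for $B_1$ it is essentially identical to the paper's. For $B_2$ you take a genuinely different and somewhat cleaner route. The paper subtracts $\bx\T E\bx$ from both sides and invokes the Lemma~\ref{ineqs} bounds $|v\T Ev-\bx\T E\bx|\le 4\|v-\bx\|\omtan$ and $|v\T Ev-\bx\T E\bx|\le 4\|v+\bx\|\omek$, then \emph{splits on the sign of $v_-\T\bx$}: in the positive-sign case the quadratic is in $\|v_- -\bx\|$ with linear coefficient $4\omtan$, in the negative-sign case it is in $\|v_- +\bx\|$ with linear coefficient $4\omek$; this is the origin of the $\vee$ in $B_2$. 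Your polarization identity $(v-\bx)\T E(v+\bx)=v\T Ev-\bx\T E\bx$ together with $v\pm\bx\in\mathcal{T}_K(\bx)$ and the product formula $\|v-\bx\|\,\|v+\bx\|=2\sqrt{1-(v\T\bx)^2}$ collapses both cases into a single quadratic in $\|u\|=\sqrt{1-(v_-\T\bx)^2}$, and yields the sharper constants $2\sqrt{2}$ and $4$ (which you then relax to match the statement). Your observation that $K\subseteq\mathcal{T}_K(\bx)$ forces $\omek\le\omtan$, so that the stated $\vee$ always equals $8\omtan/\nu$, is correct and shows the paper's case distinction is in fact immaterial to the final bound. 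The trade-off is that the paper's route reuses an already-proved lemma verbatim, while yours requires the extra geometric check that $v_\pm+\bx\in\mathcal{T}_K(\bx)$; once that is done, your argument is shorter and tighter.
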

In fact a simple calculation shows that
\begin{align}\label{weak_ineq}
    \|v_- - \bar{x}\| \wedge \|v_- + \bar{x}\| \lesssim \sqrt{\|E\|_K} \wedge (\|E\|_K \vee \omtan),
\end{align}
where the sign $\lesssim$ here means $\leq$, but ignores constants that may depend on $\lambda, \mu, c_0 \wedge c_1$ for simplicity. Observe that the upper bounds for the idealized estimator in Theorem \ref{cvx_est} can be reduced to a similar form as \eqref{weak_ineq}. 

%{\color{blue} Isn't the statement "}
%{\color{blue} Shortened the statements. Also changed the notations in Theorem 6 to be $v_-$ and $v_+$ ---- Why do we need to state this in all theorems: ``Suppose $\bar{A}$ is a $p\times p$ positive semidefinite matrix with a first principal eigenvector $\bar{x}\in K$ where $K\subset\mathbb{R}^p$ is a convex cone. We observe a noisy positive semidefinite matrix $A=\bar{A}+E$, where $E$ is a stochastic noise matrix.'' We should remove this repetitive text! Also, in all the theorems 2, 5, 6, seems to me we are explaining too much of the setting inside the theorems. The setting should be well explained before the theorem, so that the theorem can just be the punchline. For example, in Theorem 6 you explain what is $\tilde v_{\tilde t}$ again, where you just explained it before the theorem?}

The proof of Theorem \ref{bad_vt} does not rely on the fact that we have two output vectors $v_+$ and $v_-$ to analyze, but utilizes the fact that the final vector has a bigger product with the matrix $A$. Hence, the proof immediately extends to situations where one has multiple starting vectors $v$ (such that $v\T A v > 0$) to pick from. In particular, suppose that one has a $\delta$-covering set of the set $K\bigcap \mathbb{S}^{p-1}$ in $\|\cdot\|$, denoted with $N_2(\delta, K \bigcap \mathbb{S}^{p-1})$, for some fixed $\delta > 0$. Then we know there exists a vector $v_0 \in N_2(\delta, K\bigcap \mathbb{S}^{p-1})$ such that $v_0\T \bar{x} = 1 - \frac{\|v_0 - \bar{x}\|_2^2}{2} \geq 1 - \frac{\delta^2}{2}$. Supposing that $\delta$ is small enough so that $1 - \frac{\delta^2}{2} > c_{-1}$, we can then run Algorithm \ref{algo1} on all vectors  $v \in N_2(\delta, K\bigcap \mathbb{S}^{p-1})$ such that $v\T A v > 0$ (it can be shown that under the assumptions of Theorem \ref{bad_vt}, $v_0$ is necessarily such a vector, see the Appendix for a short proof) and output the vector $v_{out}$ with the largest $v\T A v$. Then either inequality \eqref{conclusion:of:thm:stop} or \eqref{weak_ineq} will hold for $v_{out}$. In both cases $v_{out}$ will be close to $\bar{x}$. The bottleneck of this approach is that it requires looping over all vectors in the set $N_2(\delta, K\bigcap \mathbb{S}^{p-1})$. Clearly $N_2(\delta, K\bigcap \mathbb{S}^{p-1}) \leq N_2(\delta, \mathbb{S}^{p-1}) \leq (1 + \frac{2}{\delta})^p$ \cite[see][Example~5.2]{wainwright2019high} and therefore such an approach requires exponentially many runs in the worst case, but could be tractable for some smaller sets $K$.  On larger sets such strategy may be impractical. However, for some large dimensional constrained problems, one can indeed implement the strategy in polynomial time, provided that $\omega(K \bigcap S^{p-1})$ is small. Indeed, by Sudakov's minoration principle we know that $\sqrt{\log N_2(\delta, K\bigcap \mathbb{S}^{p-1})} \leq \frac{\omega(K \bigcap S^{p-1})}{\delta}$. Since $\delta$ of constant order suffices for strong estimation guarantees of our procedures in many practical applications, one has that if $\omega(K \bigcap S^{p-1}) \lesssim \sqrt{\log p}$ one needs to obtain only polynomially many points in the packing set (assuming $\delta$ is of constant order). Two examples when such a condition is met are the positive monotone cone and the monotone cone (see also Section \ref{mnt_cone:subsec}). One challenge with this approach would be constructing an $\delta$-packing algorithmically.%However, in our numerical experiments, the estimation error is small even though we did not construct a $\delta$-covering set to guarantee the closeness of initial vector and true eigenvector. Instead, the initial vector is randomly generated from a Gaussian distribution. Explaining this phenomenon theoretically is a possible direction of future work.
% \end{remark}

%%%%%%%%
% 4. Upper and Lower Bounds
%%%%%%%%
\section{Statistical Guarantees of Cone Constrained Eigenvector Estimation}
\label{bound:section}
To complete the analysis we give a lower bound of the cone constrained eigenvector estimation problem under the spiked covariance model, and compare the upper and lower bound in both general form and specific cases.

%%%%%%%%
\subsection{Lower Bound}
\label{lower:subsec}
We will now show a lower bound on the principal eigenvector estimation error under the spiked covariance model. We use Fano's method to link the minimax error to the metric entropy of set $K$. Then we are able to evaluate the minimax error in terms of the Gaussian complexity $w(K\bigcap \mathbb{S}^{p-1})$ by using the reverse Sudakov's inequality to bound the metric entropy from below by the Gaussian complexity. This approach results in an extraneous $\log p$ factor in the denominator which may be sub-optimal but we do not presently know of a way to remove this factor.
\begin{theorem}[Minimax Lower Bound under Spiked Covariance Model]Suppose we have $n$ i.i.d. observations $X_1,...,X_n$ where $X_i\sim \mathcal{N}(0, I + \nu\bar{x}\bar{x}\T )$. Let $\bar{x} \in K$ where $K\subset\mathbb{R}^p$ is a convex cone with $w(K\bigcap\mathbb{S}^{p-1})\geq 64\sqrt{\log 3}$.
There exists a constant $C^*(K) \leq 8p$ such that for any $n$ and $\nu$ satisfying $n (\nu \wedge \nu^2) = C^*(K)$ the minimax risk of estimations of $\bx$ based on $X_1,...,X_n$ is bounded below by
\begin{align}\label{main:lower:bound}
    \inf_{\hat v} \sup_{\bx \in K \bigcap \mathbb{S}^{p-1}} \mathbb{E} \|\hat v - \bx\| \gtrsim \frac{w(K\bigcap\bbs^{p-1})}{\log p(\nu\wedge\sqrt{\nu})\sqrt{n}}
\end{align}
\label{minimax}
\end{theorem}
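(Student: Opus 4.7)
My plan is to use Fano's inequality applied to a suitably large packing of $K \cap \mathbb{S}^{p-1}$, converting the packing size lower bound into Gaussian complexity via a reverse Sudakov-type inequality. The key inputs are a KL divergence computation for spiked Gaussians, a packing construction, and a metric-entropy lower bound in terms of $w(K \cap \mathbb{S}^{p-1})$.

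First, I would compute the KL divergence between two spiked covariance models with eigenvectors $\bar{x}_i, \bar{x}_j \in K \cap \mathbb{S}^{p-1}$. Using the Gaussian KL formula together with the Sherman--Morrison identity for $(I + \nu \bar{x}_j\bar{x}_j\T)^{-1}$, the log-determinant contributions cancel (both covariances have the same determinant $1+\nu$), and a direct trace computation yields
\begin{align*}
    KL\bigl(P_i^{\otimes n} \,\|\, P_j^{\otimes n}\bigr) = \frac{n\nu^2}{2(1+\nu)}\bigl(1 - (\bar{x}_i\T \bar{x}_j)^2\bigr) \asymp n(\nu \wedge \nu^2)\bigl(1 - (\bar{x}_i\T \bar{x}_j)^2\bigr).
\end{align*}
Since $1 - (\bar{x}_i\T \bar{x}_j)^2 \leq 1$, the worst-case pairwise KL is at most a constant multiple of $n(\nu \wedge \nu^2) = C^*(K)$. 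This is where the calibration $n(\nu\wedge\nu^2) = C^*(K)$ enters: it keeps the maximal KL at a fixed finite scale, depending only on $K$.

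Next, I would pick an $\epsilon$-separated set $\{\bar{x}_1, \ldots, \bar{x}_M\} \subset K \cap \mathbb{S}^{p-1}$, fed into the standard Fano lower bound $\inf_{\hat v}\sup_{\bar{x}}\mathbb{E}\|\hat v - \bar{x}\| \geq \frac{\epsilon}{2}\bigl(1 - \frac{I(V;X_1^n) + \log 2}{\log M}\bigr)$, using $I \leq \max_{i,j} KL_{ij}$. Requiring $\log M \geq c \cdot C^*(K)$ for a suitably large constant $c$ (so that the Fano factor is bounded away from zero) reduces the problem to producing a packing with $\log M$ large enough, while making $\epsilon$ as large as possible. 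A reverse Sudakov-type inequality of the form $\log N(\epsilon, K\cap\mathbb{S}^{p-1}) \gtrsim w(K \cap \mathbb{S}^{p-1})^2 / (\epsilon^2 \log^2 p)$ would then be invoked to bound the available packing size from below by the Gaussian complexity. Solving $w^2/(\epsilon^2 \log^2 p) \gtrsim C^*(K)$ for $\epsilon$ and using $\sqrt{C^*(K)} = (\nu\wedge\sqrt{\nu})\sqrt{n}$ produces exactly the claimed rate $\epsilon \gtrsim \frac{w(K\cap\mathbb{S}^{p-1})}{\log p \cdot (\nu\wedge\sqrt{\nu})\sqrt{n}}$, and the restriction $w(K\cap\mathbb{S}^{p-1}) \geq 64\sqrt{\log 3}$ ensures the packing is nontrivial relative to the $\log 2$ slack in Fano.

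The main technical obstacle is the reverse Sudakov step: Sudakov minoration only gives an \emph{upper} bound $\log N(\epsilon, T) \lesssim w(T)^2/\epsilon^2$, and converting Gaussian complexity into a \emph{lower} bound on the metric entropy for a general cone section is delicate. The extraneous $\log p$ factor flagged in the theorem statement essentially arises here, and eliminating it would require either a structural result about $K \cap \mathbb{S}^{p-1}$ (e.g., volume ratios or a majorizing-measure construction) or a direct cone-specific packing argument. A secondary but simpler issue is verifying that the maximal pairwise KL is controlled by the diameter of the packing; here it is only bounded by the trivial constant $4 C^*(K)$ since the packing is not local, which is acceptable because $C^*(K)$ is fixed by the calibration. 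All other steps---the KL computation, Fano's inequality, and the final algebra---are routine.
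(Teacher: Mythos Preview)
Your overall architecture---Fano's method, the exact KL computation $\frac{n\nu^2}{2(1+\nu)}(1-(\bar x_i\T\bar x_j)^2)$, and a reverse-Sudakov step to convert packing entropy into Gaussian complexity---matches the paper's proof. The gap is in the precise form of the reverse Sudakov inequality you invoke, and in the order in which $\epsilon$ and $C^*(K)$ are chosen.

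You write that one ``would invoke'' an inequality of the form $\log N(\epsilon, K\cap\mathbb{S}^{p-1}) \gtrsim w(K\cap\mathbb{S}^{p-1})^2/(\epsilon^2\log^2 p)$, implicitly for every $\epsilon$, and then solve for the largest $\epsilon$ making the right side exceed $C^*(K)$. That universal inequality is false in general: for $\epsilon$ near the diameter, $\log N_\epsilon$ can vanish while the right side stays positive. What is actually available---and what the paper proves via a Dudley-type chaining argument (its Lemma~\ref{sudakov_vari})---is the \emph{upper} bound
\[
w(K\cap\mathbb{S}^{p-1}) \;\leq\; 64\sqrt{\log 3} + C\log p \,\sup_{\epsilon\geq 0,\,N_\epsilon\geq 4}\epsilon\sqrt{\log N_\epsilon}.
\]
Under the hypothesis $w\geq 64\sqrt{\log 3}$ this yields only an \emph{existential} statement: there is some $\epsilon^* = \argmax_{\epsilon}\epsilon\sqrt{\log N_\epsilon}$ with $\epsilon^*\sqrt{\log N_{\epsilon^*}}\gtrsim w/\log p$. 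The logic therefore runs in the opposite order from your sketch: one first fixes $\epsilon^*$ as this argmax, and \emph{then defines} $C^*(K):=\tfrac{1}{2}\log N_{\epsilon^*}$. With this choice, $N_{\epsilon^*}\geq 4$ handles the $\log 2$ slack in Fano, the calibration $n(\nu\wedge\nu^2)=C^*(K)$ makes the Fano factor equal $1/4$, and substituting $\sqrt{\log N_{\epsilon^*}}=\sqrt{2n(\nu\wedge\nu^2)}$ into $\epsilon^*\gtrsim w/(\log p\sqrt{\log N_{\epsilon^*}})$ gives the stated rate. The bound $C^*(K)\leq 8p$ then follows from the \emph{forward} Sudakov inequality $\log N_{\epsilon^*}\lesssim (w/\epsilon^*)^2$ combined with the observation from the chaining construction that $\epsilon^*\geq w/(4\sqrt{p})$---a point your sketch does not address.
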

%
%We note that our lower bound is not applicable to any possible signal strength, but only for a specific choice of $\nu$. 
We note that our lower bound is slightly atypical, as it does not hold for any pair of sample size and signal $(n,\nu)$. However, our bound does show that for any fixed value of the sample size $n$ there exists a signal strength $\nu$ for which this bound holds, and conversely for any fixed signal strength $\nu$, there exist a sample size $n$ for which the bound holds. The reason for this requirement is our desire to prove this bound for general convex cones, which prevents us from ``localizing'' it, which is a trick allowing one to claim the bound for all possible pairs $(n,\nu)$.  In addition, it is useful to note that our lower bound does not have $\frac{w(K\bigcap\bbs^{p-1})}{\log p(\nu\wedge\sqrt{\nu})\sqrt{n}}\wedge1$ in the left hand side of \eqref{main:lower:bound}, since the the value $C^*(K) = n (\nu \wedge \nu^2)$  guarantees that the lower bound in Theorem \ref{minimax} remains smaller than 1. Moreover, \eqref{main:lower:bound} is a lower bound regarding the quantity $\|\hat v - \bx\|$ and not $\|\hat v - \bx\|\wedge \|\hat v + \bx\|$. The latter can be smaller in principle. By carefully inspecting the proof of Theorem \ref{minimax} one can see that if $K'\subseteq K$ such that all vector pairs in $K'$ have a positive dot product, a lower bound regarding $\|\hat v - \bx\|\wedge \|\hat v + \bx\|$ can be obtained.
\begin{corollary}
In the setting of Theorem \ref{minimax}, for a set $K'\subseteq K$ such that all vector pairs in $K'$ have a positive dot product, the minimax risk of estimations of $\bx$ based on $X_1,...,X_n$ is bounded below by
$$\inf_{\hat v} \sup_{\bx \in K \bigcap \mathbb{S}^{p-1}} \mathbb{E} \big[\|\hat v - \bx\|\wedge\|\hat v + \bx\|\big] \gtrsim \frac{w(K'\bigcap\bbs^{p-1})}{\log p(\nu\wedge\sqrt{\nu})\sqrt{n}}$$
\label{minimax_coro}
\end{corollary}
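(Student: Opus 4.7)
The plan is to mimic the proof of Theorem \ref{minimax} but restrict the Fano packing construction to the subcone $K'$. The essential observation is that on $K'\bigcap\mathbb{S}^{p-1}$ the ``signless'' distance $d(u,v):=\|u-v\|\wedge\|u+v\|$ agrees with the usual Euclidean distance $\|u-v\|$. Indeed, for any unit vectors $u,v\in K'$ the hypothesis gives $u\T v\geq 0$, so
\begin{align*}
\|u-v\|^2 = 2-2u\T v \leq 2 \leq 2+2u\T v = \|u+v\|^2,
\end{align*}
hence $d(u,v)=\|u-v\|$. Consequently, a Euclidean packing of $K'\bigcap\mathbb{S}^{p-1}$ at scale $\delta$ is automatically a $\delta$-packing in the signless metric.

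Next I would redo the packing construction used in the proof of Theorem \ref{minimax}, but demand that the $M$ vectors $\{\bar x_1,\ldots,\bar x_M\}\subset K'\bigcap\mathbb{S}^{p-1}$. Because the reverse Sudakov inequality translates an upper bound on Gaussian complexity into an upper bound on metric entropy, using it on $K'$ yields $\log M \gtrsim w^2(K'\bigcap\mathbb{S}^{p-1})/(\delta^2\log p)$ exactly as in Theorem \ref{minimax}, now with $K'$ in place of $K$. The KL divergence computation between the Gaussian laws indexed by $\bar x_i$ and $\bar x_j$ depends only on $\bar x_i\T \bar x_j$ and is insensitive to which ambient cone the vectors live in, so the same bound on $\mathrm{KL}$ in terms of $n$ and $\nu$ carries through verbatim; in particular the calibration $n(\nu\wedge\nu^2)=C^*(K')$ (absorbed into the implicit constant) is achievable identically.

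Finally I would invoke Fano's inequality to conclude that for any estimator $\hat v$
\begin{align*}
\sup_{i\in [M]} \mathbb{E}\,d(\hat v,\bar x_i)\gtrsim \delta,
\end{align*}
and by the metric equivalence noted above this is precisely the signless lower bound claimed. Optimizing $\delta$ against the KL/Fano constraint $\log M \gtrsim n(\nu\wedge\nu^2)$ reproduces the scaling $w(K'\bigcap\mathbb{S}^{p-1})/(\log p\,(\nu\wedge\sqrt{\nu})\sqrt{n})$, yielding \eqref{main:lower:bound} with $K'$ replacing $K$ and the signless loss on the left.

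The only real conceptual content is the metric-equivalence step; the remaining steps are a direct rerun of Theorem \ref{minimax}. The mild obstacle to be careful about is ensuring that the packing constructed by reverse Sudakov can in fact be taken inside $K'$ rather than merely inside $K$; since $w(\cdot)$ is applied to $K'\bigcap\mathbb{S}^{p-1}$ and reverse Sudakov applies to any bounded set, this is automatic, but it explains why the Gaussian complexity appearing on the right-hand side is that of $K'$ and not $K$.
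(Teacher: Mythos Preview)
Your proposal is correct and follows essentially the same approach as the paper: restrict the Fano packing to $K'\bigcap\mathbb{S}^{p-1}$, rerun the proof of Theorem~\ref{minimax} verbatim (reverse Sudakov, KL bound, Fano), and use the observation that on $K'$ the signless distance coincides with the Euclidean distance so that the packing separation transfers. Your write-up is in fact a bit more explicit than the paper's, which dispatches the corollary in three lines by pointing back to the proof of Theorem~\ref{minimax}.
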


%%%% READ UNTIL HERE %%%%%%
%{\color{blue} READ UNTIL HERE}

% {\color{red} Not only by a log p factor but also by a square root in general? Also, how does it make sense to compare to bound (4). There you assume $\nu$ is large enough, but here you set $\nu$ to a particular value which may be smaller than the one you require in (4). Hence it only makes sense to compare to the idealized estimator... This is also another reason why the lower bound section should be standalone.}
By comparing the upper bound of the idealized estimator in \eqref{ideal_upper1} and \eqref{ideal_upper2} or the upper bound of the cone projected power iteration in \eqref{algo1_error_spiked} with lower bound in Corollary \ref{minimax_coro}, we see that the upper bound is in general moderately larger than the lower bound by a square root and a $\log p$ factor. The $\log p$ factor here is likely and artifact of our proof, which relies on reverse Sudakov minoration. Since we attempt to solve the problem in a very large generality (for any convex cone), our conjecture of the difference between lower and upper bound is that for specific convex cones there might exist more efficient packing sets than the one used in the proof of the lower bound above, thus yielding tighter lower bounds. On the other hand there could also exist more accurate algorithms specialized to the specific cone of interest. Below we will give more detailed comparisons of the upper and lower bounds for some specific examples, and will provide a separate lower bound for the monotone cone. We will illustrate that the seeming sub-optimality of the lower bound is more subtle in these examples, as we will argue that the lower bound and upper bounds nearly match. %{\color{red} I do not see why this monotone cone example gives evidence towards the conjecture?} A specialized example of monotone cone in Section \ref{mnt_cone:subsec} gives evidence to this conjecture by achieving a $\log\log p$ difference between the upper and lower bound.

%%%%%%%%
\subsection{Examples}

\label{case_studies:sec}
Based on the general upper and lower bounds on the estimation error in the previous sections, we study the error rate under three well-structured convex cones, and make comparisons between the upper and lower bounds. Due to the tractable Gaussian complexity of convex cones in those examples, we are able to evaluate the upper and lower bounds only in terms of the sample size $n$, dimension $p$ and eigengap $\nu$, under the spiked covariance model. 

\subsubsection{Non-Negative Orthant}
\label{non_neg:subsec}
The non-negative orthant $K^+ = \{(v_1,...,v_p)\T\in\mathbb{R}^p: 0 \leq v_j,\,\forall j\in [p]\}$ is a convex cone. For the tangent cone $\mathcal{T}_{K^+}(\bx)$ we have $\mathcal{T}_{K^+}(\bx)\subseteq\mathbb{R}^p$, so that $\mathcal{T}_{K^+}(\bx)\bigcap\mathbb{S}^{p-1}\subseteq\mathbb{S}^{p-1}$. Using the fact that $w(\mathbb{S}^{p-1})\asymp\sqrt{p}$ \citep[Example 7.5.7]{vershynin2018high}, we have $w(\mathcal{T}_{K^+}(\bx)\bigcap\mathbb{S}^{p-1})\lesssim\sqrt{p}$. We also derive the order of  $w(K^+\bigcap\mathbb{S}^{p-1})\asymp\sqrt{p}$ in the following Lemma \ref{gw_nonneg:lemma}.
\begin{lemma}
\label{gw_nonneg:lemma}
Let $K^+$ be the $p$-dimensional non-negative orthant $\{(v_1,...,v_p)\T\in\mathbb{R}^p: 0 \leq v_j,\,\forall j\in [p]\}$, and $\mathbb{S}^{p-1}$ be the $p$-dimensional unit sphere. Then $w(K^+\bigcap\mathbb{S}^{p-1})\asymp\sqrt{p}$.
\end{lemma}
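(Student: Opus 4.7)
The plan is to compute $w(K^+ \cap \mathbb{S}^{p-1})$ essentially exactly by exploiting the fact that the optimization over $K^+ \cap \mathbb{S}^{p-1}$ against a Gaussian has a closed form, and then to sandwich it between constant multiples of $\sqrt{p}$.

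First I would fix $g \sim \mathcal{N}(0, I_p)$ and observe that
\begin{align*}
\sup_{t \in K^+ \cap \mathbb{S}^{p-1}} \langle g, t \rangle = \|g_+\|_2,
\end{align*}
where $g_+ \in \mathbb{R}^p$ denotes the coordinatewise positive part of $g$. Indeed, for any $t \in K^+ \cap \mathbb{S}^{p-1}$ we have $\langle g, t \rangle \leq \langle g_+, t \rangle \leq \|g_+\|_2 \|t\|_2 = \|g_+\|_2$ by Cauchy--Schwarz, and equality is attained at $t = g_+/\|g_+\|_2$ (which lies in $K^+ \cap \mathbb{S}^{p-1}$ almost surely). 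Therefore $w(K^+ \cap \mathbb{S}^{p-1}) = \mathbb{E}\|g_+\|_2$, reducing the problem to bounding this scalar expectation.

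For the upper bound, I would note that $K^+ \cap \mathbb{S}^{p-1} \subseteq \mathbb{S}^{p-1}$, so by monotonicity of Gaussian complexity and the standard estimate $w(\mathbb{S}^{p-1}) \asymp \sqrt{p}$ \citep[Example 7.5.7]{vershynin2018high} we get $w(K^+ \cap \mathbb{S}^{p-1}) \lesssim \sqrt{p}$. Alternatively, by Jensen's inequality, $\mathbb{E}\|g_+\|_2 \leq \sqrt{\mathbb{E}\|g_+\|_2^2} = \sqrt{p/2}$ since each coordinate $g_i^2 \mathbbm{1}\{g_i \geq 0\}$ has expectation $1/2$.

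For the lower bound, I would use the deterministic inequality $\|g_+\|_2 \geq \|g_+\|_1/\sqrt{p}$, which holds because $g_+$ has at most $p$ nonzero entries. Taking expectations and using $\mathbb{E}\|g_+\|_1 = p \cdot \mathbb{E}[\max(g_1,0)] = p/\sqrt{2\pi}$ gives
\begin{align*}
w(K^+ \cap \mathbb{S}^{p-1}) = \mathbb{E}\|g_+\|_2 \geq \frac{\mathbb{E}\|g_+\|_1}{\sqrt{p}} = \sqrt{\frac{p}{2\pi}},
\end{align*}
so $w(K^+ \cap \mathbb{S}^{p-1}) \gtrsim \sqrt{p}$. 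Combining both bounds yields the claimed equivalence. There is no real obstacle here; the only subtlety is recognizing that the sup against $g$ over the cone-sphere intersection equals $\|g_+\|_2$, after which both directions follow from elementary norm inequalities and the first and second moments of a truncated Gaussian.
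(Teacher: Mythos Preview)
Your proof is correct and shares its opening move with the paper: both recognize that $\sup_{t\in K^+\cap\mathbb{S}^{p-1}}\langle g,t\rangle=\|g_+\|_2$ (the paper phrases this as $\mathbb{E}\|g'\|_2$ for a mixture random vector $g'$ with the same law as $g_+$). The divergence is in how $\mathbb{E}\|g_+\|_2$ is estimated. You use two one-line moment arguments---Jensen for the upper bound $\mathbb{E}\|g_+\|_2\le\sqrt{p/2}$, and the $\ell_1$--$\ell_2$ comparison $\|g_+\|_2\ge\|g_+\|_1/\sqrt{p}$ together with $\mathbb{E}\|g_+\|_1=p/\sqrt{2\pi}$ for the lower bound. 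The paper instead verifies that each $g_i'$ is sub-Gaussian and then invokes a norm-concentration result \citep[Theorem~3.1.1]{vershynin2018high} to show $\big|\|g'\|_2-\sqrt{p/2}\big|$ has bounded expectation. Your route is more elementary and self-contained; the paper's route yields concentration of $\|g_+\|_2$ around $\sqrt{p/2}$ as a by-product, which is stronger than needed here but requires citing heavier machinery.
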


Suppose the eigengap $\nu$ doesn't scale with $n$. If $p<n$, we will have $\sqrt{\frac{w(K\bigcap\mathbb{S}^{p-1})}{(\nu\wedge\sqrt{\nu})\sqrt{n}}}>\frac{w(\mathcal{T}_K(\bx)\bigcap\mathbb{S}^{p-1})}{(\nu\wedge\sqrt{\nu})\sqrt{n}}$. Under the spiked covariance model, the upper bound for the idealized estimator in \eqref{ideal_upper1} and \eqref{ideal_upper2} reduce to
\begin{align*}
    \mathbb{E}\|\bar{v} - \bx\|\wedge\|\bar{v} + \bx\| \lesssim \frac{\sqrt{p}}{(\nu\wedge\sqrt{\nu})\sqrt{n}}
\end{align*}
The upper bound for the cone projected power iteration algorithm reduces to the same order if $\nu \geq (3 + 2\sqrt{2})\omek$ is satisfied. Then we investigate the lower bound. In the non-negative orthant $K^+$, all vector pairs have a non-negative dot product, so the finer set $K'$ in Corollary \ref{minimax_coro} could be $K^+$ itself. Plug in the fact $w(K^+\bigcap\mathbb{S}^{p-1})\asymp\sqrt{p}$ to Corollary \ref{minimax_coro} to get the following lower bound
\begin{align*}
    \inf_{\hat v} \sup_{\bx \in K^+ \bigcap \mathbb{S}^{p-1}} \mathbb{E} \|\hat v - \bx\|\wedge \|\hat v + \bx\| \gtrsim\frac{\sqrt{p}}{\log p(\nu\wedge\sqrt{\nu})\sqrt{n}}
\end{align*}
which differs up to a $\log p$ factor from the upper bound. Moreover, in a high-dimensional setting $p>n$, we might not be able to produce a consistent estimation of the principal eigenvector $\bx$ if $\bx\in K^+$ is the only information. This is because the ratio $\frac{p}{\log{p}\,n}$ tend to diverge for a large $p$.
% {\color{red} I believe this paragraph but our results do not show this, because of the $\log p$ factor in the denominator. Hence I would not put that here. On the other hand in the upper bound you cam automatically add $\wedge 1$ in the rate.} 
This implies that the non-negative orthant $K^+$ is still too large to be an efficient constraint in high dimensions.

\subsubsection{Subspace}
\label{subsp:subsec}
Another example is an $s$-dimensional subspace $K=\mathbb{R}^s$. It is easy to show that an $s$-dimensional subspace is a convex cone. By rotating the data, without loss of generality we may  assume that $K$ is a set of vectors whose last $p-s$ coordinates are zero, i.e.,  $K=\{\mathbf{v}\in\mathbb{R}^p: v_j = 0,\,\forall j > s\}$. In this case $\mathcal{T}_K(\bx)=K$ for all $\bar x \in \mathbb{R}^p$. Accordingly we have $w(K\bigcap\mathbb{S}^{p-1}) = w(\mathcal{T}_K(\bx)\bigcap\mathbb{S}^{p-1}) = w(\mathbb{S}^{s-1})\asymp\sqrt{s}$ \citep[Example 7.5.7]{vershynin2018high}. Suppose the dimension of subspace $s$ is smaller than the sample size $n$. Thus we have $\sqrt{\frac{w(K\bigcap\mathbb{S}^{p-1})}{(\nu\wedge\sqrt{\nu})\sqrt{n}}} > \frac{w(\mathcal{T}_K(\bx)\bigcap\mathbb{S}^{p-1})}{(\nu\wedge\sqrt{\nu})\sqrt{n}}$. Under the spiked covariance model, the upper bound for the idealized estimator in \eqref{ideal_upper1}, \eqref{ideal_upper2} reduce to
\begin{align}\label{subspace:rate}
    \mathbb{E}\|\bar{v}-\bar{x}\|\wedge\|\bar{v} + \bx\| \lesssim \frac{\sqrt{s}}{(\nu\wedge\sqrt{\nu})\sqrt{n}}
\end{align}
The upper bound for the cone projected power iteration in \eqref{algo1_error_spiked} reduces to the same order if $\nu \geq (3 + 2\sqrt{2})\omek$ is satisfied. We can see that the cone projected power iteration algorithm gives a consistent estimation of the principal eigenvector $\bx$ as long as the right hand side of \eqref{subspace:rate} converges to $0$ as $n$ increases. For the lower bound, in Corollary \ref{minimax_coro}, $K'$ can be chosen as the $s$-dimensional non-negative orthant. By Lemma \ref{gw_nonneg:lemma} we have $w(K'\bigcap\mathbb{S}^{p-1})\asymp\sqrt{s}$, so that we are able to obtain a lower bound as 
\begin{align*}
    \inf_{\hat v} \sup_{\bx \in K \bigcap \mathbb{S}^{p-1}} \mathbb{E} \|\hat v - \bx\|\wedge \|\hat v + \bx\| \gtrsim\frac{\sqrt{s}}{\log p(\nu\wedge\sqrt{\nu})\sqrt{n}}
\end{align*}
which differs up to a $\log p$ factor from the upper bound.

\subsubsection{Monotone Cone}
\label{mnt_cone:subsec}
Consider the case where $M \subset \mathbb{R}^p$ is the monotone cone given by $M = \{(x_1, ..., x_p)\T\in\mathbb{R}^p\,: x_1\leq...\leq x_p \}$.
For a monotone cone $M$ there exist explicit formulas for the Gaussian complexity $w(M \bigcap \mathbb{S}^{p-1})\asymp\sqrt{\log p}$ for large $p$. This is proved in \citet[Section~D.4]{amelunxen2014living}, and also can be calculated numerically through Monte Carlo simulations \citep[Lemma~4.2]{donoho2013accurate}.
For a piecewise constant vector $\bx \in M$ with $m$ constant pieces, the order of tangent cone of $M$ at $\bar{x}$ has an explicit order as $w(\mathcal{T}_{M}(\bx)\bigcap\mathbb{S}^{p-1})\asymp\sqrt{m\log \frac{ep}{m}}$ \citep[Proposition~3.1]{bellec2018sharp}. Suppose now that the number of constant pieces $m$ doesn't scale with $n$, and $\log p<n$. Thus we have $\sqrt{\frac{w(K\bigcap\mathbb{S}^{p-1})}{(\nu\wedge\sqrt{\nu})\sqrt{n}}} > \frac{w(\mathcal{T}_K(\bx)\bigcap\mathbb{S}^{p-1})}{(\nu\wedge\sqrt{\nu})\sqrt{n}}$. Under the spiked covariance model, plug the Gaussian complexity $w(\mathcal{T}_{M}(\bx)\bigcap\mathbb{S}^{p-1})$ into \eqref{ideal_upper1}, \eqref{ideal_upper2} to get a reduced upper bound for the idealized estimator
\begin{align*}
    \mathbb{E}\|\bar{v} - \bx\|\wedge\|\bar{v} + \bx\| \lesssim
    \frac{\sqrt{\log p}}{(\nu\wedge\sqrt{\nu})\sqrt{n}}
\end{align*}
The upper bound for cone projected power iteration in \eqref{algo1_error_spiked} also reduces to the same order if  $\nu \geq (3 + 2\sqrt{2})\omek$ is satisfied. Our upper bound shows that the cone projected power iteration algorithm with monotone cone constraint is able to estimate the principal eigenvector consistently, as long as $\log p \sim o(n)$. We will now exhibit a lower bound for a piecewise constant $\bx$ setting. Similarly to Theorem \ref{minimax}, the proof of Proposition \ref{example_lower} relies on Fano's inequality. However the construction used to obtain a packing set is quite different. We note that the above upper bound above and the lower bound in Proposition \ref{example_lower} are different up to a $\log$ factor.
\begin{proposition}[Minimax Lower Bound for Monotone Cone]
\label{example_lower}
Suppose we have $n$ i.i.d. observations $X_1,...,X_n$ where $X_i\sim \mathcal{N}(0, I + \nu\bar{x}\bar{x}\T )$. Let $\bar{x} \in M$ where $M\subset\mathbb{R}^p$ is a monotone cone. The minimax risk of estimations of $\bx$ based on $X_1,...,X_n$ is bounded below by
\[
    \inf\limits_{\hat{v}}\max\limits_{\bx\in M} 
    \mathbb{E}[\|\hat{v} - \bx\|\wedge\|\hat{v} + \bx\|] 
    \gtrsim
    \frac{\sqrt{\log\log p}}{(\nu\wedge\sqrt{\nu})\sqrt{n}}
\]
\end{proposition}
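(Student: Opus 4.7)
The plan is to apply Fano's method with a local packing in $M \cap \mathbb{S}^{p-1}$ tailored to the monotone cone. Unlike the proof of Theorem \ref{minimax}, which relies on reverse Sudakov and therefore pays a $\log p$ factor, here I exploit the fact that $M$ contains an explicit family of $\asymp \log p$ two-step monotone unit vectors that are $\ell_2$-separated by a universal constant. Turned into a packing of cardinality $L \asymp \log p$ localized around the constant unit vector, Fano's inequality pays only $\log L \asymp \log\log p$, delivering the advertised $\sqrt{\log\log p}$ factor.

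Concretely, fix the base point $\bx_0 := \mathbf{1}/\sqrt{p} \in M \cap \mathbb{S}^{p-1}$. For each $j=1,\ldots, L := \lfloor \log_2(p/2)\rfloor$, construct a mean-zero, unit-norm, monotone two-step vector $w^{(j)}\in M$ taking value $a_j$ on the first $p-2^j$ coordinates and $b_j$ on the last $2^j$ coordinates, with $a_j<0<b_j$ uniquely determined by $\mathbf{1}\T w^{(j)}=0$ and $\|w^{(j)}\|=1$. A short calculation using the mean-zero constraint yields, for $j<k$, $w^{(j)}\T w^{(k)} = \sqrt{2^{j-k}(p-2^k)/(p-2^j)} \leq 2^{-|j-k|/2}$, so that $\|w^{(j)} - w^{(k)}\|^2 \geq 2 - \sqrt{2}$ whenever $|j-k|\geq 1$. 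Setting $\bx^{(j)} := (\bx_0 + \epsilon w^{(j)})/\sqrt{1+\epsilon^2}$ for a small $\epsilon>0$ to be chosen then produces a family in $M \cap \mathbb{S}^{p-1}$ (monotonicity is preserved under nonnegative combination in $M$, and unit-norming is trivial because $w^{(j)}\perp\bx_0$) with pairwise $\ell_2$ distances of order $\epsilon$.

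Next, I bound the pairwise KL divergence between the spiked Gaussian product laws $P_j^{\otimes n} := \mathcal{N}(0, I + \nu\, \bx^{(j)}(\bx^{(j)})\T)^{\otimes n}$. A standard computation gives $\mathrm{KL}(P_j^{\otimes n} \,\|\, P_k^{\otimes n}) = n\nu^2(1-(\bx^{(j)}\T\bx^{(k)})^2)/(2(1+\nu))$; since $1-(\bx^{(j)}\T\bx^{(k)})^2 \lesssim \epsilon^2$ inside the local packing, the KL is at most $Cn(\nu\wedge\nu^2)\epsilon^2$. Fano's inequality then yields a lower bound of order $\epsilon \bigl(1 - [Cn(\nu\wedge\nu^2)\epsilon^2 + \log 2]/\log L\bigr)$. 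Choosing $\epsilon^2 \asymp \log L/(n(\nu\wedge\nu^2))$ keeps the parenthesis bounded below by a positive constant, producing a minimax $\ell_2$-risk lower bound of order $\sqrt{\log L/(n(\nu\wedge\nu^2))} \asymp \sqrt{\log\log p}/((\nu\wedge\sqrt{\nu})\sqrt{n})$. Because the packing is localized around $\bx_0$ and all pairwise inner products are positive and close to $1$, the signed loss $\|\hat v-\bx\|\wedge\|\hat v+\bx\|$ coincides with $\|\hat v-\bx\|$ on this packing after a harmless reduction to $\hat v\T\bx_0\geq 0$ by sign symmetry, so the same lower bound holds for the loss in the statement.

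The main obstacle is engineering the packing: one needs a family in $M\cap \mathbb{S}^{p-1}$ that is simultaneously large (cardinality $\asymp\log p$) and well-separated by a universal constant after subtracting the common base point, while retaining enough structure to uniformly control the pairwise KL divergences. The two-step construction succeeds because (i) the orthogonality $w^{(j)}\perp \mathbf{1}$ makes post-perturbation normalization trivial, (ii) the inner products $w^{(j)}\T w^{(k)}$ collapse into a closed form depending only on $2^{j-k}$ and a ratio bounded between $1/2$ and $1$ (by the restriction $j\leq \log_2(p/2)$), and (iii) monotonicity of $\bx^{(j)}$ is automatic from the sign choice $a_j<0<b_j$ combined with $\bx_0$ being constant.
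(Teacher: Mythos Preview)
Your proposal is correct and follows the same high-level strategy as the paper: build an explicit packing of $M\cap\mathbb{S}^{p-1}$ of cardinality $\asymp\log p$ with pairwise $\ell_2$ separation of order $\epsilon$ and pairwise KL of order $n(\nu\wedge\nu^2)\epsilon^2$, then apply Fano and optimize $\epsilon$. The reduction to the signed loss $\|\hat v-\bx\|\wedge\|\hat v+\bx\|$ via positivity of all pairwise inner products in the packing is also the same as in the paper (Corollary~\ref{minimax_coro}).

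Where you differ is in the packing itself. The paper uses three-piece vectors $a_i=(0,\ldots,0,\epsilon/\sqrt{p-1-i},\ldots,\epsilon/\sqrt{p-1-i},\sqrt{1-\epsilon^2})$ indexed by $i\in\{0,\ldots,p-2\}$ and then extracts an $\epsilon/2$-packing of size $\asymp\log p$ through a separate recursive index-selection argument (Lemma~\ref{k2_packing}). You instead perturb the constant vector $\mathbf{1}/\sqrt{p}$ by mean-zero two-step directions $w^{(j)}$ with dyadic block sizes $2^j$, which directly yields $L=\lfloor\log_2(p/2)\rfloor$ vectors and a closed-form inner product $w^{(j)\intercal}w^{(k)}=\sqrt{2^{j-k}(p-2^k)/(p-2^j)}\le 2^{-|j-k|/2}$. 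Your construction is a bit cleaner: the orthogonality $w^{(j)}\perp\mathbf{1}$ makes the renormalization to the sphere trivial, and the dyadic indexing gives the $\log p$ cardinality for free without an auxiliary packing lemma. Both buy exactly the same rate.
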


%%%%%%%%
% 5. Experiments
%%%%%%%%
\section{Experiments}
\label{experiment:section}
We compare our proposed cone projected power iteration algorithm with the ordinary power iteration \citep{mises1929praktische}, Truncated Power Iteration \citep{yuan2013truncated}, and ElasticNet SPCA \citep{zou2006sparse} on both simulated and real data sets. Truncated Power Iteration \citep{yuan2013truncated}, as the name suggests, is also a power method, but it truncates a certain proportion of coordinates to zero in every iteration to achieve sparsity. ElasticNet SPCA \citep{zou2006sparse} computes the non-sparse principal component first, and obtains the sparse eigenvector by performing elastic net regression of variables on the principal components. According to the simulation results, our algorithm has a smaller estimation error than the other three algorithms in both non-sparse and certain sparse settings of the principal eigenvector. The experiments on real data show that our algorithm provides the most consistent estimation of the principal eigenvector whose direction catches the largest proportion of variance in the test data. Code for experiments can be found in: \url{https://github.com/Pythongoras/ConeProjectedPowerIterV2}.

%%%%%%%%
\subsection{Simulations}
%%% To which algos we compare. 
In this section, we gauge the performance of Algorithm \ref{algo2} by comparing its estimation error and run time with three other algorithms -- ordinary power iteration, Truncated Power Iteration, and ElasticNet SPCA -- on simulated data. 

The data set is generated from a multivariate Gaussian distribution with zero mean and spiked covariance matrix $\Sigma = I + \nu\bx\bx\T$, where $\bx$ is the principal eigenvector. For the non-sparse setting, $\bx$ is formulated as $\bx_i = i/\sqrt{\sum_{j=1}^p j^2}$; for the sparse setting, $\bx$ is formulated as $\{\bx: \bx_i = 0 \text{ if }i \leq \lceil p-10\log p\rceil, \,\,\bx_i = 1 \text{ if }i > \lceil p-10\log p \rceil\}$, where $p$ is the dimension of $\bx$. One can observe that $\bx$ is monotonically increasing in both settings. We examine different values of $p$: $p=100,\,p=1000$, and $p=10000$. For each scale of $p$, we also examine different $n$ values: $10\log p$, $0.3p$, $p$, $5p$ and $10p$. Additionally, we test on both small ($\nu=0.5$) and large ($\nu=\log p)$ eigengaps to make sure the experiments have a reasonable coverage. 
For each combination of $\bx$, $p$, $n$ and $\nu$, the data matrix $X\in\mathbb{R}^{n\times p}$ is generated by drawing $n$ samples from $\mathcal{N}\big( 0, \Sigma \big)$, and all the four algorithms are implemented to estimate the principal eigenvector of the empirical covariance matrix $\hat{\Sigma} = \frac{1}{n}X\T X$. The convex cone used in Algorithm \ref{algo2} is the monotone cone. In this way, Algorithm \ref{algo2} is easy to implement since the projection onto the monotone cone can be done efficiently by isotonic regression.
The stopping criteria $\Delta = 10^{-6}$ is used throughout all power algorithms. The hyperparameters in ElasticNet SPCA and Truncated Power Iteration are tuned by grid search. The average $L_2$ distance of the estimated eigenvector to the true eigenvector $\bx$ for non-sparse $\bx$ case is shown in Fig.~\ref{fig:err_nonsparse}; for sparse $\bx$ case in Fig.~\ref{fig:err_sparse}. The average run time plots are attached in Appendix \ref{app_sim} for conciseness.

\begin{figure}[ht]
\vspace*{-10pt}
% The arguments in the next line are {height}{optional width}{used only by OUP for typesetting}[filename, in directory art]
\resizebox{16cm}{!}{%
\includegraphics{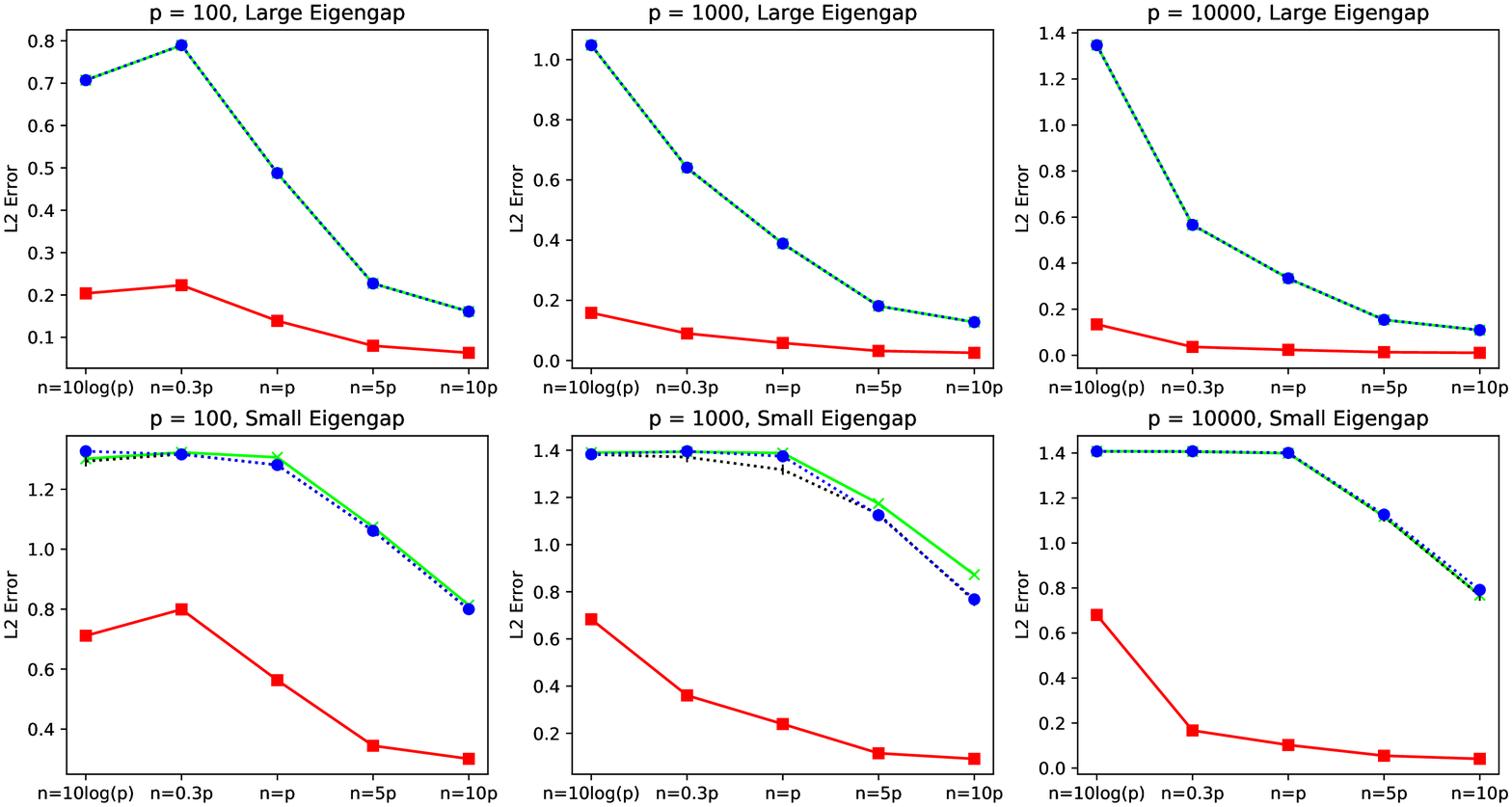}
}
\vspace*{-40pt}
\caption{ The $L_2$ Error for Different Algorithms on Simulated Matrix with Non-sparse principal eigenvector. \\
\protect\tikz{
\protect\draw[black,dotted,line width=0.7pt](0,0) -- (4mm,0);
\protect\draw[black,solid,line width=0.7pt](2mm,-0.1) -- (2mm,0.1)}, 
{\small ordinary power iteration}; 
\protect\tikz{
\protect\draw[green,solid,line width=0.5pt](0,0) -- (5mm,0);
\protect\draw[green,solid,line width=0.5pt](1.3mm,0.1) -- (2.7mm,-0.1);
\protect\draw[green,solid,line width=0.5pt](2.7mm,0.1) -- (1.3mm,-0.1)}, 
{\small Truncated Power Iteration}; 
\protect\tikz{
\protect\draw[blue,dotted,line width=0.7pt](0,0) -- (5mm,0);
\protect\filldraw[blue] (2mm,0) circle (2pt)}, 
{\small ElasticNet SPCA}; 
\protect\tikz{
\protect\draw[red,solid,line width=0.9pt](0,0) -- (5mm,0);
\protect\filldraw[red] (2mm,-0.05) rectangle ++(3pt,3pt)},
{\small Algorithm \ref{algo2}}.
}
\label{fig:err_nonsparse}
\vspace*{-12pt}
\end{figure}

\begin{figure}[ht]
\vspace*{-10pt}
% The arguments in the next line are {height}{optional width}{used only by OUP for typesetting}[filename, in directory art]
\resizebox{16cm}{!}{%
\includegraphics{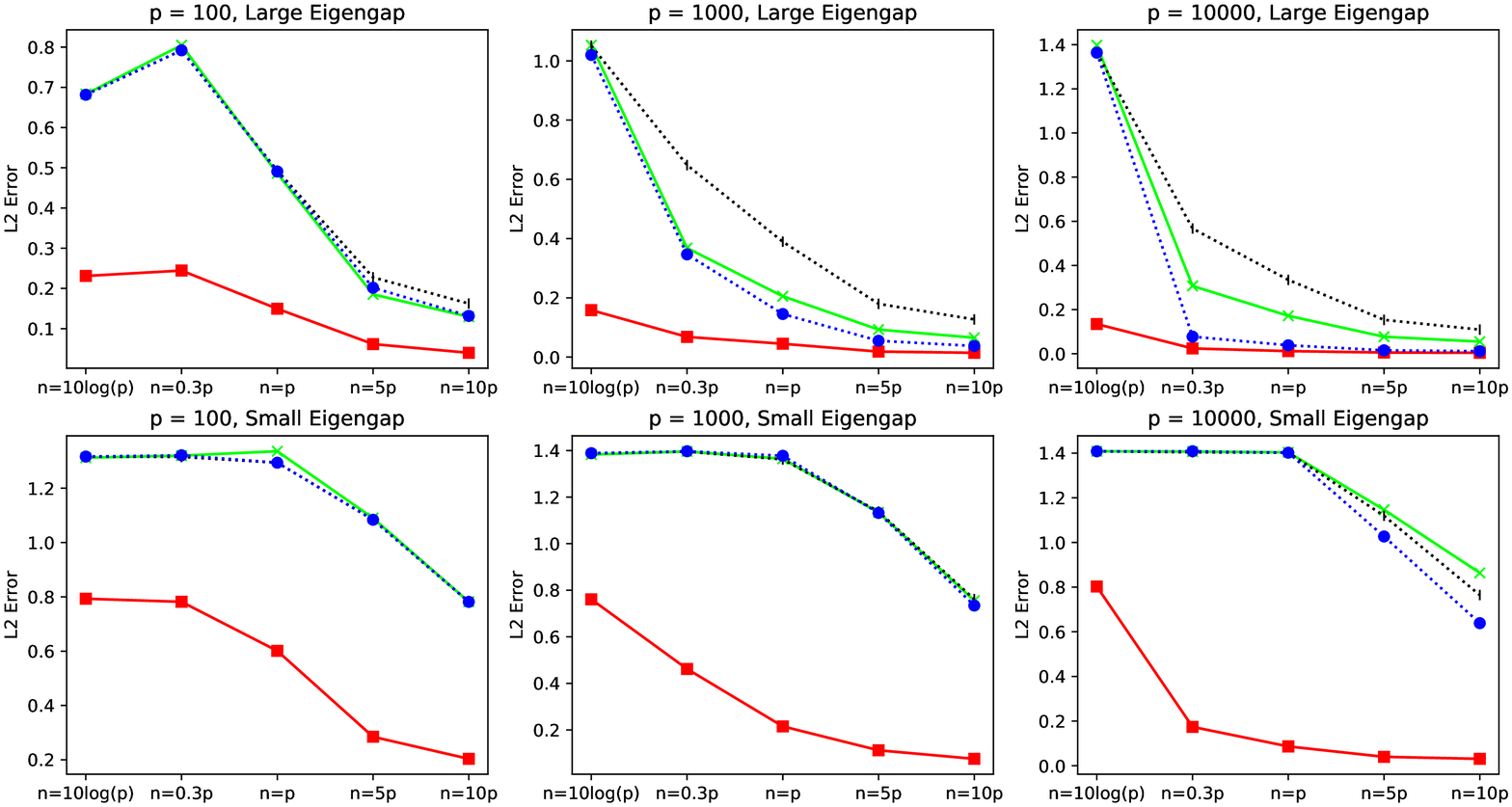}
}
\vspace*{-40pt}
\caption{The $L_2$ Error for Different Algorithms on Simulated Matrix with Sparse principal eigenvector.\\
\protect\tikz{
\protect\draw[black,dotted,line width=0.7pt](0,0) -- (4mm,0);
\protect\draw[black,solid,line width=0.7pt](2mm,-0.1) -- (2mm,0.1)}, 
{\small ordinary power iteration}; 
\protect\tikz{
\protect\draw[green,solid,line width=0.5pt](0,0) -- (5mm,0);
\protect\draw[green,solid,line width=0.5pt](1.3mm,0.1) -- (2.7mm,-0.1);
\protect\draw[green,solid,line width=0.5pt](2.7mm,0.1) -- (1.3mm,-0.1)}, 
{\small Truncated Power Iteration}; 
\protect\tikz{
\protect\draw[blue,dotted,line width=0.7pt](0,0) -- (5mm,0);
\protect\filldraw[blue] (2mm,0) circle (2pt)}, 
{\small ElasticNet SPCA}; 
\protect\tikz{
\protect\draw[red,solid,line width=0.9pt](0,0) -- (5mm,0);
\protect\filldraw[red] (2mm,-0.05) rectangle ++(3pt,3pt)},
{\small Algorithm \ref{algo2}}.
}
\label{fig:err_sparse}
\vspace*{-10pt}
\end{figure}

Based on Fig.~\ref{fig:err_nonsparse} \& \ref{fig:err_sparse}, our Algorithm \ref{algo2} provides a more precise estimation than other three algorithms for both non-sparse and sparse settings. Especially when $n \leq p$ or $\nu$ is small, the other three algorithms fail to converge, but Algorithm \ref{algo2} still achieves a small $L_2$ error. Of course in the non-sparse setting, Algorithm \ref{algo2} is expected to perform better than the sparse algorithms, which coincides with the experiment. Surprisingly, in the sparse setting, the sparse algorithms still converge slower than Algorithm \ref{algo2}. Our conjecture is that the monotone cone is very ``small'', so the projection step pushes the iterated vector towards the true eigenvector very fast. 
In addition, notice that the $L_2$ error of Truncated Power Iteration and ElasticNet SPCA are often very close to that of the ordinary power iteration. This is because the tuned $\lambda$ in ElasticNet SPCA is very small and the tuned cardinality in Truncated Power Iteration is very close to $1$. In such a regime, the two sparse algorithms are actually almost equivalent with the ordinary power iteration.

In terms of the run time, the actual magnitude of run time might not be of interests since they vary on different machines. Nevertheless, in Appendix \ref{app_sim}, the comparison over four algorithms on the same machine still gives out useful insights. Algorithm \ref{algo2} has more advantage when $p$ gets larger.
When $p=100$, Algorithm \ref{algo2} is slower than others, but it is still very fast given the small scale of all run times in this setting.
It is remarkable that the run time of Algorithm \ref{algo2} is even smaller than the ordinary power iteration when $p$ is large. This is due to less iterations Algorithm \ref{algo2} needs to converge, even though in every iteration it is more costly than the ordinary power iteration.

%%%%%%%%
\subsection{Air Quality Data}
In this section we examine Algorithm \ref{algo2}, ordinary power iteration, Truncated Power Iteration and ElasticNet SPCA on an air pollution data. This data source is newly released by \citet{air_data} to support COVID-19 air quality based research. 

Our cleaned data set consists of historical observations of five air pollutant species— PM2.5, PM10, Ozone, CO, SO$_2$ —for several major cities in the world. The observations are collected for the first half of each year from 2015 to 2018 on a daily basis, but are not necessarily consecutive. 
The dimension of the sub-dataset for each air pollutant specie can be found in Table \ref{tab:air_dim}. According to the result of Augmented Dickey-Fuller test \citep[Section 2.7.5]{tsay2005analysis}, each air pollutant specie data set consists of stationary time series. An example time plot of PM2.5 observations in Tokyo is in Figure \ref{fig:tokyo_pm25_all}. 

\begin{table}[ht]
\begin{minipage}[b]{0.3\linewidth}
\centering
\resizebox{6cm}{!}{%
\includegraphics{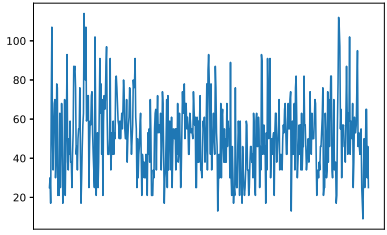}
}
\vspace*{-30pt}
\captionof{figure}{\footnotesize Time Plot of PM2.5 Observations in Tokyo}
\label{fig:tokyo_pm25_all}
\end{minipage}\hfill
\begin{minipage}[b]{0.6\linewidth}
\centering
\resizebox{9cm}{!}{%
\begin{tabular}{lccccc}
& Ozone & PM2.5 & PM10 & CO & SO2 \\[5pt]
number of cities ($n$) &  102 & 156 & 189 & 113 & 114 \\
number of observations ($p$) & 572 & 526 & 580 & 584 & 584
\end{tabular}
}
\vspace*{-6pt}
\captionof{table}{\footnotesize The Dimension of Data Set of Each Pollutant Specie}
\label{tab:air_dim}
\vskip 0.5cm
\resizebox{10cm}{!}{%
\includegraphics{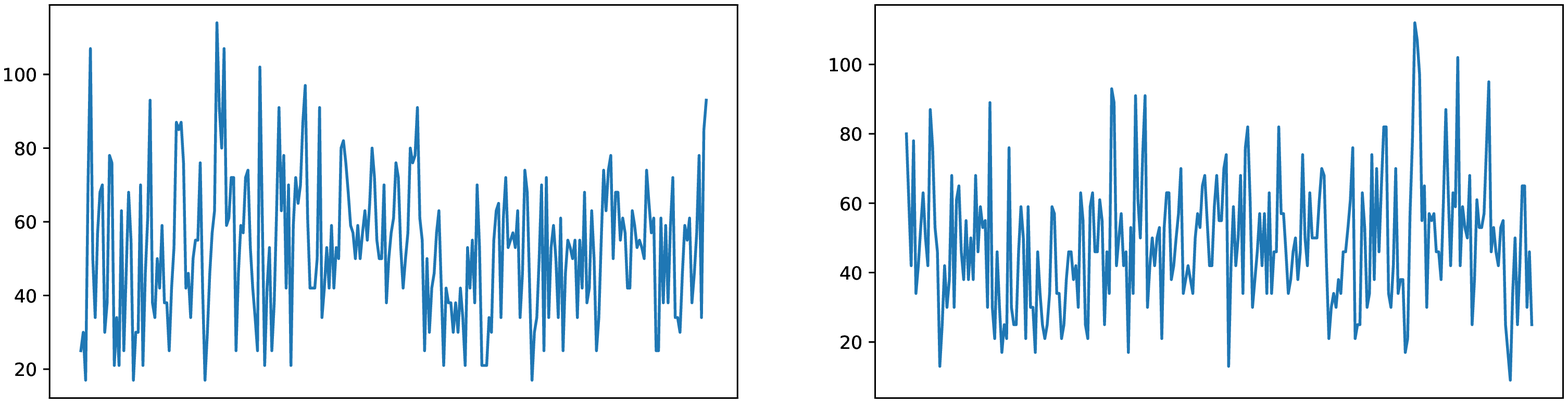}
}
\vspace*{-20pt}
\captionof{figure}{\footnotesize Time Plot of training set(left) and test set(right) of PM2.5 Observations in Tokyo.}
\label{fig:tokyo_pm25_split}
\end{minipage}
\end{table}

% \begin{table}[h]
% \centering
% \resizebox{8cm}{!}{%
% \begin{tabular}{lccccc}
% & Ozone & PM2.5 & PM10 & CO & SO2 \\[5pt]
% number of cities ($n$) &  102 & 156 & 189 & 113 & 114 \\
% number of observations ($p$) & 572 & 526 & 580 & 584 & 584
% \end{tabular}
% }
% \caption{The Dimension of Data Set of Each Pollutant Specie}
% \label{tab:air_dim}
% \end{table}

% \begin{figure}[h]
% \vspace*{-10pt}
% \centering
% \resizebox{6cm}{!}{%
% \includegraphics{fig/tokyo_pm25_all.eps}
% }
% \vspace*{-20pt}
% \caption{\footnotesize The Time Series of PM2.5 Observations in Tokyo}
% \label{fig:tokyo_pm25_all}
% % \vspace*{-10pt}
% \end{figure}

% \begin{figure}[h]
% \vspace*{-10pt}
% \centering
% \resizebox{9cm}{!}{%
% \includegraphics{fig/tokyo_pm25_split.eps}
% }
% \vspace*{-20pt}
% \caption{\footnotesize The Time Series of PM2.5 Observations in Tokyo}
% \label{fig:tokyo_pm25_split}
% % \vspace*{-10pt}
% \end{figure}

\begin{table}[ht]
\centering
\resizebox{15cm}{!}{%
\begin{tabular}{lcccc}
& Algorithm \ref{algo2} with Monotone Cone & Ordinary Power Iteration & Truncated Power Iteration & ElasticNet SPCA
 \\[5pt]
Ozone &  36.05\% & 33.44\% & 33.22\% & 33.43\%  \\
PM2.5 & 58.82\% & 56.62\% & 56.22\% & 56.62\% \\
PM10 & 50.64\% & 46.49\% & 46.02\% & 46.49\% \\
CO & 59.93\% & 54.77\% & 54.53\% & 54.73\% \\
SO2 & 58.64\% & 51.41\% & 51.40\% & 51.41\%
\end{tabular}
}
\caption{\small The Variance Explained by the First Principal Component for Different Algorithms for Air Quality Data}
\label{tab:air_var}
\end{table}

For each pollutant specie data set, we split the data set into two equal-length time series subsets: the train set and test set.
In this way, the train set contains observations of the first half 2015 and 2016; the test set contains observations of the first half 2017 and 2018. As is evident in Figure \ref{fig:tokyo_pm25_split} the train set and test set look similar. Moreover, since the data set consists of stationary time series, we expect the covariance matrices of the training set and test set to be similar. We apply Algorithm \ref{algo2}, ordinary power iteration, Truncated Power Iteration and ElasticNet SPCA on the covariance matrix of the train set to estimate its first principal eigenvector. Then we get four estimated principal eigenvectors corresponding to each algorithm, and construct principal components of test data using the four estimated principal eigenvectors respectively. The four algorithms are evaluated based on a standard criterion: the proportion of variance explained by the first principal component \citep{zou2006sparse}. Table \ref{tab:air_var} lists the results. Algorithm \ref{algo2} constantly beats the other three algorithms in terms of the variance explained by the first principal component.

%%%%%%%%
\subsection{Dissolved Oxygen Data}
In this section we evaluate Algorithm \ref{algo2}, ordinary power iteration, Truncated Power Iteration and ElasticNet SPCA on a water quality data set which contains daily dissolved oxygen observations from 38 surface-water sites in the U.S. from 11/11/2016 to 11/11/2020. The raw data is obtained from \citet{water_data}. The data is not recorded consecutively, so there might be some dates when the observation is not available. The cleaned data set is of the size $38\times 744$. 

Similar to the previous section, in this section the data set is also split equally into the train and test set. The train set contains observations in 2016 and 2017, while the test set contains observations in 2018 and 2019. According to an example time plot of the Site \#2167716 in Figure \ref{fig:oxygen_all} and Figure \ref{fig:oxygen_split}, the data is stationary, and the train and test sets behave similarly. The results of Augmented Dickey-Fuller test \citep{tsay2005analysis} verifies the stationarity of data. Thus the covariance matrices of the train set and test set are expected to be similar. We apply Algorithm \ref{algo2}, ordinary power iteration, Truncated Power Iteration and ElasticNet SPCA to compute the first principal eigenvector of the train covariance matrix. Then four first principal components are constructed on test set using the four estimated principal eigenvectors. We evaluate the four algorithms using the proportion of variance explained by the first principal component \citep{zou2006sparse}. Results can be found in Table \ref{tab:water_var}. 

\begin{figure}[ht]
\begin{minipage}[b]{0.3\linewidth}
\centering
\resizebox{6cm}{!}{%
\includegraphics{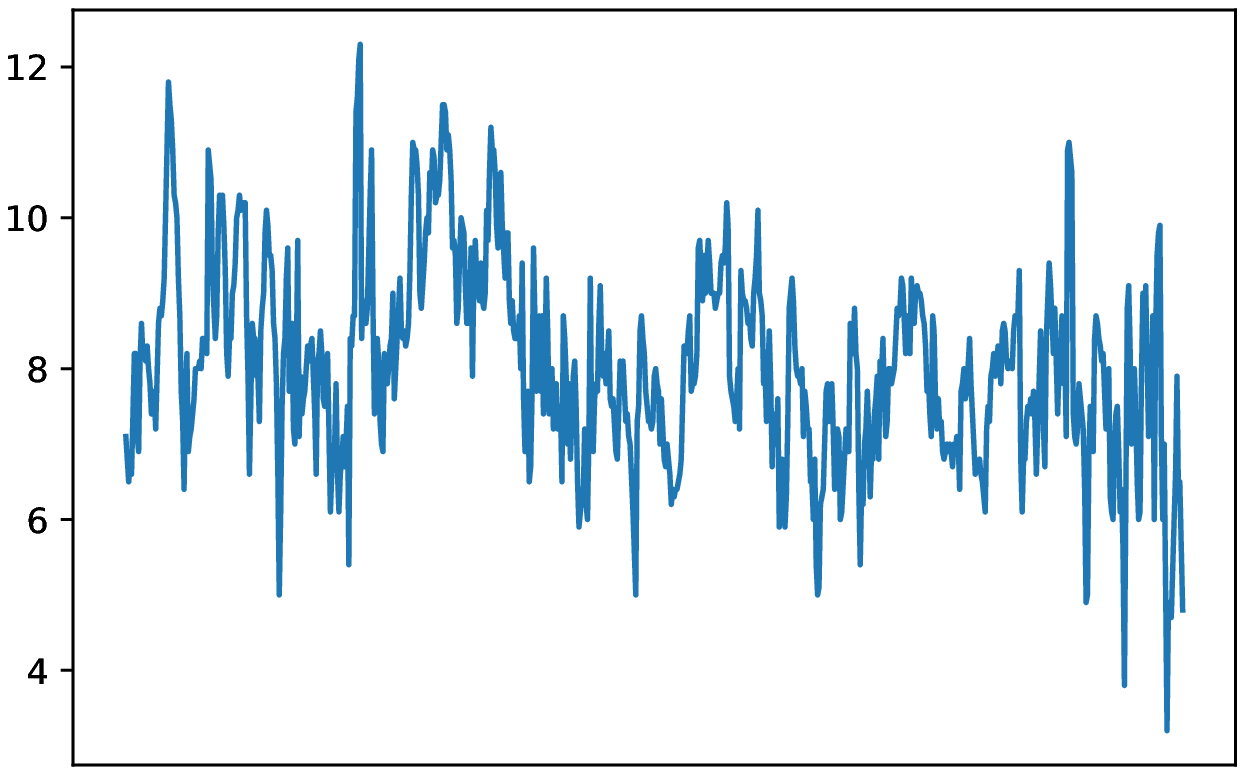}
}
\vspace*{-30pt}
\captionof{figure}{\footnotesize Time Plot of Dissolved Oxygen in Site \#2167716}
\label{fig:oxygen_all}
\end{minipage}\hfill
\begin{minipage}[b]{0.6\linewidth}
\centering
\vspace*{-10pt}
\resizebox{10cm}{!}{%
\includegraphics{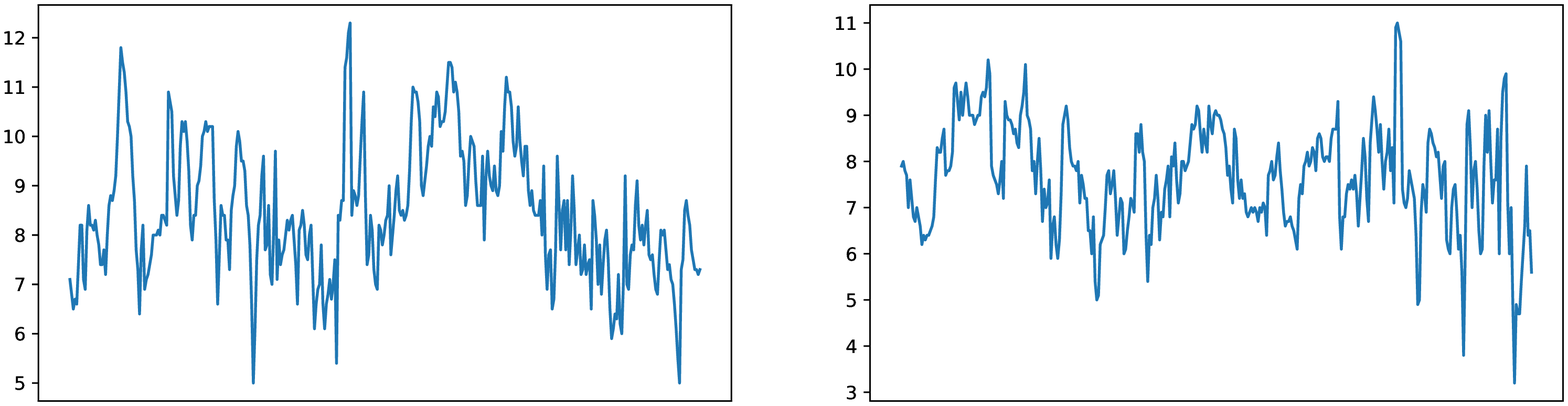}
}
\vspace*{-20pt}
\captionof{figure}{\footnotesize Time Plot of training set(left) and test set(right) of Dissolved Oxygen in Site \#2167716}
\label{fig:oxygen_split}
\end{minipage}
\end{figure}

\begin{table}[ht]
\centering
\resizebox{15.5cm}{!}{%
\begin{tabular}{lcccc}
& Algorithm \ref{algo2} with Monotone Cone & Ordinary Power Iteration & Truncated Power Iteration & ElasticNet SPCA
 \\[5pt]
Explained Variance & 65.74\% & 64.51\% & 64.19\% & 64.26\%
\end{tabular}
}
\caption{\small The Variance Explained by the First Principal Component for Different Algorithms for Dissolved Oxygen Data}
\label{tab:water_var}
\end{table}

%%%%%%%%
% 6. Discussions
%%%%%%%%
\section{Discussions} \label{discussion:sec}
In this paper we propose a cone projected power iteration method to tackle the problem of finding the first principal eigenvector in a positive semidefinite matrix obscured by stochastic noise. Unlike some other high-dimensional PCA methods which require tuning parameters such as sparse PCA, our algorithm is hyperparameter free. To our knowledge, this paper is one of the first to give the time complexity analysis of a cone projected power iteration algorithm. Also it is one of the first to derive an error rate upper bound in a setting of positive semidefinite input matrix, which is much more general than the common spiked covariance model. 

We would like to point out three questions which are worth future investigations. The first is to close the gap between the upper and lower bound of the estimation error. As suggested by the monotone cone example in Section \ref{mnt_cone:subsec}, the packing set of the convex cone might be constructed more efficiently to produce a tighter lower bound. Such an efficient packing set might be a local packing of $\mathbb{B}_{\epsilon}(\mathbf{v}_0)\cap K$ at some point $\mathbf{v}_0\in K$ \citep{cai2020optimal}.
Another question is whether it is possible to extend the cone projected power iteration to estimate more than one principal eigenvectors. If the convex cone constraint $K$ is large enough to contain multiple orthogonal vectors, multiple eigenvectors might be estimated by iteratively applying the cone projected power iteration algorithm. The statistical analysis of the multi-eigenvector case is not trivial. 
%

%Finally, besides the loss function $\|\hat{v}-\bx\|_2\wedge \|\hat{v}+\bx\|_2$ which is used in this paper, there are other popular ones in the PCA literature such as the square of the sine of the angle $1-(\hat{v}\T\bx)^2$, or the projector distance $\|\hat{v}\hat{v}\T- \bx\bx\T\|_F$. Analyzing the cone projected power iteration algorithm with different loss functions might be a direction of the future work.

%{\color{blue} %I am actually curious whether we can remove the log p factor from the lower bound by avoiding reverse Sudakov's. This seems tricky but may be possible somehow.
%Mention Jing's suggestion on tryign to solve something like $\operatorname{trace}(\Sigma vv\T) - pen$ where we relax the problem somehow.

%Also extending the lower bound to match all regimes, i.e., now we only have $n \nu \wedge \nu^2 \geq C^*(K)$ but ideally we should have $n \nu \wedge \nu^2 \geq \omega^2(K \cap S^{p-1})$.}

%%%%%%%%%%%%%
% Acknowledgements
%%%%%%%%%%%%%
% Acknowledgements should go at the end, before appendices and references
%\acks{
%Thanks for everything...
%}

% Manual newpage inserted to improve layout of sample file - not
% needed in general before appendices/bibliography.

\newpage

%%%%%%%%%%%%%
% Appendix
%%%%%%%%%%%%%
\appendix

\section{Additional Simulation Results}
\label{app_sim}
%%%%% 1)
\begin{figure}[H]
% \vspace*{-35pt}
% The arguments in the next line are {height}{optional width}{used only by OUP for typesetting}[filename, in directory art]
\resizebox{16cm}{!}{%
\includegraphics{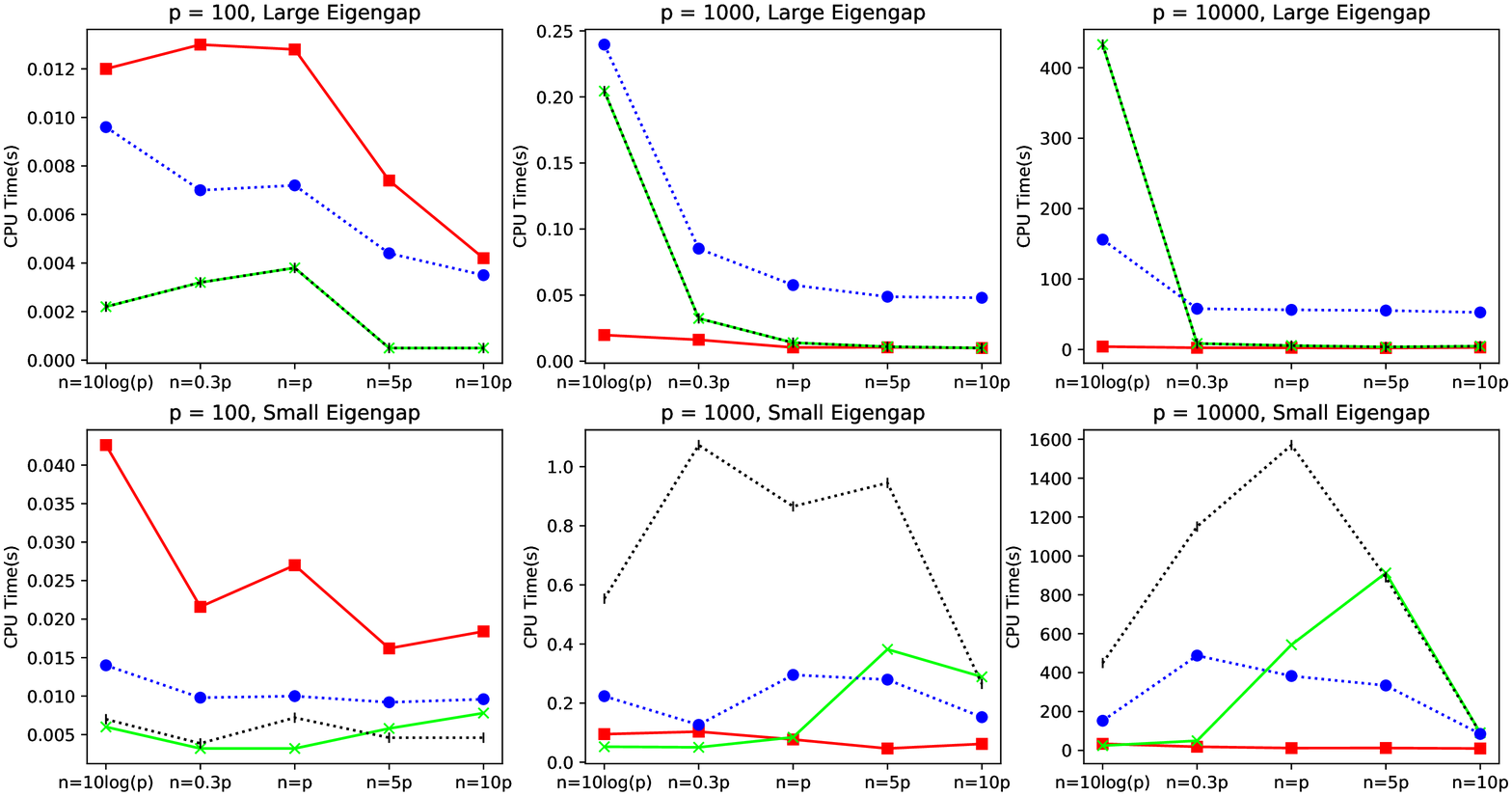}
}
\vspace*{-40pt}
\caption{The Run Time for Different Algorithms on Simulated Matrix with Non-sparse principal eigenvector. \\
\protect\tikz{
\protect\draw[black,dotted,line width=0.7pt](0,0) -- (4mm,0);
\protect\draw[black,solid,line width=0.7pt](2mm,-0.1) -- (2mm,0.1)}, 
{\small ordinary power iteration}; 
\protect\tikz{
\protect\draw[green,solid,line width=0.5pt](0,0) -- (5mm,0);
\protect\draw[green,solid,line width=0.5pt](1.3mm,0.1) -- (2.7mm,-0.1);
\protect\draw[green,solid,line width=0.5pt](2.7mm,0.1) -- (1.3mm,-0.1)}, 
{\small Truncated Power Iteration}; 
\protect\tikz{
\protect\draw[blue,dotted,line width=0.7pt](0,0) -- (5mm,0);
\protect\filldraw[blue] (2mm,0) circle (2pt)}, 
{\small ElasticNet SPCA}; 
\protect\tikz{
\protect\draw[red,solid,line width=0.9pt](0,0) -- (5mm,0);
\protect\filldraw[red] (2mm,-0.05) rectangle ++(3pt,3pt)},
{\small Algorithm \ref{algo2}}.
}
\label{fig:sim1}
\vspace*{-12pt}
\end{figure}

%%%%%%%% 2). 
\begin{figure}[H]
% \vspace*{-40pt}
% The arguments in the next line are {height}{optional width}{used only by OUP for typesetting}[filename, in directory art]
\resizebox{16cm}{!}{%
\includegraphics{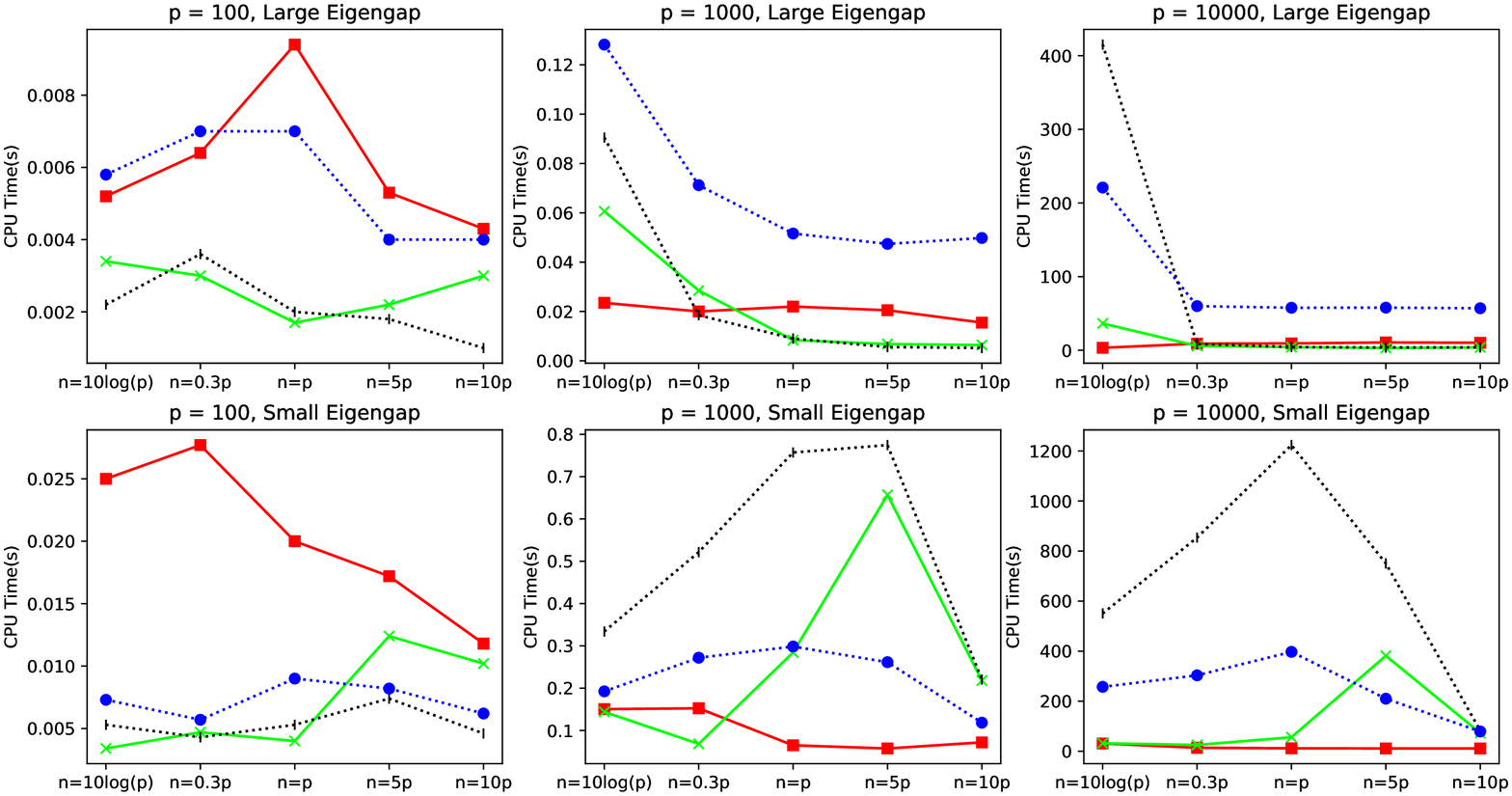}
}
\vspace*{-40pt}
\caption{The Run Time for Different Algorithms on Simulated Matrix with Sparse principal eigenvector.\\
\protect\tikz{
\protect\draw[black,dotted,line width=0.7pt](0,0) -- (4mm,0);
\protect\draw[black,solid,line width=0.7pt](2mm,-0.1) -- (2mm,0.1)}, 
{\small ordinary power iteration}; 
\protect\tikz{
\protect\draw[green,solid,line width=0.5pt](0,0) -- (5mm,0);
\protect\draw[green,solid,line width=0.5pt](1.3mm,0.1) -- (2.7mm,-0.1);
\protect\draw[green,solid,line width=0.5pt](2.7mm,0.1) -- (1.3mm,-0.1)}, 
{\small Truncated Power Iteration}; 
\protect\tikz{
\protect\draw[blue,dotted,line width=0.7pt](0,0) -- (5mm,0);
\protect\filldraw[blue] (2mm,0) circle (2pt)}, 
{\small ElasticNet SPCA}; 
\protect\tikz{
\protect\draw[red,solid,line width=0.9pt](0,0) -- (5mm,0);
\protect\filldraw[red] (2mm,-0.05) rectangle ++(3pt,3pt)},
{\small Algorithm \ref{algo2}}.
}
\label{fig:sim2}
% \vspace*{-1pt}
\end{figure}

%%%%%%%% 3). 
\begin{figure}[H]
% \vspace*{-10pt}
% The arguments in the next line are {height}{optional width}{used only by OUP for typesetting}[filename, in directory art]
\resizebox{16cm}{!}{%
\includegraphics{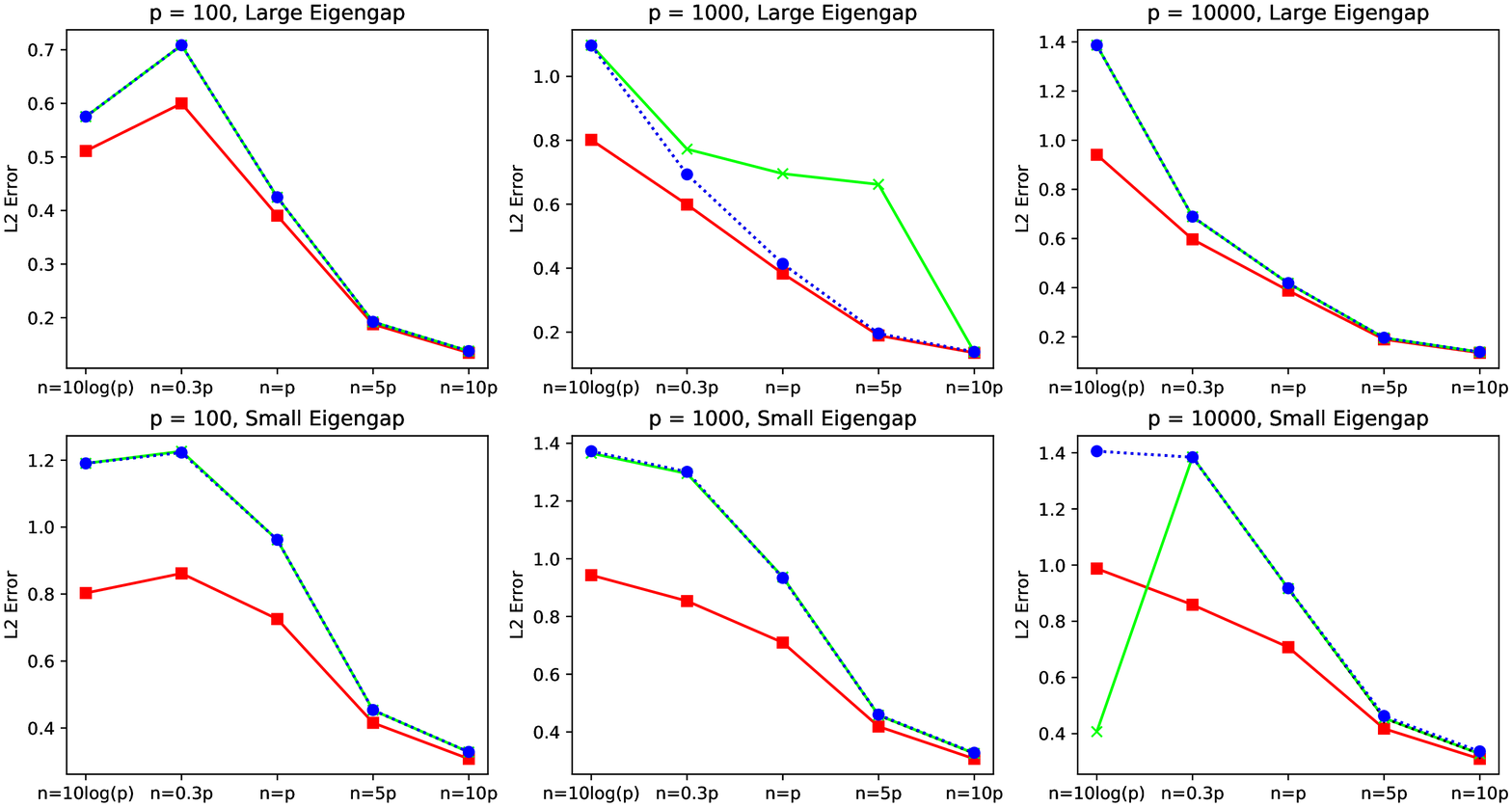}
}
\vspace*{-40pt}
\caption{The $L_2$ Error for Different Algorithms on Simulated Matrix with Sparse principal eigenvector.\\
\protect\tikz{
\protect\draw[black,dotted,line width=0.7pt](0,0) -- (4mm,0);
\protect\draw[black,solid,line width=0.7pt](2mm,-0.1) -- (2mm,0.1)}, 
{\small Ordinary Power Iteration}; 
\protect\tikz{
\protect\draw[green,solid,line width=0.5pt](0,0) -- (5mm,0);
\protect\draw[green,solid,line width=0.5pt](1.3mm,0.1) -- (2.7mm,-0.1);
\protect\draw[green,solid,line width=0.5pt](2.7mm,0.1) -- (1.3mm,-0.1)}, 
{\small Truncated Power Iteration}; 
\protect\tikz{
\protect\draw[blue,dotted,line width=0.7pt](0,0) -- (5mm,0);
\protect\filldraw[blue] (2mm,0) circle (2pt)}, 
{\small ElasticNet SPCA}; 
\protect\tikz{
\protect\draw[red,solid,line width=0.9pt](0,0) -- (5mm,0);
\protect\filldraw[red] (2mm,-0.05) rectangle ++(3pt,3pt)},
{\small Algorithm \ref{algo2} with Positive Cone}.
}
\label{fig:sim3}
% \vspace*{-10pt}
\end{figure}

\section{Preliminary Results}
\label{app:theorem}

%%%%%%%%
\subsection{Constants}\quad 
% c_1 and c_{-1}
Below we define precisely the constants $c_{-1}$ and $c_1$. The intuition of defining such constants can be found in the proof of Lemma \ref{bigger_c0}. Let
\begin{align*}
    c_{\eta} & = \cc, \mbox{ for } \eta \in \{-1,1\}. 
\end{align*}
We have the following simple bounds for $c_{-1}$ and $c_1$. 
\begin{lemma}[Bounds for $c_{\pm 1}$]
\label{lemma:bounds:c1:cm1} 
Assuming that $\nu \geq (3 + 2\sqrt{2}) \|E\|_K$ we have the following relationships
\begin{align*}
    \frac{\|E\|_K}{\lambda} \leq c_{-1} \leq \frac{(3 + 2 \sqrt{2})\|E\|_K}{(1 +  \sqrt{2})\nu} \,\,\,\, \mbox{ and } \quad \frac{2}{5} < \frac{1}{1 + \sqrt{2}} \leq c_1. 
\end{align*}
\end{lemma}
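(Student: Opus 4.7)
The plan is to observe that the formula for $c_\eta$ displays $c_{-1}$ and $c_1$ as the two roots (with $k$ in the macro identified with $\nu$, the eigengap, as can be checked from the non-negativity of the discriminant under the hypothesis $\nu\geq(3+2\sqrt2)\|E\|_K$) of the quadratic
\[
    \nu\,x^{2}-(\nu-\|E\|_K)\,x+\|E\|_K\;=\;0.
\]
Vieta's formulas then give the two relations $c_{-1}+c_{1}=(\nu-\|E\|_K)/\nu$ and $c_{-1}\,c_{1}=\|E\|_K/\nu$, which I will use throughout. All four inequalities in the lemma will follow from these identities together with a short direct bound on $c_{1}$.

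First I will establish the bound on $c_1$. Writing $r=\|E\|_K/\nu$, the assumption $\nu\geq(3+2\sqrt2)\|E\|_K$ is exactly $r\leq 3-2\sqrt2$. Substituting, $c_1(r)=\tfrac12\bigl[(1-r)+\sqrt{1-6r+r^2}\bigr]$. A quick calculation shows $\tfrac{dc_1}{dr}=\tfrac12\bigl[-1+(r-3)/\sqrt{1-6r+r^2}\bigr]<0$ on $[0,\,3-2\sqrt2)$, so $c_1$ is decreasing in $r$ and hence
\[
    c_{1}\;\geq\;c_{1}(3-2\sqrt2)\;=\;\tfrac12(1-(3-2\sqrt2))\;=\;\sqrt2-1\;=\;\tfrac{1}{1+\sqrt2},
\]
with the numerical inequality $\tfrac{1}{1+\sqrt2}>\tfrac{2}{5}$ being immediate. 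I will also record the trivial upper bound $c_{1}\leq 1-r\leq 1$, obtained from $\sqrt{(1-r)^2-4r}\leq 1-r$.

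For the bounds on $c_{-1}$ I will use Vieta's product identity in the form $c_{-1}=(\|E\|_K/\nu)/c_{1}$. The lower bound follows from $c_{1}\leq 1$ together with $\nu\leq\lambda$: indeed
\[
    c_{-1}\;=\;\frac{\|E\|_K}{\nu\,c_{1}}\;\geq\;\frac{\|E\|_K}{\nu}\;\geq\;\frac{\|E\|_K}{\lambda}.
\]
The upper bound uses the lower bound on $c_{1}$ just proved:
\[
    c_{-1}\;=\;\frac{\|E\|_K}{\nu\,c_{1}}\;\leq\;(1+\sqrt2)\,\frac{\|E\|_K}{\nu}\;=\;\frac{(3+2\sqrt2)\,\|E\|_K}{(1+\sqrt2)\,\nu},
\]
where the last equality is the algebraic identity $1+\sqrt2=(3+2\sqrt2)/(1+\sqrt2)$, which is easy to check by multiplying out.

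The only step that requires more than symbol pushing is the monotonicity (or, equivalently, a direct inequality) used to lower bound $c_{1}$; once that is in hand, Vieta's formulas do all the remaining work, so I do not anticipate any genuine obstacle.
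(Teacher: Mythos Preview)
Your proof is correct. The paper takes a slightly different, more direct route: it rewrites $c_{-1}$ in the rationalized form
\[
    c_{-1}=\frac{2\|E\|_K}{(\nu-\|E\|_K)+\sqrt{(\nu-\|E\|_K)^2-4\nu\|E\|_K}}
\]
and then obtains both bounds on $c_{-1}$ by bounding the denominator (using $0\leq\sqrt{D}\leq\nu-\|E\|_K$) together with $\lambda\geq\nu$ and the hypothesis $\nu\geq(3+2\sqrt2)\|E\|_K$. For $c_1$ the paper simply drops the $\sqrt{D}$ term to get $c_1\geq(\nu-\|E\|_K)/(2\nu)$ and again invokes the hypothesis. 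Your approach instead first bounds $c_1$ via monotonicity in $r=\|E\|_K/\nu$ and then transfers those bounds to $c_{-1}$ through Vieta's product identity $c_{-1}c_1=\|E\|_K/\nu$. Both arguments are elementary; the paper's avoids the derivative computation, while yours makes the dependence between the two roots explicit and is arguably cleaner once the $c_1$ bound is in hand.
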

\begin{proof}[Proof of Lemma \ref{lemma:bounds:c1:cm1}]
Note that $c_{-1},c_1$ are well defined when $\nu \geq (3 + 2\sqrt{2})\|E\|_K$. Furthermore the following holds:
\begin{align*}
\frac{\|E\|_K}{\lambda} \leq \frac{\|E\|_K}{\nu-\|E\|_K} & \leq \frac{2 \|E\|_K}{\nu-\|E\|_K + \sqrt{(\nu-\|E\|_K)^2 - 4 \|E\|_Kk}} = c_{-1},
\\ & \leq \frac{2\|E\|_K}{\nu - \|E\|_K} \leq \frac{(3 + 2 \sqrt{2})\|E\|_K}{(1 +  \sqrt{2})\nu}, 
\end{align*}
In addition 
\begin{align*}
    c_{1} \geq \frac{k - \omek}{2\nu} > \frac{1 + \sqrt{2}}{3 + 2\sqrt{2}} = \frac{1}{1 + \sqrt{2}}.
\end{align*}
\end{proof}

\begin{lemma}\label{simple:v0:lemma} Under the assumption of Lemma \ref{lemma:bounds:c1:cm1}, if $v_0\T \bar x > c_{-1}$ it follows that $v_0\T A v_0 > 0$.
\end{lemma}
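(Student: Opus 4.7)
The plan is to argue by contradiction, exploiting the positive semi-definiteness of $A$ together with the quantitative bound $c_{-1} \geq \|E\|_K / \lambda$ supplied by Lemma \ref{lemma:bounds:c1:cm1}. Since $A \succeq 0$, we always have $v_0\T A v_0 \geq 0$, so strict positivity reduces to ruling out the single case $v_0\T A v_0 = 0$. For a PSD matrix this is equivalent to $A v_0 = 0$, because $v_0\T A v_0 = \|A^{1/2} v_0\|^2$, so I would set up the contradiction by assuming $A v_0 = 0$.

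Next I will rewrite $A v_0 = 0$ as $\bar A v_0 = -E v_0$ and take the inner product with the leading eigenvector $\bar{x}$. Using $\bar A \bar{x} = \lambda \bar{x}$, the left hand side becomes $\lambda (v_0\T \bar{x})$, while the right hand side equals $-\bar{x}\T E v_0$. Since both $\bar{x}$ and $v_0$ lie in $K \bigcap \mathbb{S}^{p-1}$, the very definition of the cone-restricted operator norm yields $|\bar{x}\T E v_0| \leq \|E\|_K$. Putting these together gives $\lambda (v_0\T \bar{x}) \leq \|E\|_K$.

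Finally I will invoke the lower bound $c_{-1} \geq \|E\|_K / \lambda$ from Lemma \ref{lemma:bounds:c1:cm1}, so that the hypothesis $v_0\T \bar{x} > c_{-1}$ immediately implies $\lambda (v_0\T \bar{x}) > \lambda c_{-1} \geq \|E\|_K$, contradicting the inequality just established. This contradiction forces $v_0\T A v_0 > 0$.

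The only subtle point, which I would flag but which is not a genuine obstacle, is the implicit assumption that $v_0 \in K \bigcap \mathbb{S}^{p-1}$; this is what allows the cone-restricted norm to control $|\bar{x}\T E v_0|$, and it is consistent with the way the lemma is used downstream (for instance in the discussion around Theorem \ref{bad_vt}, where candidate initial vectors are drawn from a $\delta$-packing of $K \bigcap \mathbb{S}^{p-1}$). Apart from that, the argument is essentially three lines of linear algebra, with all the quantitative work absorbed into Lemma \ref{lemma:bounds:c1:cm1}.
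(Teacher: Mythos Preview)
Your proof is correct and uses essentially the same ingredients as the paper: both arguments hinge on showing $\bar{x}\T A v_0 \geq \lambda(v_0\T\bar{x}) - \|E\|_K > \lambda c_{-1} - \|E\|_K \geq 0$ and then invoking the positive semidefiniteness of $A$. The only cosmetic difference is that the paper concludes directly via Cauchy--Schwarz ($\bar{x}\T A v_0 > 0$ forces $v_0\T A v_0 > 0$ since $|\bar{x}\T A v_0| \leq \sqrt{\bar{x}\T A \bar{x}}\sqrt{v_0\T A v_0}$), whereas you phrase the same implication as its contrapositive ($v_0\T A v_0 = 0 \Rightarrow A v_0 = 0 \Rightarrow \bar{x}\T A v_0 = 0$); your flagging of the implicit assumption $v_0 \in K \cap \mathbb{S}^{p-1}$ is apt, and the paper's proof silently relies on it as well.
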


\begin{proof}[Proof of Lemma \ref{simple:v0:lemma}] Consider $\bar x\T A v_0 \geq \lambda v_0\T \bar x - \|E\|_K > \lambda c_{-1} - \|E\|_K > 0$ by Lemma \ref{lemma:bounds:c1:cm1}. Hence by Cauchy-Schwartz $v_0\T A v_0 > 0$.
\end{proof}

%%%%%%%%
\subsection{Useful Tools}\quad 
% Projection and Moreau's decomposition
The following theorem states that any vector $z\in\mathbb{R}^p$ can be decomposed as the sum of its projection onto $K$, and its projection onto $K$'s polar cone $K^\circ = \{u: \langle u,v \rangle \leq 0, \, \forall v\in K\}$. It is a fundamental result in convex analysis. 
\begin{theorem}[
\citep{moreau1962decomposition}
(Moreau's Decomposition)]\label{moreausthm}
Let $K\subset\mathbb{R}^p$ be a convex cone and $K^o$ be its polar cone. For $x, y, z \in\mathbb{R}^p$, the following properties are equivalent:
\begin{enumerate}
    \item $z = x + y$, $x\in K$, $y\in K^\circ$, and $\langle x, y \rangle = 0$. 
    \item $x=\Pi_K z$, $y=\Pi_{K^\circ}z$.
\end{enumerate}
\end{theorem}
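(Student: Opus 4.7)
The plan is to establish each implication separately, relying on the variational characterization of the metric projection onto a closed convex cone. Since $K$ is a closed convex cone, so is $K^\circ$, hence both $\Pi_K z$ and $\Pi_{K^\circ} z$ are well defined and single-valued, so the statement is meaningful to begin with.

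For the direction $(2) \Rightarrow (1)$, I would start from the standard KKT optimality condition for projection onto a convex set: $x = \Pi_K z$ if and only if $x \in K$ and $\langle z - x, v - x\rangle \leq 0$ for every $v \in K$. Since $K$ is a cone, both $0$ and $2x$ lie in $K$, and substituting these two test directions into the variational inequality forces $\langle z - x, x\rangle = 0$. Plugging this equality back into the variational inequality gives $\langle z - x, v\rangle \leq 0$ for every $v \in K$, which is exactly the statement $z - x \in K^\circ$. Setting $y := z - x$ therefore yields $z = x + y$ with $x \in K$, $y \in K^\circ$, and $\langle x, y\rangle = 0$, establishing all four conditions in (1). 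To close the loop I still need to verify that this $y$ agrees with $\Pi_{K^\circ} z$; this follows from the expansion
\begin{equation*}
\|z - w\|^2 \;=\; \|x\|^2 + 2\langle x, y - w\rangle + \|y - w\|^2 \;=\; \|x\|^2 - 2\langle x, w\rangle + \|y - w\|^2 \;\geq\; \|x\|^2 \;=\; \|z - y\|^2
\end{equation*}
valid for every $w \in K^\circ$, where I use $\langle x, y\rangle = 0$ and $\langle x, w\rangle \leq 0$ (because $x \in K$, $w \in K^\circ$).

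For the direction $(1) \Rightarrow (2)$, I would use a direct algebraic expansion. Given $z = x + y$ with $x \in K$, $y \in K^\circ$, $\langle x, y\rangle = 0$, for any $v \in K$,
\begin{equation*}
\|z - v\|^2 \;=\; \|x - v\|^2 + 2\langle x - v, y\rangle + \|y\|^2 \;=\; \|x - v\|^2 - 2\langle v, y\rangle + \|y\|^2 \;\geq\; \|y\|^2 \;=\; \|z - x\|^2,
\end{equation*}
where the inequality uses $\langle v, y\rangle \leq 0$. Uniqueness of the projection onto the closed convex set $K$ then forces $x = \Pi_K z$, and an entirely symmetric expansion around $y$ (interchanging the roles of $K$ and $K^\circ$) gives $y = \Pi_{K^\circ} z$.

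The main obstacle is conceptually minor: the entire argument hinges on being able to use $v = 0$ and $v = 2x$ as test points in the variational inequality for projection onto $K$, which is exactly where the cone structure (as opposed to mere convexity) enters and produces the orthogonality $\langle x, y\rangle = 0$. Once that orthogonality is secured, both directions collapse to one-line expansions of $\|z - v\|^2$ and $\|z - w\|^2$ together with the defining sign condition $\langle K, K^\circ\rangle \leq 0$, and no further machinery (such as the bipolar theorem) is required.
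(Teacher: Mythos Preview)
The paper does not prove this statement; Theorem~\ref{moreausthm} is quoted from \citep{moreau1962decomposition} as a preliminary tool and is used without proof. Your argument is correct and is the standard one: the crucial step is exactly the one you identify, namely testing the projection variational inequality at $v=0$ and $v=2x$ to force $\langle x, z-x\rangle = 0$, after which both implications reduce to one-line expansions of $\|z-v\|^2$ combined with the sign condition $\langle K, K^\circ\rangle \le 0$. One minor remark on presentation: in your $(2)\Rightarrow(1)$ direction you derive all of $(1)$ from $x=\Pi_K z$ alone and then separately verify $z-x=\Pi_{K^\circ}z$; strictly speaking that last verification is not needed for $(2)\Rightarrow(1)$ (since $y=\Pi_{K^\circ}z$ is already assumed), but it does no harm and in fact shows the stronger fact that $x=\Pi_K z$ by itself already determines $y$.
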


% Useful inequalities 
The following Lemma provides some equations and inequalities which will be referred a lot in consequent sections.
\begin{lemma}\label{ineqs} Suppose that the vectors $v_t$ are recursively defined as $v_t =
\frac{\Pi_KAv_{t-1}}
{\|\Pi_KAv_{t-1}\|}$.
\begin{enumerate}
    \item $\|\Pi_KAv_{t-1}\| = v_t\T Av_{t-1}$.
    \item $\forall x\in\mathbb{R}^p$, $x\T \bar{A}x \leq (\bar{x}\T x)^2\lambda + [1-(\bar{x}\T x)^2]\mu$.
    \item $\forall v \in K$ we have $|v\T Ev - \bx\T E\bx| \leq 4\|v-\bx\|\omtan$ and $|v\T Ev - \bx\T E\bx| \leq 4\|v+\bx\|\omek$.
    \item $v_t\T Av_{t-1} \leq v_t\T Av_t$.
\end{enumerate}
\end{lemma}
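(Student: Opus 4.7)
The plan is to prove the four claims separately, as they are largely independent; the only unified theme is that we exploit Moreau's decomposition (Theorem \ref{moreausthm}) together with the spectral decomposition of $\bar A$ and the definitions of $\|\cdot\|_K$, $\|\cdot\|_{\mathcal{T}_K(\bar x)}$. Throughout I read $x, v$ in claims 2--3 as unit vectors (as is the case in all uses of the lemma elsewhere in the paper), since otherwise claim 2 would fail by rescaling.

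For claim 1, I would apply Moreau's decomposition to $A v_{t-1}$: writing $Av_{t-1} = \Pi_K A v_{t-1} + \Pi_{K^\circ} A v_{t-1}$ with the two summands orthogonal, multiplying on the left by $(\Pi_K A v_{t-1})^\intercal$ kills the polar part, giving $(\Pi_K A v_{t-1})^\intercal A v_{t-1} = \|\Pi_K A v_{t-1}\|^2$. Dividing by $\|\Pi_K A v_{t-1}\|$ produces $v_t^\intercal A v_{t-1} = \|\Pi_K A v_{t-1}\|$. For claim 2, I would diagonalize $\bar A = \lambda \bar x \bar x^\intercal + \sum_{i\geq 2} \lambda_i u_i u_i^\intercal$ with $\lambda_i \leq \mu$ and orthonormal $\{\bar x, u_2, \ldots, u_p\}$, expand $x^\intercal \bar A x = \lambda (\bar x^\intercal x)^2 + \sum_{i \geq 2} \lambda_i (u_i^\intercal x)^2$, and upper bound the tail sum by $\mu \sum_{i\geq 2}(u_i^\intercal x)^2 = \mu(1 - (\bar x^\intercal x)^2)$ using $\|x\|=1$ and Parseval.

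Claim 3 splits into the two bounds via two different algebraic identities. For the tangent-cone bound, set $w = v - \bar x$ and expand $v^\intercal E v - \bar x^\intercal E \bar x = 2 w^\intercal E \bar x + w^\intercal E w$ using that $E$ is symmetric. The key geometric fact is that $w \in \mathcal{T}_K(\bar x)$ by the very definition of the tangent cone, and also $\bar x \in \mathcal{T}_K(\bar x)$ (take $t=1, v=2\bar x \in K$). Normalizing and applying the definition of $\|E\|_{\mathcal{T}_K(\bar x)}$ bounds each term; using $\|w\|\leq 2$ (unit $v, \bar x$) absorbs the quadratic into a linear term, giving the constant $4$. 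For the second bound, I would instead use $v^\intercal E v - \bar x^\intercal E \bar x = (v+\bar x)^\intercal E (v+\bar x) - 2\bar x^\intercal E (v+\bar x)$, noting that $v + \bar x \in K$ since $K$ is a convex cone, so both terms can be bounded by $\|E\|_K$ after normalization; again $\|v+\bar x\|\leq 2$ collapses the square into $\|v+\bar x\|$, yielding constant $4$.

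Finally, for claim 4 I would first show that $v_{t-1}^\intercal A v_{t-1} \leq v_t^\intercal A v_{t-1}$. Write $Av_{t-1} = \Pi_K A v_{t-1} + \Pi_{K^\circ} A v_{t-1}$; since $v_{t-1}\in K$ and $\Pi_{K^\circ}Av_{t-1}\in K^\circ$, the inner product $v_{t-1}^\intercal \Pi_{K^\circ}Av_{t-1}\leq 0$, hence $v_{t-1}^\intercal A v_{t-1}\leq v_{t-1}^\intercal \Pi_K Av_{t-1} \leq \|\Pi_K Av_{t-1}\| = v_t^\intercal A v_{t-1}$ by Cauchy--Schwarz and claim 1. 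Next, since $A$ is positive semidefinite, $(\cdot,\cdot)_A = \cdot^\intercal A\cdot$ is a positive semidefinite bilinear form, so the Cauchy--Schwarz inequality gives $(v_t^\intercal A v_{t-1})^2 \leq (v_t^\intercal A v_t)(v_{t-1}^\intercal A v_{t-1})$. Combining with the previous step, $(v_t^\intercal A v_{t-1})^2 \leq (v_t^\intercal A v_t)(v_t^\intercal A v_{t-1})$, and dividing by $v_t^\intercal A v_{t-1}$ (which is positive since $\|\Pi_K Av_{t-1}\|>0$ in the regime of interest) yields the claim. The only subtlety I anticipate is the handling of the degenerate case $\|\Pi_K Av_{t-1}\|=0$, where $v_t$ is undefined and the algorithm would have terminated, so the inequality is vacuous.
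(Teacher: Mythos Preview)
Your proposal is correct and matches the paper's proof almost exactly for claims 1--3: the same Moreau decomposition for claim 1, the same orthogonal decomposition of $x$ along $\bar x$ and $\bar x^\perp$ for claim 2 (you phrase it via the full spectral expansion, the paper uses a single orthogonal complement, but the content is identical), and the same two polarization identities $v^\intercal Ev - \bar x^\intercal E\bar x = (v-\bar x)^\intercal E(v-\bar x) + 2\bar x^\intercal E(v-\bar x)$ and $v^\intercal Ev - \bar x^\intercal E\bar x = (v+\bar x)^\intercal E(v+\bar x) - 2\bar x^\intercal E(v+\bar x)$ for claim 3, together with $\|v\pm\bar x\|\leq 2$.

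For claim 4 the ingredients are the same---Cauchy--Schwarz for the PSD form $A$ plus the Moreau-based inequality $v_{t-1}^\intercal Av_{t-1}\leq v_{t-1}^\intercal\Pi_K Av_{t-1}\leq\|\Pi_K Av_{t-1}\|=v_t^\intercal Av_{t-1}$---but you assemble them in a self-contained way, whereas the paper instead invokes the monotonicity $v_{t-1}^\intercal Av_{t-1}\leq v_t^\intercal Av_t$ already established in the proof of Proposition~\ref{algo_complexity} and plugs it into $v_t^\intercal Av_{t-1}\leq\sqrt{(v_t^\intercal Av_t)(v_{t-1}^\intercal Av_{t-1})}$. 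Your route has the small advantage of not creating a forward reference to Proposition~\ref{algo_complexity}.
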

\begin{proof}[Proof of Lemma \ref{ineqs}]\quad 
\begin{enumerate}
    \item 
    \begin{align*}
        v_t\T Av_{t-1} & = \frac{\langle 
        \Pi_KAv_{t-1}, Av_{t-1}
        \rangle}{\|\Pi_KAv_{t-1}\|}
        = 
        \frac{\langle 
        \Pi_KAv_{t-1}, \Pi_KAv_{t-1}
        \rangle + 
        \langle 
        \Pi_KAv_{t-1}, \Pi_{K^\circ}Av_{t-1}
        \rangle
        }{\|\Pi_KAv_{t-1}\|}\\
        & = \|\Pi_KAv_{t-1}\|
        \qquad \qquad 
        \text{by Moreau's Decomposition}
        \end{align*}
    \item 
    One can write $x$ as $x = (\bx\T x)\bx + \sqrt{1-(\bx\T x)^2}\bx^{\perp}$. Then
    \begin{align*}
    x\T\bar{A}x = (\bx\T x)^2\lambda + 
    [1-(\bx\T x)^2]\bx^{\perp\intercal}\bar{A}\bx^{\perp} \leq
    (\bar{x}\T x)^2\lambda + [1-(\bar{x}\T x)^2]\mu
    \end{align*}
    \item On one hand
    \begin{align*}
        |v\T Ev - \bx\T E\bx| & = |(v-\bx)\T E(v-\bx)+2\bx\T E(v-\bx)|\\
        & \leq \|v-\bx\|^2 \omtan + 2\|v-\bx\|\omtan\\
        & \leq 4\|v-\bx\|\omtan.
    \end{align*}
    On the other hand, 
    \begin{align*}
    |v\T Ev - \bx\T  E\bx| \leq |2\bx\T  E(v + \bx)| + |(v + \bx)\T  E(v + \bx)|.
\end{align*}
Since $v + \bx \in K$ it follows that $\frac{v + \bx}{\|v + \bx\|} \in K$ and therefore
\begin{align*}
    |v\T Ev - \bx\T  E\bx| \leq 2\|v + \bx\|\|E\|_K + \|v + \bx\|^2 \|E\|_K \leq 4\|v + \bx\|\|E\|_K,
\end{align*}
    
\item By Cauchy-Schwartz inequality we have
\begin{align*}
    v_t\T Av_{t-1} \leq \sqrt{(v_t\T Av_t)(v_{t-1}\T A v_{t-1})} \leq v_t\T Av_t,
\end{align*}
where $v_t\T Av_t\geq v_{t-1}\T A v_{t-1}$ comes from the first part of the proof of Proposition 1.
\end{enumerate}
\end{proof}

%%%%%%%%
%%%%%%%%
\section{Proof of Lemma \ref{tan_gw}}

There are two definitions we need to introduce shortly. The first is the Orlicz norm $\|\cdot\|_{\psi_\alpha}$ of a random variable for $1\leq\alpha\leq2$
\begin{align*}
    \|X\|_{\psi_\alpha} = \inf\{ C > 0: \, \mathbb{E}\exp(|X/C|^{\alpha}) \leq 2 \}
\end{align*}
For a function $f$ defined on the probability space, Having a finite Orlicz norm is equivalent with displaying a tail behavior dominated by an exponential tail bound. If $\|X\|_{\psi_\alpha}<\infty$, one can show that $\mathbb{P}(|X|>t) \leq 2\exp(-ct^{\alpha}/\|X\|_{\psi_\alpha}^{\alpha})$ for $c>0, t\geq 1$. On the other hand, given $\mathbb{P}(|X|>t) \leq 2\exp(-t^{\alpha}/C^{\alpha})$, one can show that $\|X\|_{\psi_\alpha}\leq c_1C$ for $c_1>0, C>0$. We say that $X$ is a sub-Exponential random variable if $\|X\|_{\psi_1}<\infty$; a sub-Gaussian random variable if $\|X\|_{\psi_2}<\infty$.

The second definition is the Talagrand's $\gamma_2$ functional $\gamma_2(T,d)$, which measures the complexity of set $T$ with respect to metric $d$. The quantity $\gamma_2(T,d)$ provides a link to the Gaussian Complexity of $T$. The complete definition of $\gamma_2(T,d)$ can be found in \citet[Definition 2.4]{mendelson2010empirical}. Here we only need to know two properties of it
\begin{lemma}
\label{tala_lemmas}
\begin{enumerate}
    \item For some constant $k, C$, if $f: (T,d)\rightarrow(U, d')$ is onto and satisfies $d'(f(x), f(y)) \leq C d(x,y), \,\,\forall x,y\in T$, then $\gamma_{\alpha}(U, d') \leq kC \gamma_{\alpha}(T, d)$ \citep[Theorem 1.3.6]{talagrand2006generic}.
    \item Consider a Gaussian process $\{X_t\}_{t\in T}$ on $T$, and the canonical distance $d(s,t)=\sqrt{\mathbb{E}(X_t-X_s)^2}$. For some universal constant $c$ we have $\gamma_2(T,d)\leq c\, \mathbb{E} \sup_{t\in T}X_t$ \citep[Theorem 2.1.1]{talagrand2006generic}.
\end{enumerate}
\end{lemma}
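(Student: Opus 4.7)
The plan is to handle the two parts separately. Both rely on the admissible-sequence formulation of Talagrand's functional: recall that
\begin{equation*}
\gamma_\alpha(T,d) = \inf \sup_{t \in T} \sum_{n \geq 0} 2^{n/\alpha}\, d(t, T_n),
\end{equation*}
where the infimum runs over all admissible sequences $(T_n)_{n \geq 0}$ of subsets of $T$ satisfying $|T_0| = 1$ and $|T_n| \leq 2^{2^n}$ for $n \geq 1$. With this in hand, part 1 is essentially bookkeeping, while part 2 is Talagrand's majorizing measure theorem and constitutes the entire analytical difficulty.

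For the first statement, I would push an almost-optimal admissible sequence forward through $f$. Given $\varepsilon > 0$, choose an admissible sequence $(T_n)$ on $T$ with $\sup_t \sum_n 2^{n/\alpha} d(t, T_n) \leq \gamma_\alpha(T,d) + \varepsilon$ and set $U_n := f(T_n)$. Because $f$ is onto and $|U_n| \leq |T_n|$, the sequence $(U_n)$ is admissible on $U$ (the condition $|U_0| = 1$ is automatic since $|T_0| = 1$). For any $u \in U$, fix a preimage $t \in f^{-1}(u)$; the Lipschitz hypothesis gives
\begin{equation*}
d'(u, U_n) = \inf_{s \in T_n} d'(f(t), f(s)) \leq C \inf_{s \in T_n} d(t,s) = C\, d(t, T_n).
\end{equation*}
Multiplying by $2^{n/\alpha}$, summing over $n$, and taking the supremum over $u$ yields $\gamma_\alpha(U, d') \leq C (\gamma_\alpha(T, d) + \varepsilon)$; sending $\varepsilon \to 0$ gives the bound with $k = 1$. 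A constant $k \neq 1$ can appear only if one adopts a slightly different but equivalent definition (for example, one based on admissible \emph{partitions} of $T$ rather than nested finite subsets), in which case $k$ absorbs the translation between conventions.

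The second statement is the nontrivial direction of Talagrand's majorizing measure theorem. The companion inequality $\mathbb{E}\sup_{t \in T} X_t \lesssim \gamma_2(T,d)$ is a textbook generic-chaining argument and could be written out directly, but the bound we need, $\gamma_2(T,d) \lesssim \mathbb{E}\sup_{t \in T} X_t$, requires extracting from purely probabilistic information about the Gaussian supremum an explicit admissible partitioning of $T$ that witnesses a small value of $\gamma_2$. This is precisely the main obstacle, and Talagrand overcomes it by an inductive construction in which, at each scale $2^{-n}$, a Sudakov-type lower bound is applied to well-separated subsets of $T$ to control both the number of pieces of diameter $\sim 2^{-n}$ used at level $n$ and the contribution of that level to the chaining sum, so that the total remains bounded by a universal multiple of $\mathbb{E}\sup_t X_t$. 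Reproducing this construction is well outside the scope of the present paper, so I would simply cite \citet[Theorem 2.1.1]{talagrand2006generic} for this part, as the statement does.
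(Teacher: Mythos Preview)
Your proposal is correct. The paper does not actually prove this lemma; it merely states the two assertions with citations to Talagrand's book, so there is no ``paper's own proof'' to compare against. Your pushforward argument for part 1 is the standard one (and is essentially what appears in Talagrand's Theorem 1.3.6), and your decision to cite the majorizing measure theorem for part 2 matches the paper's treatment exactly.
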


Let start the proof of Lemma \ref{tan_gw}. The proof is based on Lemma \ref{emp_upper_bound}, which is an upper bound of the empirical process $\mathbb{E}\sup_{f\in\mathcal{F}}\big|\frac{1}{n}\sum_{i=1}^n f^2(Z_i)-\mathbb{E}f^2(Z_i) \big|$. See \citet[Lemma A.3.1]{vu2012minimax} and \citet[Theorem A]{mendelson2010empirical}.

\begin{lemma}
\label{emp_upper_bound}
Let $Z_i, i=1,...,n$ be i.i.d. random variables. There exists an absolute constant $c$ for which the following holds. If $\mathcal{F}$ is a symmetric class of mean-zero functions then
\begin{align*}
    \mathbb{E}\sup_{f\in\mathcal{F}}\Big|\frac{1}{n}\sum_{i=1}^n f^2(Z_i)-\mathbb{E}f^2(Z_i) \Big| \leq c \big[d_{\psi_1}\frac{\gamma_2(\mathcal{F},\psi_2)}{\sqrt{n}} \vee\frac{\gamma_2^2(\mathcal{F},\psi_2)}{n}\big]
\end{align*}
where $d_{\psi_1}=\sup_{f\in\mathcal{F}}\|f\|_{\psi_1}$.
\end{lemma}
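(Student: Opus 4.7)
The plan is to follow the generic-chaining argument underlying Mendelson (2010) and present it as a three-step reduction.

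\textbf{Step 1 (Symmetrization and passage to a sub-Gaussian/sub-exponential process).} A standard Rademacher symmetrization replaces the centered empirical supremum by
\[
2\,\mathbb{E}\sup_{f\in\mathcal{F}}\Big|\tfrac{1}{n}\sum_{i=1}^{n}\epsilon_i f^2(Z_i)\Big|,
\]
where $(\epsilon_i)$ are independent Rademacher signs, independent of $(Z_i)$. Denote $X_f=\sum_i\epsilon_i f^2(Z_i)$ and focus on controlling $\mathbb{E}\sup_f|X_f|$, since the supremum of a process bounds the supremum of its centered empirical counterpart up to constants.

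\textbf{Step 2 (Increments of the quadratic process).} For $f,g\in\mathcal{F}$, the key identity is $f^2-g^2=(f-g)(f+g)$, which by the product inequality for Orlicz norms yields
\[
\|f^2(Z)-g^2(Z)\|_{\psi_1}\leq\|f(Z)-g(Z)\|_{\psi_2}\,\|f(Z)+g(Z)\|_{\psi_2}.
\]
Combining Bernstein's inequality for sums of centered sub-exponential random variables with Hoeffding's lemma (conditionally on $Z$) for Rademacher sums, I would derive a mixed-tail increment bound of the form
\[
\mathbb{P}\bigl(|X_f-X_g|\geq t\bigr)\leq 2\exp\!\Bigl(-c\min\!\bigl(t^2/(n\alpha(f,g)^2),\; t/\beta(f,g)\bigr)\Bigr),
\]
in which $\alpha(f,g)$ is controlled by $d_{\psi_1}\,\|f-g\|_{\psi_2}$ (the ``variance-type'' scale, which pulls out the $\psi_1$ diameter times the $\psi_2$ metric) and $\beta(f,g)$ is controlled by $\|f-g\|_{\psi_2}\cdot\|f+g\|_{\psi_2}$ (the pure-product scale).

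\textbf{Step 3 (Generic chaining with the $\gamma_2$ functional).} With the Bernstein-type increments from Step 2, I would invoke Talagrand's generic chaining for mixed sub-Gaussian/sub-exponential processes (Talagrand, \emph{Upper and Lower Bounds for Stochastic Processes}, Chapter~2; alternatively Dirksen's tail bounds). This yields
\[
\mathbb{E}\sup_{f\in\mathcal{F}}|X_f|\ \lesssim\ \sqrt{n}\,d_{\psi_1}\,\gamma_2(\mathcal{F},\psi_2)\ +\ \gamma_1(\mathcal{F},\psi_2\text{-product metric}).
\]
Rescaling by $n$ recovers the first term $d_{\psi_1}\gamma_2(\mathcal{F},\psi_2)/\sqrt{n}$. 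For the second term, the product structure $\|f-g\|_{\psi_2}\|f+g\|_{\psi_2}$ lets me replace what would naively be a $\gamma_1$ functional on $\mathcal{F}$ by a product of two $\gamma_2$-type quantities, one of which is again controlled by $\gamma_2(\mathcal{F},\psi_2)$ and the other by itself, giving $\gamma_2^2(\mathcal{F},\psi_2)/n$.

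\textbf{Expected main obstacle.} The sharp step is the upgrade of the high-deviation (sub-exponential) chaining term from $\gamma_1(\mathcal{F},\psi_2)/n$ to $\gamma_2^2(\mathcal{F},\psi_2)/n$. A direct Bernstein-chaining inequality would produce $\gamma_1/n$, which is strictly weaker. Getting the improvement requires exploiting the quadratic structure via the product bound $\|f^2-g^2\|_{\psi_1}\leq\|f-g\|_{\psi_2}\|f+g\|_{\psi_2}$ carefully throughout the chain, effectively running two nested chaining arguments (one over $f-g$ and one over $f+g$), each controlled by $\gamma_2$ in the $\psi_2$ metric. Everything else (symmetrization, Orlicz products, Bernstein) is routine; the chaining bookkeeping is the delicate part and would take up most of the write-up.
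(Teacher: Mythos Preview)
The paper does not prove this lemma at all: it is stated as a black-box citation to Mendelson (2010, Theorem~A) and Vu (2012, Lemma~A.3.1), with the phrase ``See \ldots'' and nothing further. Your proposal is therefore not a reconstruction of the paper's proof but an attempt to reprove the cited external result from scratch.

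As a sketch of Mendelson's argument, your outline is essentially on the right track: symmetrization, the factorization $f^2-g^2=(f-g)(f+g)$ yielding $\|f^2-g^2\|_{\psi_1}\lesssim\|f-g\|_{\psi_2}\|f+g\|_{\psi_2}$, and generic chaining with Bernstein-type increments are indeed the ingredients. You also correctly identify the crux, namely upgrading the sub-exponential chaining term from $\gamma_1/n$ to $\gamma_2^2/n$. One caution: your description of the mechanism for that upgrade (``two nested chaining arguments, one over $f-g$ and one over $f+g$'') is a bit loose. In Mendelson's actual argument the point is that \emph{both} scales in the Bernstein tail for the increment $X_f-X_g$ are controlled by the same $\psi_2$ distance $\|f-g\|_{\psi_2}$ (times a diameter-type factor), so a single chaining with respect to the $\psi_2$ metric produces $\gamma_2$ for the Gaussian part and $\gamma_2^2$ for the exponential part automatically; you do not literally chain twice. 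If you were to write this out, that is where care is needed.

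For the purposes of this paper, however, simply citing Mendelson (2010, Theorem~A) is what the authors do and is entirely adequate.
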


\textbf{1). Decompose $\omek$}\\
%%%%%%% Convert to LHS of Lemma 16
To apply Lemma \ref{emp_upper_bound}, we start with decomposing $\omek$. Under the spiked covariance model, we have $X_i \sim \mathcal{N}(0, I + \nu \bar{x}\bar{x}\T)$. Let $\mathbb{X} = [X_1\T; X_2\T;...;X_n\T]$. With the fact $\sqrt{I+\nu\bx\bx\T}=I+(\sqrt{1+\nu}-1)\bx\bx\T$, we can rewrite the noise matrix $E$ as
\begin{align*}
E & = A - \bar{A}\\
& = \frac{\mathbb{X}\T \mathbb{X}}{n} - (I + \nu \bar{x}\bar{x}\T)\\
& = \big(I+(\sqrt{\nu+1}-1)\bx\bx\T \big)
\big(\frac{\tilde{X}\T\tilde{X}}{n}-I\big)
\big(I+(\sqrt{\nu+1}-1)\bx\bx\T \big)
\end{align*}
where $\tilde{X}$ is a standard Gaussian matrix. Let $\nu_0 = \sqrt{\nu+1}-1$, then
\begin{align}
\omek & = \sup\limits_{\|x\|=\|y\|=1,\,\,x,y\in K} \Big|x\T (I+\nu_0\bx\bx\T )(\frac{\tilde{X}\T\tilde{X}}{n}-I)(I+\nu_0\bx\bx\T )y \Big|\nonumber\\
& = \sup\limits_{\|x\|=\|y\|=1,\,\,x,y\in K} \Big|[x+\nu_0(\bx\T x)\bx]\T (\frac{\tilde{X}\T\tilde{X}}{n} - I)[y+\nu_0(\bx\T y)\bx] \Big|\nonumber\\
& \leq \sup\limits_{\|x\|=\|y\|=1,\,\,x,y\in K}|x\T (\frac{\tilde{X}\T\tilde{X}}{n} - I)y| + \nu_0\sup\limits_{\|y\|=1,\,\,y\in K}|\bx\T(\frac{\tilde{X}\T\tilde{X}}{n} - I)y|\nonumber \\
& + \nu_0\sup\limits_{\|x\|=1,\,\,x\in K}|\bx\T (\frac{\tilde{X}\T\tilde{X}}{n} - I)x| + \nu_0^2|\bx\T (\frac{\tilde{X}\T\tilde{X}}{n} - I)\bx|
\label{omek_decomp}
\end{align}
With an algebraic trick one can show
\begin{align*}
    & \sup\limits_{\|x\|=\|y\|=1,\,\,x,y\in K}|x\T (\frac{\tilde{X}\T\tilde{X}}{n} - I)y| \\
    = & \sup\limits_{\|x\|=\|y\|=1,\,\,x,y\in K}\frac{1}{2}|(x+y)\T(\frac{\tilde{X}\T\tilde{X}}{n} - I)(x+y) - x\T(\frac{\tilde{X}\T\tilde{X}}{n} - I)x-y\T(\frac{\tilde{X}\T\tilde{X}}{n} - I)y|\\
    \leq & \sup\limits_{\|x\|=\|y\|=1,\,\,x,y\in K}\frac{1}{2}|(x+y)\T(\frac{\tilde{X}\T\tilde{X}}{n} - I)(x+y)| + \sup\limits_{\|x\|=1,\,\,x\in K}\frac{1}{2}|x\T(\frac{\tilde{X}\T\tilde{X}}{n} - I)x|\\
    & + \sup\limits_{\|y\|=1,\,\,y\in K}\frac{1}{2}|y\T(\frac{\tilde{X}\T\tilde{X}}{n} - I)y|
\end{align*}
Let $z=x+y$. Since $K$ is a convex cone, $z$ is also in $K$. We have
\begin{align*}
    & \sup\limits_{\|x\|=\|y\|=1,\,\,x,y\in K}\frac{1}{2}|(x+y)\T(\frac{\tilde{X}\T\tilde{X}}{n} - I)(x+y)|\\
    \leq & \sup\limits_{\|z\|=2,\,\,z\in K}\frac{1}{2}|z\T(\frac{\tilde{X}\T\tilde{X}}{n} - I)z| = \sup\limits_{\|z\|=1,\,\,z\in K}2|z\T(\frac{\tilde{X}\T\tilde{X}}{n} - I)z|
\end{align*}
so that 
\begin{align*}
    \sup\limits_{\|x\|=\|y\|=1,\,\,x,y\in K}|x\T (\frac{\tilde{X}\T\tilde{X}}{n} - I)y| \leq 3\sup\limits_{\|x\|=1,\,\,x\in K}|x\T(\frac{\tilde{X}\T\tilde{X}}{n} - I)x|
\end{align*}
The same trick can be applied to the other two terms $\sup\limits_{\|y\|=1,\,\,y\in K}|\bx\T(\frac{\tilde{X}\T\tilde{X}}{n} - I)y|$ and $\sup\limits_{\|x\|=1,\,\,x\in K}|\bx\T (\frac{\tilde{X}\T\tilde{X}}{n} - I)x|$ in \eqref{omek_decomp}:
\begin{align*}
    \sup\limits_{\|x\|=1,\,\,x\in K}|\bx\T(\frac{\tilde{X}\T\tilde{X}}{n} - I)x| & \leq \frac{5}{2}\sup\limits_{\|x\|=1,\,\,x\in K}|x\T(\frac{\tilde{X}\T\tilde{X}}{n} - I)x| + \frac{1}{2}|\bx\T (\frac{\tilde{X}\T\tilde{X}}{n} - I)\bx|
\end{align*}
so we can get
\begin{align*}
    \omek & \leq (5\nu_0+3) \sup\limits_{\|x\|=1,\,\,x\in K}|x\T(\frac{\tilde{X}\T\tilde{X}}{n} - I)x| + (\nu_0^2+\nu_0)|\bx\T (\frac{\tilde{X}\T\tilde{X}}{n} - I)\bx|
\end{align*}

\textbf{2). Bound $\mathbb{E}|\bx\T (\frac{\tilde{X}\T\tilde{X}}{n} - I)\bx|$}.\\
Notice that $\bx\T (\frac{\tilde{X}\T\tilde{X}}{n} - I)\bx = \frac{1}{n}\sum\limits_{i=1}^n [(\tilde{X}\bx)^2_i - 1]$, and $(\tilde{X}\bx)^2_i\sim\chi_1^2$. Let $Z=\frac{1}{n}\sum\limits_{i=1}^n [(Xv)^2_i - 1]$. The following concentration inequalities hold for $Z$ \citep[Lemma 1]{laurent2000adaptive},
    \begin{align*}
        \mathbb{P}\big( 
        Z & \geq 
        2\sqrt{\frac{x}{n}} + 2\frac{x}{n} 
        \big) \leq e^{-x} \\
        \mathbb{P}\big( 
        Z & \leq 
        -2\sqrt{\frac{x}{n}} 
        \big) \leq e^{-x}
    \end{align*}
so that
\begin{align*}
    \mathbb{P}\big(\sqrt{n}Z\geq t\big) & \leq e^{-\frac{t^2}{16}}\qquad\text{if }t<4\sqrt{n}\\
    \mathbb{P}\big(\sqrt{n}Z\geq t\big) & \leq e^{-\frac{\sqrt{n}t}{4}}\qquad\text{if }t\geq4\sqrt{n}\\
    \mathbb{P}\big(\sqrt{n}Z\leq -t\big) & \leq e^{-\frac{t^2}{4}}
\end{align*}
The expectation can be bounded as
\begin{align*}
    \mathbb{E}|\sqrt{n}Z| & = \int_0^{\infty} \mathbb{P}(\sqrt{n}|Z|>t)dt\\
    & = \int_{t<4\sqrt{n}}e^{-\frac{t^2}{16}}dt + \int_{t>4\sqrt{n}}e^{-\frac{\sqrt{n}t}{4}}dt\\
    & \leq \int_{-\infty}^{+\infty}e^{-\frac{t^2}{16}}dt - \frac{4}{\sqrt{n}}e^{-\frac{\sqrt{n}t}{4}}\Big|_{4\sqrt{n}}^{+\infty}\\
    & = 4\sqrt{\pi} + \frac{4}{\sqrt{n}}e^{-n^2}
\end{align*}
thus
\begin{align*}
    \mathbb{E}|Z| \lesssim 1/\sqrt{n}
\end{align*}

%%%%%%%%%
\textbf{3). Bound $\mathbb{E}\sup\limits_{\|x\|=1,\,\,x\in K}|x\T(\frac{\tilde{X}\T\tilde{X}}{n} - I)x|$}.\\
Let $\tilde{X_i}$ be the $i$-th row of the matrix $\tilde{X}$. Since $\tilde{X}$ is a standard Gaussian matrix, $\tilde{X_i}$ is a $p$-dimensional standard Gaussian vector. We have
\begin{align*}
    \mathbb{E}\sup\limits_{\|x\|=1,\,\,x\in K}|x\T(\frac{\tilde{X}\T\tilde{X}}{n} - I)x| & = \mathbb{E}\sup\limits_{\|x\|=1,\,\,x\in K}|x\T(\frac{1}{n}\sum_{i=1}^n\tilde{X_i}\tilde{X_i}\T - I)x|\\
    & = \mathbb{E}\sup\limits_{\|x\|=1,\,\,x\in K}|\frac{1}{n}\sum_{i=1}^n\langle \tilde{X_i},x\rangle^2 - \mathbb{E}\langle \tilde{X_i},x\rangle^2|
\end{align*}
Define a class of linear functionals $\mathcal{F} \coloneqq \{ \langle , v\rangle, \,\, v\in K\bigcap\bbs^{p-1}\}$.  The above quantity can be written as $\mathbb{E}\sup_{f\in\mathcal{F}}\,2\Big|\frac{1}{n}\sum_{i=1}^n f^2(\tilde{X_i})-\mathbb{E}f^2(\tilde{X_i}) \Big|$, which fits in the setting of Lemma \ref{emp_upper_bound}.

Now we decode the abstract RHS of Lemma \ref{emp_upper_bound} into an explicit expression related to the Gaussian Complexity $w(K\bigcap\bbs^{p-1})$. First we bound $d_{\psi_1}$. For a constant $c_1$
\begin{align*}
    d_{\psi_1} & = \sup_{v\in K\bigcap\bbs^{p-1}} \|\langle \tilde{X_i}, v\rangle\|_{\psi_1} \\
    & \leq c_1 \sup_{v\in K\bigcap\bbs^{p-1}} \|\langle \tilde{X_i}, v\rangle\|_{\psi_2}
\end{align*}
Since $\tilde{X_i}$ is a standard normal random variable, we have
\begin{align*}
    \|\langle \tilde{X_i}, v\rangle\|_{\psi_2} \leq \sqrt{\frac{8}{3}}\|v\|_2 \leq \sqrt{\frac{8}{3}}
\end{align*}
and so
\begin{align*}
    d_{\psi_1} & \leq c_1\sqrt{\frac{8}{3}}
\end{align*}

Then we bound $\gamma_2(\mathcal{F},\psi_2)$ by the Gaussian Complexity $w(K\bigcap\bbs^{p-1})$. Notice that the metric induced by the $\psi_2$-norm on $\mathcal{F}$ is equivalent with the Euclidean distance on $K\bigcap\bbs^{p-1}$
\begin{align*}
    \|(f-g)(\tilde{X_i})\|_{\psi_2} = \|\langle \tilde{X_i}, (v_f-v_g)\rangle\|_{\psi_2}
    \leq \sqrt{\frac{8}{3}} \|v_f-v_g\|_2
\end{align*}
where $f,g\in\mathcal{F}$ and $v_f,v_g\in K\bigcap\bbs^{p-1}$. By the first result of Lemma \ref{tala_lemmas}, for a constant $c_2$
\begin{align*}
    \gamma_2(\mathcal{F},\psi_2) \leq c_2\, \sqrt{\frac{8}{3}} \,\gamma_2(K\bigcap\bbs^{p-1}, \|\cdot\|_2)
\end{align*}
And by the second result of Lemma \ref{tala_lemmas}, for some constant $c_3$
\begin{align*}
    \gamma_2(\mathcal{F},\psi_2) \leq c_2 c_3 \sqrt{\frac{8}{3}}\,\,\mathbb{E} \sup_{v\in K\bigcap\bbs^{p-1}}\langle Z, v\rangle
\end{align*}
where $Z$ is a $p$-dimensional standard Gaussian random vector. And according to the definition of Gaussian Complexity $w(K\bigcap\mathbb{S}^{p-1})=\sup_{v\in K\bigcap\bbs^{p-1}}\langle Z, v\rangle$. Thus
\begin{align*}
    \mathbb{E}\sup\limits_{\|x\|=1,\,\,x\in K}|x\T(\frac{\tilde{X}\T\tilde{X}}{n} - I)x| & = \mathbb{E}\sup_{f\in\mathcal{F}}\,\Big|\frac{1}{n}\sum_{i=1}^n f^2(Z_i)-\mathbb{E}f^2(Z_i) \Big|\\
    & \leq c \big[d_{\psi_1}\frac{\gamma_2(\mathcal{F},\psi_2)}{\sqrt{n}} \vee\frac{\gamma_2^2(\mathcal{F},\psi_2)}{n}\big]\\
    & \leq cc_2c_3\frac{8}{3}\bigg[ \frac{c_1w(K\bigcap\mathbb{S}^{p-1})}{\sqrt{n}} \vee \frac{c_2c_3w^2(K\bigcap\mathbb{S}^{p-1})}{n}
    \bigg] \\
    & \lesssim \frac{w(K\bigcap\mathbb{S}^{p-1})}{\sqrt{n}} \vee \frac{w^2(K\bigcap\mathbb{S}^{p-1})}{n}
\end{align*}

%%%%%%%%%%%%
\textbf{4). Bound $\mathbb{E}\omek$}.\\
Finally we can get
\begin{align*}
    \mathbb{E}\omek & \lesssim (5\nu_0+3)\bigg[ \frac{w(K\bigcap\mathbb{S}^{p-1})}{\sqrt{n}} \vee \frac{w^2(K\bigcap\mathbb{S}^{p-1})}{n} \bigg] + (\nu_0^2+\nu_0)\frac{1}{\sqrt{n}}
\end{align*}
Plug in $\nu_0 = \sqrt{\nu+1}-1$ to get
\begin{align*}
    \mathbb{E}\omek & \lesssim (5\sqrt{\nu+1}-2)\bigg[ \frac{w(K\bigcap\mathbb{S}^{p-1})}{\sqrt{n}} \vee \frac{w^2(K\bigcap\mathbb{S}^{p-1})}{n} \bigg] + \frac{\nu+3-3\sqrt{\nu+1}}{\sqrt{n}}\\
    & \lesssim \sqrt{\nu+1}\bigg[ \frac{w(K\bigcap\mathbb{S}^{p-1})}{\sqrt{n}} \vee \frac{w^2(K\bigcap\mathbb{S}^{p-1})}{n} \bigg] + \frac{\nu+3-3\sqrt{\nu+1}}{\sqrt{n}}
\end{align*}

%%%%%%%%
%%%%%%%%
\section{Proof of Theorem \ref{cvx_est} and Corollary \ref{cvx_est_coro}}
%%%%%%%%
\subsection{Proof of Theorem \ref{cvx_est}}
Suppose first that $v\T  \bar{x} \geq 0$. 
\begin{align*}
    v\T Ev - \bx\T  E\bx & = 
    v\T Av\T  - v\T \bar{A}v - (\bx A\bx - \bx\bar{A}\bx) \\
    & \geq \bx\bar{A}\bx - v\T \bar{A}v \\
    & \geq \lambda - (v\T \bx)^2\lambda - [1-(v\T \bx)^2]\mu \qquad \qquad \quad \text{By 2. in Lemma \ref{ineqs}}\\
    & \geq \nu[1-(v\T \bx)^2] \\
    & \geq \frac{1}{2}\nu\|v-\bx\|^2
\end{align*}
For the first term, by the definition of $\omek$, we can directly get $v\T Ev - \bx\T  E\bx \leq 2\omek$. 
For the second term, notice that 
\begin{align*}
    v\T Ev - \bx\T  E\bx \leq 4\|v-\bx\|\omtan
    \qquad \qquad \quad 
    \text{By 3. in Lemma \ref{ineqs}}
\end{align*}
so 
\begin{align*}
    \frac{1}{2}\nu\|v-\bx\|^2 \leq4\|v-\bx\|\omtan \qquad \Rightarrow \qquad  \|v-\bar{x}\|\leq \frac{8\omtan}{\nu}
\end{align*}
Next suppose that $v\T  \bar{x} \leq 0$. Repeating the same proof as above we observe that
\begin{align*}
    v\T Ev - \bx\T  E\bx \geq \frac{1}{2}\nu\|v+\bx\|^2.
\end{align*}
This and the bound $v\T Ev - \bx\T  E\bx \leq 2\omek$ directly give the proof of one of the inequalities. Next by 3. in Lemma \ref{ineqs} we have 
\begin{align*}
    v\T Ev - \bx\T  E\bx \leq 4\|v+\bx\|\omek
\end{align*}
which implies the second bound.

%%%%%%%%
\subsection{Proof of Corollary \ref{cvx_est_coro}}
It is sufficient to show $\omtan \vee \omek \leq \|E\|_{op}$. This is obvious by the definitions of $\omtan$ and $\omek$.

%%%%%%%%
%%%%%%%%
\section{Proof of Theorem \ref{stop} and Theorem \ref{bad_vt}}
A key to analyze the $L_2$ error of the cone projected power iteration estimator is: If $\bx\T v_0$ is larger than a certain constant, $\bx\T v_t$ is always larger than that constant throughout the iterations. This is rigorously presented in the following lemma \ref{bigger_c0}. It is a foundation for the proof of Theorem \ref{stop}. 

\begin{lemma}\label{bigger_c0}
Suppose $\bar{x}\T v_0\geq c_0$ for some $c_0 > c_{-1}$, and the first eigengap of $\bar{A}$ is greater than $(3 + 2\sqrt{2})\omek$, then $\bar{x}\T v_t\geq c_0\wedge c_{1}, \,\,\forall t\in\N$.
\end{lemma}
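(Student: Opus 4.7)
The plan is to induct on $t$. For $t=0$ the claim $\bar{x}\T v_0 \geq c_0 \wedge c_1$ is immediate from the hypothesis. For the induction step, abbreviate $\alpha_t := \bar{x}\T v_t$, assume $\alpha_t \geq c_0 \wedge c_1$, and prove $\alpha_{t+1} \geq c_0 \wedge c_1$. The strategy is to distill the update $v_{t+1} = \Pi_K A v_t / \|\Pi_K A v_t\|$ into a scalar recursion between $\alpha_{t+1}$ and $\alpha_t$, then read the desired bound off the recursion's fixed-point structure.

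First I would lower bound the numerator $\bar{x}\T \Pi_K A v_t$ and upper bound the denominator $\|\Pi_K A v_t\|$. Moreau's decomposition (Theorem~\ref{moreausthm}), together with $\bar{x} \in K$ and $\Pi_{K^\circ} A v_t \in K^\circ$, gives $\bar{x}\T \Pi_K A v_t \geq \bar{x}\T A v_t = \lambda\alpha_t + \bar{x}\T E v_t \geq \lambda\alpha_t - \|E\|_K$, where the last step uses $|\bar{x}\T E v_t| \leq \|E\|_K$ (both $\bar{x}$ and $v_t$ are in $K\cap\mathbb{S}^{p-1}$). By part~1 of Lemma~\ref{ineqs}, $\|\Pi_K A v_t\| = v_{t+1}\T A v_t$; decomposing $\bar A = \lambda \bar{x}\bar{x}\T + \bar A'$ with $\|\bar A'\|_{op} \leq \mu$ and $\bar A'\bar{x}=0$, a Cauchy--Schwarz step on the $\bar{x}^\perp$-components of $v_{t+1}$ and $v_t$, combined with $|v_{t+1}\T E v_t| \leq \|E\|_K$, yields $v_{t+1}\T A v_t \leq \lambda\alpha_{t+1}\alpha_t + \mu\sqrt{(1-\alpha_{t+1}^2)(1-\alpha_t^2)} + \|E\|_K$. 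Lemma~\ref{simple:v0:lemma} and the monotonicity of $\{v_t\T A v_t\}$ established in Proposition~\ref{algo_complexity} certify that the denominator is positive, so dividing produces the recursion
\begin{equation*}
    \alpha_{t+1}\bigl[\lambda\alpha_{t+1}\alpha_t + \mu\sqrt{(1-\alpha_{t+1}^2)(1-\alpha_t^2)} + \|E\|_K\bigr] \geq \lambda\alpha_t - \|E\|_K.
\end{equation*}

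Finally I would analyze this recursion as a polynomial inequality in $\alpha_{t+1}$. By construction, $c_{-1}$ and $c_1$ are the two real roots of the quadratic $\nu c^2 - (\nu - \|E\|_K)c + \|E\|_K$, whose reality is exactly the hypothesis $\nu \geq (3+2\sqrt{2})\|E\|_K$. Split into two cases: if $\alpha_t \geq c_1$, show that the recursion forces $\alpha_{t+1}\geq c_1$; if $c_0 \leq \alpha_t < c_1$ (so $c_0\wedge c_1 = c_0 > c_{-1}$), show $\alpha_{t+1}\geq c_0$. In each case, assume for contradiction that the target fails, isolate the $\mu\sqrt{\cdot}$ term in the recursion, and square to clear the radical. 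Using $\mu = \lambda - \nu$ and pulling out the common factor $(1+\alpha_{t+1})>0$ reduces the resulting inequality to the defining quadratic of $c_\eta$; its negativity on the open interval $(c_{-1},c_1)$, coupled with the assumption $c_0 > c_{-1}$, provides the contradiction. The main obstacle is this squaring step: after squaring, one obtains a degree-four polynomial in $\alpha_{t+1}$, and uncovering the clean quadratic factor requires careful bookkeeping through the algebraic simplifications, plus a sign check (using $\lambda\alpha_t \geq \|E\|_K$, itself a consequence of Lemma~\ref{lemma:bounds:c1:cm1}) to ensure that the direction of the inequality is preserved.
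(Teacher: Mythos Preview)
Your derivation of the scalar recursion is sound (once you note $\alpha_{t+1}\geq 0$, which follows from your own numerator bound $\bar{x}\T\Pi_KAv_t\geq\lambda\alpha_t-\|E\|_K>0$ via $\alpha_t>c_{-1}\geq\|E\|_K/\lambda$). The gap is in the final algebraic claim. Your recursion does \emph{not} reduce to the quadratic $\nu c^2-(\nu-\|E\|_K)c+\|E\|_K$ defining $c_{\pm1}$. To see this, set $\alpha_{t+1}=\alpha_t=c$ in your displayed inequality and check when equality holds: you get $(1+c)[\nu c^2-\nu c+\|E\|_K]=0$, a \emph{different} quadratic with roots $\tilde c_{\pm1}=\tfrac{1}{2}(1\pm\sqrt{1-4\|E\|_K/\nu})$. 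So the promised factorization into ``the defining quadratic of $c_\eta$'' cannot happen. Moreover, away from the diagonal $\alpha_{t+1}=\alpha_t$ the squared inequality is a genuine two-variable degree-four expression; the difference-of-squares trick that gave the clean factorization at the fixed point is unavailable, and you have not indicated how the general case collapses.

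The paper sidesteps all of this with two moves you did not use. First, it splits on whether $\alpha_{t+1}\geq\alpha_t$ (trivial case) or $\alpha_{t+1}<\alpha_t$. Second, in the nontrivial case it replaces your Cauchy--Schwarz bound on $v_{t+1}\T Av_t$ by the chain $v_{t+1}\T Av_t\leq v_{t+1}\T Av_{t+1}\leq \nu\alpha_{t+1}^2+\mu+\|E\|_K\leq \nu\alpha_t+\mu+\|E\|_K$, invoking parts 4 and 2 of Lemma~\ref{ineqs} and then the case assumption $\alpha_{t+1}^2\leq\alpha_t$. This decouples the variables and yields the one-dimensional recursion $\alpha_{t+1}\geq f(\alpha_t)$ with $f(y)=(\lambda y-\|E\|_K)/(\nu y+\mu+\|E\|_K)$, whose fixed-point equation $f(y)=y$ is \emph{exactly} $\nu y^2-(\nu-\|E\|_K)y+\|E\|_K=0$. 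The conclusion then follows from monotonicity of $f$ without any squaring.
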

\begin{proof}[Proof of Lemma \ref{bigger_c0}]

We prove this result by induction. 

Given $\bx\T v_{t-1} \geq c_0$, if $\bar{x}\T v_t\geq \bar{x}\T v_{t-1}$, the inequality preserves trivially. 

Next suppose that $\bar{x}\T v_t\leq \bar{x}\T v_{t-1}$. We first show that $\bar{x}\T v_t$ is non-negative. We have the identity $\bar{x}\T v_t = \frac{\bar{x}\T \Pi_KAv_{t-1}}{\|\Pi_KAv_{t-1}\|}$, so it suffices to show that $\bar{x}\T \Pi_KAv_{t-1} \geq \bar{x}\T  Av_{t-1} \geq \lambda \bar{x}\T  v_{t-1} - \|E\|_K \geq 0$, which holds since $\bar{x}\T  v_{t-1} \geq c_0\wedge c_{1} > c_{-1} \geq \frac{\|E\|_K}{\lambda}$ (the last inequality is shown in Lemma \ref{lemma:bounds:c1:cm1}).  

Next observe the identities, 
\begin{align*}
    \bar{x}\T v_t & = \frac{\bar{x}\T \Pi_KAv_{t-1}}{\|\Pi_KAv_{t-1}\|} \,\geq \frac{\bar{x}\T Av_{t-1}}{v_t\T Av_{t-1}} \quad\quad\,\, \text{By Moreau's decomposition and 1. in Lemma \ref{ineqs}}\\
    & \geq \frac{\bar{x}\T Av_{t-1}}{v_t\T Av_t} \quad\quad\quad\quad\quad\quad
    \quad\quad\quad\text{ By 4. in Lemma \ref{ineqs}}\\
    & \geq \frac{\bar{x}\T \bar{A}v_{t-1}-\omek}{v_t\T \bar{A}v_t +\omek} \\
    & \geq \frac{\lambda \bar{x}\T v_{t-1} - \omek}
    {(\bar{x}\T v_t)^2\lambda + [1-(\bar{x}\T v_t)^2]\mu + \omek} 
    \qquad\qquad\qquad\qquad\quad \text{By 2. in Lemma \ref{ineqs}}\\
    & \geq \frac{\lambda \bar{x}\T v_{t-1} - \omek}
    {\nu(\bar{x}\T v_{t-1}) + \mu + \omek}
    \qquad\qquad\qquad\qquad\quad\qquad\qquad\quad \text{By the condition $0 \leq \bar{x}\T v_t\leq\bar{x}\T v_{t-1}$}
\end{align*}
Let $\alpha = \bar{x}\T v_{t-1}$, and $f(y) = \frac{\lambda y - \omek}{\nu y+\mu+\omek}$. Note that the roots of the quadratic equation $f(y) = y$ are $c_{-1}$ and $c_1$, so that for $y\in(-\infty, c_{-1}]\bigcup[c_1, \infty)$, we have $y \geq f(y)$.
By the fact that $\alpha = \bar{x}\T v_{t-1} \geq f(\alpha)$ and $\alpha \geq c_0\wedge c_{1} > c_{-1}$, it follows that $\alpha \geq c_{1}$. 
In addition since $\bar{x}\T  v_t \geq f(\alpha)$, it is sufficient to show $f(\alpha)\geq c_0\wedge c_{1}$. 
Notice that $f(y)$ is increasing, so $f(\alpha)\geq f(c_{1}) = c_{1} \geq c_0 \wedge c_{1}$. Thus the proof is complete.
\end{proof}

%%%%%%%%
\subsection{Proof of Theorem \ref{stop}}
For the first term, we can start to derive a lower bound and an upper bound of $\bx\T Av_{t-1}$ based on some inequalities from Lemma \ref{ineqs},
\begin{align*}
    \bx\T Av_{t-1} & = \lambda\bx\T v_{t-1} + \bx\T Ev_{t-1} \geq
    \lambda\bx\T v_{t-1} - \omek\\
    \bx\T Av_{t-1} & = \bx\T \Pi_KAv_{t-1} + \bx\T \Pi_{K^\circ}Av_{t-1}
    \leq \bx\T \Pi_KAv_{t-1} = 
    \|\Pi_KAv_{t-1}\| \bx\T v_{t} = 
    (v_{t}\T Av_{t-1})\bx\T v_{t}
\end{align*}
Combine the above two inequalities to get
\begin{align}
\label{stop1}
    \lambda\bx\T v_{t-1} - \omek & \leq (v_{t}\T Av_{t-1})\bx\T v_{t} 
 = (v_{t}\T Av_{t-1})\bx\T v_{t-1} + 
    (v_{t}\T Av_{t-1})\bx\T (v_{t} - v_{t-1}) 
\end{align}
Since $\|v_{t} - v_{t-1}\|\leq \Delta$, by Cauchy-Schwartz inequality and the definition of $\omek$,
\begin{align*}
    (v_{t}\T Av_{t-1})\bx\T  (v_{t} - v_{t-1}) & \leq
    |v_{t}\T \bar{A}v_{t-1} + v_{t}\T Ev_{t-1}|\,\|v_{t} - v_{t-1}\|\\
    & \leq (\lambda + \omek)\Delta 
\end{align*}
Furthermore, with the use of results in Lemma \ref{ineqs}, 
\begin{align*}
    (v_{t}\T Av_{t-1})\bx\T v_{t-1} & = 
    (v_{t}\T \bar{A}v_{t-1} + v_{t}\T Ev_{t-1})\bx\T v_{t-1} \\
    & \leq 
    \big(
    v_{t}\T \bar{A}v_t + (v_{t-1}-v_t)\T \bar{A}v_t + \omek
    \big)\bx\T v_{t-1} \\
    & \leq (v_{t}\T \bar{A}v_t)\bx\T v_{t-1} +
    \big(\|v_{t-1}-v_t\|\,\|\bar{A}v_t\| + \omek\big) \bx\T v_{t-1} \\
    & \leq (v_{t}\T \bar{A}v_t)\bx\T v_{t-1} +
    \big(\Delta\lambda + \omek\big) \bx\T v_{t-1} \\
    & \leq \big[
    \lambda(\bx\T v_t)^2 + \mu\big(1-(\bx\T v_t)^2\big)
    \big]\bx\T v_{t-1} + 
    \big(\Delta\lambda + \omek\big) \bx\T v_{t-1}
\end{align*}
Thus (\ref{stop1}) becomes
\begin{align*}
    \lambda\bx\T v_{t-1} - \omek & \leq 
    \big[
    \lambda(\bx\T v_t)^2 + \mu\big(1-(\bx\T v_t)^2\big)
    \big]\bx\T v_{t-1} + 
    \big(\Delta\lambda + \omek\big) \bx\T v_{t-1} +
    (\lambda + \omek)\Delta \\
    \Rightarrow \quad 
    \nu[1-(\bx\T v_t)^2]\bx\T v_{t-1} & \leq
    \big(\Delta\lambda + \omek\big) \bx\T v_{t-1} +
    (\lambda + \omek)\Delta + \omek 
\end{align*}
According to Lemma \ref{bigger_c0}, $\bx\T v_t\geq c_0\wedge c_{1} \geq 0$, so $1-(\bx\T v_t)^2\geq 1-\bx\T v_t = \frac{1}{2}\|v_t-\bx\|^2$, thus
\begin{align*}
    \frac{\nu}{2}\|v_t-\bx\|^2\bx\T v_{t-1} & \leq 
    \big(\Delta\lambda + \omek\big) \bx\T v_{t-1} +
    (\lambda + \omek)\Delta + \omek
\end{align*}
Since $c_0\wedge c_{1}\leq\bx\T v_{t-1}\leq 1$ and $\Delta \leq \frac{\omek}{2\lambda}\wedge1$, 
\begin{align*}
    \frac{\nu}{2}\|v_t-\bx\|^2(c_0\wedge c_{1}) & \leq 
    \big(\Delta\lambda + \omek\big) +
    (\lambda + \omek)\Delta + \omek \\
    & \leq 4\omek \\
    \Rightarrow \quad 
    \|v_t-\bx\| & \leq 
    \sqrt{\frac{8\omek}{(c_0\wedge c_{1})\nu}}
\end{align*}

For the second part, first to get a lower bound of $\bx\T Av_{t-1}$,
\begin{align}
\label{thm2_tan1}
    \bx\T Av_{t-1} & = \bx\T \bar{A}v_{t-1} + \bx\T Ev_{t-1}\nonumber\\
    & = \lambda\bx\T v_{t-1} + \bx\T Ev_{t-1}\nonumber\\
    & = \lambda\bx\T v_{t-1} + \bx\T E\bx + (v_{t-1}-\bx)\T E(v_{t-1}-\bx) + 2\bx\T E(v_{t-1}-\bx)\nonumber\\
    & \geq \lambda\bx\T v_{t-1} + \bx\T E\bx - \|v_{t-1}-\bx\|^2\omtan - 2\|v_{t-1}-\bx\|\omtan \nonumber\\
    & \geq \lambda\bx\T v_{t-1} + \bx\T E\bx - 4\|v_{t-1}-\bx\|\omtan \quad\quad \text{By the fact }\|v_{t-1}-\bx\|\leq2\nonumber\\
    & \geq \lambda\bx\T v_{t-1} + \bx\T E\bx - 4(\|v_{t}-\bx\| + \Delta)\omtan \nonumber\\
    & = \lambda\bx\T v_{t-1} + \bx\T E\bx - 4\|v_{t}-\bx\|\omtan-4\Delta\omtan
\end{align}
And again, use the second and third results in Lemma \ref{ineqs} to get an upper bound of $\bx\T Av_{t-1}$,
\begin{align}
\label{thm2_tan2}
    \bx\T Av_{t-1} & \leq \bx\T \Pi_KAv_{t-1} = 
    (v_{t}\T Av_{t-1})\bx\T v_{t}\nonumber\\
    & \leq(v_{t}\T Av_{t-1})\bx\T v_{t-1} + (v_{t}\T Av_{t-1})\|v_{t}-v_{t-1}\|\nonumber\\
    & \leq [v_t\T Av_t+ 
    (v_{t-1}-v_t)\T Av_t]\bx\T v_{t-1} + 
    (v_{t}\T Av_{t-1})\Delta\nonumber\\
    & = [v_t\T \bar{A}v_t + v_t\T Ev_t + 
    (v_{t-1}-v_t)\T Av_t]\bx\T v_{t-1} + 
    (v_{t}\T Av_{t-1})\Delta\nonumber\\
    & \leq \Big[\lambda(\bx\T v_t)^2+\mu(1-(\bx\T v_t)^2)\Big]\bx\T v_{t-1} +
    \Big[v_t\T Ev_t\Big]\bx\T v_{t-1} +
    \Big[(v_{t-1}-v_t)\T Av_t\Big]\bx\T v_{t-1} +
    (v_{t}\T Av_{t-1})\Delta
\end{align}
Then we bound each term in the RHS of the above inequality. First, by the third result in Lemma \ref{ineqs}
\begin{align}
\label{thm2_tan3}
    \Big[v_t\T Ev_t\Big]\bx\T v_{t-1} &= 
    \Big[ \bx\T E\bx + (v_t-\bx)\T E(v_t-\bx) + 2\bx\T E(v_t-\bx) \Big]\bx\T v_{t-1}\nonumber\\
    &\leq (\bx\T E\bx)\bx\T v_{t-1} + |(v_t-\bx)\T E(v_t-\bx) + 2\bx\T E(v_t-\bx)| |\bx\T v_{t-1}| \qquad \text{Since } \bx\T v_{t-1} \geq 0 \nonumber\\
    &\leq (\bx\T E\bx)\bx\T v_{t-1} + (\|v_t-\bx\|^2 + 2\|v_t-\bx\|)\omtan|\bx\T v_{t-1}| \nonumber\\
    &\leq (\bx\T E\bx)\bx\T v_{t-1} + 4\|v_t-\bx\|\omtan|\bx\T v_{t-1}| \nonumber\\
    &\leq (\bx\T E\bx)\bx\T v_{t-1} + 4\|v_t-\bx\|\omtan
\end{align}
With some modifications and the Cauchy-Schwartz inequality, the second term can be bounded as
\begin{align*}
    \Big[(v_{t-1}-v_t)\T Av_t\Big]\bx\T v_{t-1} &= \Big[(v_{t-1}-v_t)\T \bar{A}v_t + (v_{t-1}-v_t)\T Ev_t\Big]\bx\T v_{t-1}\\
    &\leq \Big[\Delta\lambda + |(v_{t-1}-v_t)\T Ev_t|\Big]|\bx\T v_{t-1}|\\
    &\leq \Delta\lambda + |(v_{t-1}-\bx)\T Ev_t| + |(v_{t}-\bx)\T Ev_t|\\
    &\leq \Delta\lambda + |(v_{t-1}-\bx)\T E(v_t-\bx) + (v_{t-1}-\bx)\T E\bx|\\
    &+ |(v_{t}-\bx)\T E(v_t-\bx) + (v_{t}-\bx)\T E\bx|\\
    &\leq \Delta\lambda + (\|v_{t-1}-\bx\|\|v_{t}-\bx\| \\
    & + \|v_{t-1}-\bx\|)\omtan + (\|v_{t}-\bx\|^2 + \|v_{t}-\bx\|)\omtan
\end{align*}
%{\color{blue} Looks like it has to be fixed. Super ugly!}
Use the fact that $\|v_{t}-\bx\|\leq2$ for any $t$, the above inequality can be written as
\begin{align}
\label{thm2_tan4}
    \Big[(v_{t-1}-v_t)\T Av_t\Big]\bx\T v_{t-1} &\leq \Delta\lambda + 3\|v_{t-1}-\bx\|\omtan + 3\|v_{t}-\bx\|\omtan\nonumber\\
    &\leq \Delta\lambda + 3(\|v_{t}-\bx\| + \Delta)\omtan + 3\|v_{t}-\bx\|\omtan\nonumber\\
    &= \Delta\lambda + 3\Delta\omtan + 6\|v_{t}-\bx\|\omtan
\end{align}
For the last term, observe that
\begin{align*}
    (v_{t}\T Av_{t-1})\Delta &= (v_{t}\T \bar{A}v_{t-1} + v_{t}\T Ev_{t-1})\Delta\\
    &\leq \Big[ \lambda + \bx\T E\bx + (v_{t-1}-\bx)\T E(v_t-\bx) + \bx\T E(v_{t-1}-\bx) + \bx\T E(v_t-\bx)\Big]\Delta\\
    &= \Big[ \lambda + \bx\T E\bx + (\|v_{t-1}-\bx\|\|v_t-\bx\| + \|v_{t-1}-\bx\| + \|v_t-\bx\|)\omtan \Big]\Delta
\end{align*}
Use the fact that $\|v_{t}-\bx\|\leq2$ for any $t$, and $\|v_t - v_{t-1}\|\leq \Delta$, the above inequality can be written as
\begin{align}
\label{thm2_tan5}
    (v_{t}\T Av_{t-1})\Delta &\leq \Big[ \lambda + \bx\T E\bx + (2\|v_t-\bx\| + \Delta+\|v_{t}-\bx\| + \|v_t-\bx\|)\omtan \Big]\Delta\nonumber\\
    &= \Big[ \lambda + \bx\T E\bx + (4\|v_t-\bx\| + \Delta)\omtan \Big]\Delta\nonumber\\
    &\leq \Delta\lambda + \Delta\omtan + 4\omtan\|v_t-\bx\| + \Delta^2\omtan\nonumber\\
    &\leq \Delta\lambda + 2\Delta\omtan + 4\omtan\|v_t-\bx\|
\end{align}
Combine the inequalities \eqref{thm2_tan1}, \eqref{thm2_tan2} to get \begin{align*}
    (\lambda-\mu)[1-(\bx\T v_t)^2]\bx\T v_{t-1} + \bx\T E\bx - 4\|v_{t}-\bx\|\omtan-4\Delta\omtan \leq\\ \Big[v_t\T Ev_t\Big]\bx\T v_{t-1} +
    \Big[(v_{t-1}-v_t)\T Av_t\Big]\bx\T v_{t-1} +
    (v_{t}\T Av_{t-1})\Delta
\end{align*}
Plug in \eqref{thm2_tan3}, \eqref{thm2_tan4}, \eqref{thm2_tan5} into the RHS of above inequality to get
\begin{align*}
    (\lambda-\mu)[1-(\bx\T v_t)^2]\bx\T v_{t-1} + (1-\bx\T v_{t-1})\bx\T E\bx \leq 18\omtan\|v_t-\bx\| + 9\Delta\omtan + 2\Delta\lambda
\end{align*}
By the fact that 
$1-\bx\T v_{t-1} = \frac{1}{2}\|v_{t-1}-\bx\|^2 \leq \frac{1}{2}(\Delta + \|v_t-\bx\|)^2 = \frac{1}{2}(\Delta^2 + 2\Delta\|v_t-\bx\| + \|v_t-\bx\|^2) \leq \Delta+2\|v_t-\bx\|$, the above inequality becomes
\begin{align*}
    &(\lambda-\mu)[1-(\bx\T v_t)^2]\bx\T v_{t-1} - (\Delta + 2\|v_t-\bx\|)\omtan \leq 18\omtan\|v_t-\bx\| + 9\Delta\omtan + 2\Delta\lambda\\
    \Rightarrow \quad &(\lambda-\mu)[1-(\bx\T v_t)^2]\bx\T v_{t-1} - 20\omtan\|v_t-\bx\| - 10\Delta\omtan - 2\Delta\lambda
    \leq 0
\end{align*}
Use the fact $c_0\wedge c_{1}\leq \bx\T v_{t-1}\leq 1$, and $\lambda-\mu=\nu$, the above inequality becomes
\begin{align}
\label{thm2_tan6}
    \frac{\nu(c_0\wedge c_{1})}{2}\|v_t-\bx\|^2-20\omtan\|v_t-\bx\| - 10\Delta\omtan - 2\Delta\lambda
    \leq 0 
\end{align}
The above inequality is a quadratic form of $\|v_t-\bx\|$. The discriminant $D = (20\omtan)^2 + 4\nu\Delta(c_0\wedge c_{1})(5\omtan+\lambda)\geq 0$, so there are values of $\|v_t-\bx\|$ to make \eqref{thm2_tan6} hold. 
By calculating the roots we get
\begin{align*}
    \|v_t-\bx\| \leq \frac{20\omtan + \sqrt{D}}{\nu(c_0\wedge c_{1})}
\end{align*}
Then we need to pick a suitable $\Delta$ such that $\sqrt{D}$ is the same order as $\omtan$. When $\Delta \leq \frac{\omtan}{(c_0\wedge c_{1})\nu} \wedge \frac{4\omtan^2}{(c_0\wedge c_{1})\lambda\nu}$, we have 
\begin{align*}
    20\nu\Delta(c_0\wedge c_{1})\omtan &\leq 20\omtan^2\\
    4\lambda\nu\Delta(c_0\wedge c_{1}) &\leq 16\omtan^2
\end{align*}
Thus
\begin{align*}
    D \leq 400\omtan^2 + 36\omtan^2 \leq (21\omtan)^2
\end{align*}
and 
\begin{align*}
    \|v_t-\bx\| \leq \frac{20\omtan + \sqrt{D}}{\nu(c_0\wedge c_{1})}
    \leq 
    \frac{41\omtan}{\nu(c_0\wedge c_{1})}
\end{align*}
Since the initial vector $v_0$ satisfies $v_0\T\bx\geq c_0$, it is always true that $v_t\T\bx\geq c_0$. Thus $\|v_t-\bx\|$ is always smaller than $\|v_t+\bx\|$. We finally get
\begin{align*}
    \|v_t-\bx\|\wedge\|v_t+\bx\| \leq \sqrt{\frac{8\omek}{(c_0\wedge c_{1})\nu}}\wedge\frac{41\omtan}{(c_0\wedge c_{1})\nu}
\end{align*}

%%%%%%%%
\subsection{Proof of Theorem \ref{bad_vt}}
\begin{align*}
    v_t\T Av_t & = v_t\T \bar{A}v_t + v_t\T Ev_t
    \geq \lambda(v_t\T \bx)^2 - \omek\\
    \tvt\T A\tvt & = \tvt\T \bar{A}\tvt + \tvt\T E\tvt \nonumber \\
    & \leq \lambda(\tvt\T \bx)^2 + \mu[1-(\tvt\T \bx)^2] + \omek
    \qquad \qquad \text{By 2. in Lemma \ref{ineqs}}
\end{align*}
By the above inequalities, when $\tvt\T A\tvt \geq v_t\T Av_t$, we have
\begin{align*}
    \lambda(v_t\T \bx)^2 - \omek
    & \leq 
    \lambda(\tvt\T \bx)^2 + \mu[1-(\tvt\T \bx)^2] + \omek \nonumber\\
    \Rightarrow \quad 
    \lambda(v_t\T \bx)^2-\lambda - 2\omek
    & \leq 
    \lambda(\tvt\T \bx)^2 -\lambda + \mu[1-(\tvt\T \bx)^2] \nonumber\\
    \Rightarrow \quad 
    (\lambda-\mu)[1-(\tvt\T \bx)^2] & \leq
    \lambda[1-(v_t\T \bx)^2] + 2\omek
\end{align*}
Notice that
\begin{align*}
    1-(v_t\T \bx)^2 = (1+v_t\T \bx)(1-v_t\T \bx) \leq 2(1-v_t\T \bx)
    = \|v_t - \bx\|^2
\end{align*}
Thus the previous inequality becomes
\begin{align}
    \nu[1-(\tvt\T \bx)^2] &\leq
    \lambda\|v_t - \bx\|^2 + 2\omek\nonumber\\
    \Rightarrow \quad
    \frac{\nu}{2}\|\tvt-\bx\|^2 &\leq
    \lambda\|v_t - \bx\|^2 + 2\omek\nonumber\\
    \Rightarrow \quad \|\tvt-\bx\|\wedge \|\tvt+\bx\| &\leq 
    \sqrt{
    \frac{2\lambda\|v_t-\bx\|^2}{\nu} + 
    \frac{4\omek}{\nu}
    }\nonumber\\
    & \leq 
    \sqrt{\frac{2\lambda}{\nu}}\|v_t-\bx\|+
    \sqrt{\frac{4\omek}{\nu}}
    \label{bvt_final}
\end{align}

By the result 2 and 3 in Lemma \ref{ineqs}, we get 
\begin{align}
    v_t\T Av_t -\bx\T E\bx & = v_t\T \bar{A}v_t + v_t\T Ev_t -\bx\T E\bx
    \geq \lambda(v_t\T \bx)^2 - 4\|v_t-\bx\|\omtan\label{vt_lower2}\\
    \tvt\T A\tvt -\bx\T E\bx & = \tvt\T \bar{A}\tvt + \tvt\T E\tvt -\bx\T E\bx \nonumber \\
    & \leq \lambda(\tvt\T \bx)^2 + \mu[1-(\tvt\T \bx)^2] + 4\|\tvt-\bx\|\omtan\label{bvt_upper2}\\
    \text{Or }\qquad \qquad 
    & \leq \lambda(\tvt\T \bx)^2 + \mu[1-(\tvt\T \bx)^2] + 4\|\tvt+\bx\|\omek\label{bvt_upper2_1} 
\end{align}
If $\tvt\T A\tvt \geq v_t\T Av_t$, and $\tvt\T \bx \geq 0$ such that $1-(\tvt\T \bx)^2 \geq \frac{1}{2}\|\tvt-\bx\|^2$, by (\ref{vt_lower2}), (\ref{bvt_upper2}) we have
\begin{align*}
    (\lambda-\mu)[1-(\tvt\T \bx)^2] - \lambda[1-(v_t\T \bx)^2] - 4\|v_t-\bx\|\omtan - 4\|\tvt-\bx\|\omtan &\leq 0\\
    \Rightarrow \quad
    \frac{\nu}{2}\|\tvt-\bx\|^2 -
    4\|\tvt-\bx\|\omtan  - 
    \lambda[1-(v_t\T \bx)^2] - 
    4\|v_t-\bx\|\omtan  
    &\leq 0
\end{align*}
Notice that
\begin{align*}
    1-(v_t\T \bx)^2 = (1+v_t\T \bx)(1-v_t\T \bx) \leq 2(1-v_t\T \bx)
    = \|v_t - \bx\|^2
\end{align*}
Thus the previous inequality becomes
\begin{align*}
    \frac{\nu}{2}\|\tvt-\bx\|^2 -
    4\omtan\|\tvt-\bx\|  - 
    \lambda\|v_t\T-\bx\|^2 - 
    4\|v_t-\bx\|\omtan  
    &\leq 0
\end{align*}
The discriminant $D = 16\omtan^2 + 
2\nu\big[\lambda\|v_t - \bx\|^2 + 4\|v_t-\bx\|\omtan\big] > 0$. Thus 
\begin{align*}
    \|\tvt-\bx\| & \leq \frac{4\omtan + \sqrt{D}}{\nu}\\
    & \leq \frac{8\omtan}{\nu} + 
    \sqrt{\frac{2\lambda\|v_t - \bx\|^2 + 8\|v_t-\bx\|\omtan}{\nu}}
\end{align*}
Similarly, if $\tvt\T A\tvt \geq v_t\T Av_t$, and $\tvt\T \bx \leq 0$ such that $1-(\tvt\T \bx)^2 \geq \frac{1}{2}\|\tvt+\bx\|^2$, by (\ref{vt_lower2}), (\ref{bvt_upper2_1}) we have 
\begin{align*}
    \frac{\nu}{2}\|\tvt+\bx\|^2 -
    4\omek\|\tvt+\bx\|  - 
    \lambda\|v_t\T-\bx\|^2 - 
    4\|v_t-\bx\|\omtan  
    &\leq 0
\end{align*}
The discriminant $D = 64\omek^2 + 
2\nu[\lambda\|v_t-\bx\|^2 + 4\|v_t-\bx\|\omtan] > 0$. Thus 
\begin{align*}
    \|\tvt+\bx\| & \leq \frac{4\omek + \sqrt{D}}
    {\nu}\\
    & \leq \frac{8\omek}{\nu} + 
    \sqrt{\frac{2\lambda\|v_t-\bx\|^2 + 8\|v_t-\bx\|\omtan}{\nu}}
\end{align*}
Thus 
\begin{align}
\label{bvt_final2}
    \|\tvt-\bx\| \wedge \|\tvt+\bx\| \leq
    \Big( 
    \frac{8\omek}{\nu} \vee 
    \frac{8\omtan}{\nu}
    \Big) + 
    \sqrt{\frac{2\lambda\|v_t-\bx\|^2 + 8\|v_t-\bx\|\omtan}{\nu}}
\end{align}
Therefore $\|\tvt-\bx\| \wedge \|\tvt+\bx\|$ is bounded by the minimum of RHS of \eqref{bvt_final} and \eqref{bvt_final2}.

%%%%%%%%
%%%%%%%%
\section{Proof of Theorem \ref{minimax} and Corollary \ref{minimax_coro}}

In order to prove Theorem \ref{minimax}, we need to introduce three intermediate results. 
% Calculate the trace which can be used in KL distance 
The first one is about trace of the product of inverse spike matrices, which will be used in calculating the Kullback-Leibler divergence of multivariate Gaussian. 
\begin{lemma} If $\|v\| = 1, \|v'\|=1$, for a constant $k$
\label{trace_eq}
\begin{align*}
    \text{tr}\big( (I+kv'{v'}\T )^{-1} (I+kvv\T )\big) = p +
  \frac{k^2}{k+1}[1-\langle v, v'\rangle^2 ]
\end{align*}
\end{lemma}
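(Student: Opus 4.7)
The plan is to apply the Sherman--Morrison formula to invert $I + k v' {v'}^\intercal$ explicitly, then expand the product and take the trace term-by-term using $\|v\| = \|v'\| = 1$ and cyclicity of the trace.

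First I would write
\[
(I + k v' {v'}^\intercal)^{-1} = I - \frac{k}{1+k} v' {v'}^\intercal,
\]
which follows from Sherman--Morrison since ${v'}^\intercal v' = 1$. Next I would multiply this out with $I + k v v^\intercal$ to obtain
\[
(I + k v' {v'}^\intercal)^{-1}(I + k v v^\intercal) = I + k v v^\intercal - \frac{k}{1+k} v' {v'}^\intercal - \frac{k^2}{1+k} v' {v'}^\intercal v v^\intercal.
\]

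Then I would apply the trace, using $\mathrm{tr}(I) = p$, $\mathrm{tr}(v v^\intercal) = \|v\|^2 = 1$, $\mathrm{tr}(v' {v'}^\intercal) = \|v'\|^2 = 1$, and the cyclic identity
\[
\mathrm{tr}(v' {v'}^\intercal v v^\intercal) = ({v'}^\intercal v)(v^\intercal v') = \langle v, v'\rangle^2.
\]
This gives
\[
p + k - \frac{k}{1+k} - \frac{k^2}{1+k}\langle v, v'\rangle^2.
\]

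Finally I would simplify $k - \tfrac{k}{1+k} = \tfrac{k^2}{1+k}$ to collect terms, yielding the claimed expression $p + \tfrac{k^2}{k+1}[1 - \langle v, v'\rangle^2]$. There is no real obstacle here; this is a short algebraic computation, and the only ``trick'' is recognizing that Sherman--Morrison gives a clean closed form for the inverse because both vectors are unit-norm.
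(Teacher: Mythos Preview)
Your proposal is correct and follows essentially the same approach as the paper: apply Sherman--Morrison to invert $I + k v'{v'}^\intercal$, expand the product, take the trace term-by-term, and simplify $k - \tfrac{k}{k+1} = \tfrac{k^2}{k+1}$.
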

\begin{proof}[Proof of Lemma \ref{trace_eq}]
By the Sherman-Morrison formula, 
\begin{align*}
    (I+kv'{v'}\T )^{-1} = 
    I - \frac{k}{k+1}v'{v'}\T 
\end{align*}
Thus 
\begin{align*}
    \text{tr}\big( (I+kv'{v'}\T )^{-1} (I+kvv\T )\big) & = 
    \text{tr}\big( (I - \frac{k}{k+1}v'{v'}\T )
    (I+kvv\T )\big) \\
    & = \text{tr}\big( 
    I + kvv\T  - \frac{k}{k+1}v'{v'}\T  - 
    \frac{k^2}{k+1}v'{v'}\T vv\T 
    \big) \\
    & = p + k - \frac{k}{k+1} - \frac{k^2}{k+1}\langle v, v'\rangle^2\\
    & = p + \frac{k^2}{k+1}[1-\langle v, v'\rangle^2 ]
\end{align*}
\end{proof}

% relation btw Gaussian complexity and metric entropy
% gaussian comp and diameter of set 
The second intermediate result Lemma \ref{sudakov_vari} upper bounds the Gaussian complexity of a set in terms of its metric entropy. Gaussian complexity is actually a basic geometric property. Lemma \ref{gw_property} reveals a relation between Gaussian complexity and diameter of a set, which is used in the proof of Lemma \ref{sudakov_vari}. 
\begin{lemma}
\label{gw_property}
\cite[Proposition~7.5.2]{vershynin2018high}
(Gaussian Complexity and Diameter)
Let $T\subset \mathbb{R}^p$, and $w(T)$ is the Gaussian Complexity of $T$. Then 
\begin{align*}
    \frac{1}{\sqrt{2\pi}} \leq \frac{w(T)}{\operatorname{diam}(T)} \leq \frac{\sqrt{p}}{2}
\end{align*}
\end{lemma}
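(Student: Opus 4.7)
\textbf{Proof proposal for Lemma \ref{gw_property}.}

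The plan is to handle the two inequalities separately. For the lower bound, the idea is that $w(T)$ always dominates the expected absolute deviation along any fixed chord of $T$, and one particular chord (the diameter-achieving one) already gives the desired factor $1/\sqrt{2\pi}$. For the upper bound, the plan is to symmetrize the supremum and apply Cauchy--Schwarz, picking up the factor $\mathbb{E}\|g\| \leq \sqrt{p}$.

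For the lower bound: fix $\varepsilon > 0$ and choose $s, t \in T$ with $\|s-t\| \geq \operatorname{diam}(T) - \varepsilon$. Then $\sup_{u\in T}\langle g, u\rangle \geq \max(\langle g, s\rangle, \langle g, t\rangle)$, and using the identity $\max(a,b) = \tfrac{a+b}{2} + \tfrac{|a-b|}{2}$ together with $\mathbb{E}\langle g, s\rangle = \mathbb{E}\langle g, t\rangle = 0$ yields
\begin{align*}
w(T) \;\geq\; \tfrac{1}{2}\,\mathbb{E}|\langle g, s-t\rangle| \;=\; \tfrac{1}{2}\|s-t\|\,\mathbb{E}|Z| \;=\; \tfrac{\|s-t\|}{\sqrt{2\pi}},
\end{align*}
where $Z \sim \mathcal{N}(0,1)$ and $\mathbb{E}|Z|=\sqrt{2/\pi}$. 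Sending $\varepsilon \downarrow 0$ gives $w(T)/\operatorname{diam}(T) \geq 1/\sqrt{2\pi}$.

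For the upper bound: because $g$ and $-g$ have the same distribution, one can symmetrize,
\begin{align*}
w(T) \;=\; \tfrac{1}{2}\mathbb{E}\bigl[\sup_{t\in T}\langle g, t\rangle + \sup_{s\in T}\langle -g, s\rangle\bigr] \;=\; \tfrac{1}{2}\mathbb{E}\sup_{s,t\in T}\langle g, t-s\rangle.
\end{align*}
Applying Cauchy--Schwarz inside the supremum, $\langle g, t-s\rangle \leq \|g\|\|t-s\| \leq \|g\|\operatorname{diam}(T)$, and then $\mathbb{E}\|g\| \leq \sqrt{\mathbb{E}\|g\|^2} = \sqrt{p}$ by Jensen's inequality, produces $w(T) \leq \tfrac{\sqrt{p}}{2}\operatorname{diam}(T)$.

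Neither step presents a serious obstacle; the only subtle point is recognising that the bare definition $w(T) = \mathbb{E}\sup_{t\in T}\langle g,t\rangle$ should be rewritten in symmetrised form before invoking Cauchy--Schwarz, so as to recover the correct constant $1/2$ in the upper bound. A naive Cauchy--Schwarz step (centring at some fixed $t_0 \in T$) would yield the weaker bound $w(T) \leq \sqrt{p}\,\operatorname{diam}(T)$, so the symmetrisation trick is what delivers the $\sqrt{p}/2$ claimed.
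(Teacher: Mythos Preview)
Your proof is correct. The paper does not actually supply its own proof of this lemma; it simply cites \cite[Proposition~7.5.2]{vershynin2018high}, and your argument (two-point lower bound via $\max(a,b)=\tfrac{a+b}{2}+\tfrac{|a-b|}{2}$, symmetrisation plus Cauchy--Schwarz for the upper bound) is precisely the standard proof found there.
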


Recall that the \textit{$\epsilon$-covering number} of a set $T$ is the cardinality of the smallest $\epsilon$-cover of $T$, and the logarithm of it is the metric entropy. The \textit{$\epsilon$-packing number} is the cardinality of the largest $\epsilon$-packing of $T$. The $\epsilon$-covering and $\epsilon$-packing of a set are in the same order \cite[Lemma~5.1]{wainwright2019high}. The following lemma gives upper bound of the Gaussian complexity using a chaining constructed by the covering sets. 

\begin{lemma}
\label{sudakov_vari}
\textbf{(Variation of Reverse Sudakov's Inequality)}
Let $T$ be a subset of $\mathbb{R}^p$ with finite diameter. 
Let $\log N_{\epsilon}$ be the metric entropy of $T$ with respect to $\epsilon$, and $w(T)$ is the Gaussian complexity of $T$ and $w(T) > 64\sqrt{\log 3}$. There exists a constant $C$, such that
\begin{align*}
    w(T) \leq 64\sqrt{\log 3} + C\log p \sup\limits_{\epsilon\geq 0, \,\,N_{\epsilon}\geq  4}\epsilon\sqrt{\log N_{\epsilon}}
\end{align*}
\end{lemma}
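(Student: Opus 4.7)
The plan is to establish the bound via Dudley's entropy integral, combined with a careful partition of the integration range that isolates three regimes: very small scales (where only the ambient volume bound applies), a middle range (where the sup $S := \sup_{\epsilon \geq 0,\, N_\epsilon \geq 4} \epsilon\sqrt{\log N_\epsilon}$ directly controls the integrand), and large scales (where $N_\epsilon \leq 3$).

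First I would invoke Dudley's entropy inequality for the canonical Gaussian process $\{\langle g, t\rangle\}_{t \in T}$ to obtain $w(T) \leq C_0 \int_0^{D} \sqrt{\log N_\epsilon}\, d\epsilon$ for $D := \mathrm{diam}(T)$ and a universal constant $C_0$. Since $N_\epsilon$ is non-increasing in $\epsilon$, the set $\{\epsilon : N_\epsilon \geq 4\}$ is some interval $(0, \epsilon_0]$, and by definition of $S$ one has $\epsilon_0 \sqrt{\log 4} \leq S$. I would introduce a second cutoff $\epsilon_1 := \epsilon_0 / p^{C_1}$ for a constant $C_1$ to be chosen, and split the integral into the three pieces $\int_0^{\epsilon_1} + \int_{\epsilon_1}^{\epsilon_0} + \int_{\epsilon_0}^{D}$.

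The middle piece is the easiest: on $(\epsilon_1, \epsilon_0]$ the definition of $S$ gives $\sqrt{\log N_\epsilon} \leq S/\epsilon$, so $\int_{\epsilon_1}^{\epsilon_0} \sqrt{\log N_\epsilon}\, d\epsilon \leq S \log(\epsilon_0/\epsilon_1) = C_1 \log p \cdot S$, producing the $\log p$ factor. For the tail $(0,\epsilon_1]$ I would apply the ambient volume bound $N_\epsilon \leq (3D/\epsilon)^p$, substitute $u = D/\epsilon$, and integrate by parts to obtain a contribution of order $\epsilon_1 \sqrt{p\log(D/\epsilon_1)}$; for $C_1$ chosen large enough this is absorbed into $O(S \log p)$. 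On the large-scale piece, $N_\epsilon \leq 3$ gives the trivial bound $\int_{\epsilon_0}^D \sqrt{\log N_\epsilon}\, d\epsilon \leq (D - \epsilon_0)\sqrt{\log 3} \leq D\sqrt{\log 3}$.

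The main obstacle will be this last large-scale term, since Lemma \ref{gw_property} only yields $D \leq \sqrt{2\pi}\, w(T)$, which feeds $w(T)$ itself back into the right-hand side. To close the argument I would combine the pieces into an inequality of the form $w(T) \leq C_0\bigl(D\sqrt{\log 3} + C \log p \cdot S\bigr)$, then carefully track the constants in Dudley's inequality so that the effective coefficient of $w(T)$ on the right is strictly below $1$, and rearrange. The hypothesis $w(T) > 64\sqrt{\log 3}$ is designed precisely to ensure that the residual boundary contribution from this rearrangement is bounded by $64\sqrt{\log 3}$, producing the additive constant in the stated bound; the $\log p$ factor is the number of dyadic scales separating the polynomial cutoff $\epsilon_1$ from the threshold $\epsilon_0$ below which $S$ ceases to be informative.
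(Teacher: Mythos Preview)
Your three-regime split and the middle-piece bound are right, but you have swapped the roles of the two extreme regimes, and your proposed fix for the large-scale term does not work. Absorbing $C_0 D\sqrt{\log 3}$ into the left side via $D \leq \sqrt{2\pi}\,w(T)$ would need $C_0\sqrt{2\pi}\sqrt{\log 3}<1$, i.e.\ Dudley's constant $C_0 < 0.38$; no version of Dudley's inequality has a constant anywhere near this small. In the paper the $\tfrac12 w(T)$ that gets moved to the left comes from the \emph{small}-scale end: the chaining is truncated at $\epsilon_n$ chosen so that $\epsilon_n \leq w(T)/(2\sqrt p)$, and the leftover $\mathbb{E}\sup_t\langle g,\,t-\pi_n(t)\rangle$ is bounded crudely by Cauchy--Schwarz as $\epsilon_n\,\mathbb{E}\|g\|\leq \epsilon_n\sqrt p\leq \tfrac12 w(T)$. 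The large-scale dyadic levels (those with $N_{\epsilon_i}\leq 3$) then form a geometric series bounded by $8\sqrt{\log 3}$---this uses $\operatorname{diam}(T)\leq 2$, which the paper's proof also assumes tacitly---and after the rearrangement this is exactly what produces the additive $64\sqrt{\log 3}$.

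Your tail piece is also not closed. The assertion that $\epsilon_1\sqrt{p\log(D/\epsilon_1)}$ is $O(S\log p)$ for $C_1$ large enough hides the requirement $\log(D/\epsilon_0)=O(\log p)$, equivalently $\epsilon_0\gtrsim D/p^{O(1)}$; nothing in your outline establishes this, and a priori $\epsilon_0$ can be much smaller than any fixed negative power of $p$. The paper's choice of cutoff at $w(T)/(2\sqrt p)$---which is independent of $\epsilon_0$---together with the Cauchy--Schwarz residual bound rather than the volume bound, is precisely what removes any dependence on $\epsilon_0$ and makes the absorption go through with a clean factor $\tfrac12$.
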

\begin{proof}[Proof of Lemma \ref{sudakov_vari}]
Let $X_t = \langle g,t\rangle$ where $t\in T,\,\,g\sim \mathcal{N}(0,I_p)$ to be a Gaussian process on $T$. By definition, $w(T) = \sup_{t\in K}\, X_t$.
\begin{enumerate}
    \item \textbf{Construct a chaining $\{\pi_{\nu}(t), ..., \pi_n(t)\}$ on $T$.} \\
    Let $\epsilon_i = 2^{-i},\, i\in\mathbb{Z}$, and $\mathcal{T}(T,\epsilon_i)$ is the min $\epsilon_i$-covering of $T$, and $|\mathcal{T}(T,\epsilon_i)| = N_{\epsilon_i}$. For any $t\in T$, let $\pi_i(t)$ be a point in $\mathcal{T}(T,\epsilon_i)$ satisfying $\|t-\pi_i(t)\|_2 \leq \epsilon_i$.
    
    We start with a covering which has only one point. Let $\nu = \max\limits_{i\in\mathbb{Z}}\{i:\,\,\epsilon_{i} \geq \operatorname{diam}(T)\}$, so $\mathcal{T}(T,\epsilon_{\nu}) = \{\pi_{\nu}(t)\}$. Since $\operatorname{diam}(T) \leq 2$ we have that $\nu \geq -1$. Next we choose $n$ such that $X_{\pi_n(t)}$ is close enough to $X_t$:
    \begin{align}
    \label{n_bound}
        n = \min_{i\in\mathbb{Z}} \{i:\,\,\frac{1}{2^i}\leq \frac{w(T)}{2\sqrt{p}}\}
    \end{align}
    Then 
    \begin{align*}
        X_t - X_{\pi_{\nu}(t)} =
        \sum\limits_{i=\nu}^n (X_{\pi_i(t)} - X_{\pi_{i-1}(t)}) + 
        (X_t - X_{\pi_n(t)})
    \end{align*}
    Since $\pi_{\nu}(t)$ doesn't depend on $t$, we have $\mathbb{E}X_{\pi_{\nu}(t)}=0$, thus 
    \begin{align}
    \label{sudakov_main}
        w(T) = \E\sup_{t\in T}X_t \leq 
        \E\sum\limits_{i=\nu}^n \sup_{t\in T}(X_{\pi_i(t)} - X_{\pi_{i-1}(t)}) + 
        \E\sup_{t\in T}(X_t - X_{\pi_n(t)})
    \end{align}
    
    \item \textbf{Upper bound (\ref{sudakov_main}) by the properties of chaining}. \\
    For a given $t\in T$, it is easy to see $X_{\pi_i(t)} - X_{\pi_{i-1}(t)}$ is a Lipschitz function of standard normal vector with Lipschitz factor $L=\|\pi_i(t)-\pi_{i-1}(t)\|_2 \leq 2\epsilon_{i-1}$. Thus 
    \begin{align*}
        X_{\pi_i(t)} - X_{\pi_{i-1}(t)} \sim
        SG(4\epsilon_{i-1}^2)
    \end{align*}
    The expected maximum of $N$ $SG(\nu^2)$ random variables is at most $\nu\sqrt{2\log N}$ \citep[page 31]{boucheron2013concentration}. We have $N=N_{\epsilon_i}N_{\epsilon_{i-1}}$, so that
    \begin{align}
    \label{sudakov_1}
        \E\sup_{t\in T}(X_{\pi_i(t)} - X_{\pi_{i-1}(t)}) & \leq  2\epsilon_{i-1} \sqrt{2\log(N_{\epsilon_i}N_{\epsilon_{i-1}})} \nonumber \\
        & \leq 4\epsilon_{i-1}
        \sqrt{\log N_{\epsilon_i}}
    \end{align}
    
    For the second term, 
    \begin{align}
    \label{sudakov_2}
        \E\sup_{t\in T}(X_t - X_{\pi_n(t)}) & = 
        \E\sup_{t\in T}\,\, \langle g, t - \pi_n(t)\rangle \nonumber\\
        & \leq \epsilon_n\E\|g\|_2 \nonumber\\
        & \leq \frac{w(T)}{2\sqrt{p}}\sqrt{p}
        \,\,= \frac{1}{2}w(T)
    \end{align}
    
      Combining (\ref{sudakov_main}), (\ref{sudakov_1}), (\ref{sudakov_2}) get 
    \begin{align}
    \label{sudakov_main_final}
        \frac{1}{2}w(T) & \leq 4
        \sum\limits_{i=\nu}^n \epsilon_{i-1}\sqrt{\log N_{\epsilon_i}} 
    \end{align}
    
    Notice that $n-\nu = \mathcal{O}(\log p)$ steps should be sufficient to walk from $X_{\pi_{\nu}(t)}$ to $X_t$: 
    \begin{align*}
    % \label{sudakov_length}
        n - \nu - 1& = \log_2  \frac{\epsilon_{\nu+1}}{\epsilon_n}\nonumber\\
        & \leq \log_2 \frac{\operatorname{diam}(T)}{w(T)/4\sqrt{p}} 
        \qquad\qquad \text{Since }\epsilon_{\nu+1} < \operatorname{diam}(T) \text{ and } \epsilon_n \geq \frac{w(T)}{4\sqrt{p}}
        \nonumber\\
        & \leq \log_2 (4\sqrt{2\pi}\sqrt{p})
        \qquad \qquad \text{Lemma \ref{gw_property}} 
        \nonumber\\
        & = C \log p
    \end{align*}
    Let $i^* = \min\limits_{i\in[\nu,n]}\{ N_{\epsilon_i} \geq 4\}$. We can always find such an $i^*$, since if all $N_{\epsilon_i} \leq 3$ by \eqref{sudakov_main_final} we have
    \begin{align*}
        \frac{1}{2}w(T) & \leq 4
        \sum\limits_{i=\nu}^n \epsilon_{i-1}\sqrt{\log 3} \leq 32 \sqrt{\log 3} ,
    \end{align*}
    where in the last inequality we used the fact that $\nu \geq -1$. This is a contradiction with our assumption. %Since $\epsilon_n$ is small enough by having $\epsilon_n\leq w(T)/4\sqrt{p}$ in \eqref{n_bound}, $N_{\epsilon_n}$ should be large enough.
    % otherwise we can have a counterexample for \eqref{sudakov_main_final}. {\color{blue} This doesn't sound correct! You cannot assume $T = $ unit sphere!} Suppose $T=\mathbb{S}^{p-1}$ is the unit sphere, so $w(T)\asymp\sqrt{p}$. If for all $i\leq n$ we have $N_{\epsilon_i}<4$, the inequality \eqref{sudakov_main_final} won't hold. 
    With such an $i^*$ the inequality (\ref{sudakov_main_final}) can be written as 
    \begin{align*}
        \frac{1}{8}w(T) & \leq 
        \sum\limits_{i=\nu}^{i^*-1} 2^{-(i-1)}\sqrt{\log 3} +
        (n + 1 - i^*) \sup\limits_{\epsilon\geq 0, \,\,N_{\epsilon}\geq 4}\epsilon\sqrt{\log N_{\epsilon}} \\
        & \leq 8\sqrt{\log 3} + 
        C\log p\sup\limits_{\epsilon\geq 0, \,\,N_{\epsilon}\geq 4}\epsilon\sqrt{\log N_{\epsilon}}
    \end{align*}
\end{enumerate}
\end{proof}

The last intermediate result we need is the generalized Fano's inequality for multi sample setting. Suppose that we know a random variable $Y$ and want to estimate another random variable $X$ based on $Y$. Fano's inequality quantifies the estimation uncertainty in terms of the conditional entropy of $X$ on $Y$ \cite[Page~37]{cover2012elements}. The generalized Fano's method, derived from the original Fano's inequality, is widely used in statistics literature to provide a lower bound of the estimation error \cite[Lemma~3]{yu1997assouad}. \citet[Lemma~3]{yu1997assouad} proved the generalized Fano's inequality for a single sample setting. With the same idea, we can get a multi sample version of generalized Fano's method 
\begin{lemma}(Generalized Fano's Method) 
\label{fano_yu}
Let $r\geq 2$ be an integer and let $\mathcal{M}_r\subset\mathcal{P}$ contain $r$ probability measures indexed by $j=1,2,...,r$. Let $X_1,...,X_n$ be a collection of i.i.d. random variables with the conditional distribution $P_j$ when $j$ is given. $\theta(P_j)$ is a parameter of $P_j$, and $\hat{\theta}$ is an estimation based on the sample $X_1,...,X_n$.
The probability measures in $\mathcal{M}_r$ also satisfy that for all $j\neq j'$
\begin{align*}
    d(\theta(P_j), \theta(P_{j'})) \geq \alpha_r,
\end{align*}
and 
\begin{align*}
    D(P_j||P_{j'}) \leq \beta_r
\end{align*}
Then 
\begin{align*}
    \max\limits_{j}\mathbb{E}\big[d(\hat{\theta}, \theta(P_j))\big]
    \geq 
    \frac{\alpha_r}{2}(1-\frac{n\beta_r + \log2}{\log r})
\end{align*}
\end{lemma}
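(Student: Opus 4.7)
The plan is to reduce the estimation problem to a multi-way hypothesis testing problem, invoke the classical (single-sample) Fano inequality, and handle the $n$ i.i.d.\ samples via tensorization of the Kullback--Leibler divergence. This is a direct multi-sample analogue of the single-sample argument in \citet[Lemma~3]{yu1997assouad}.

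First I would introduce a uniformly distributed random index $J$ on $\{1,\dots,r\}$ and view $X_1,\dots,X_n$ as drawn i.i.d.\ from $P_J$ conditionally on $J$. Define the minimum-distance test $\hat J := \argmin_{j\in\{1,\dots,r\}} d(\hat\theta,\theta(P_j))$. By the triangle inequality together with the separation hypothesis $d(\theta(P_j),\theta(P_{j'}))\geq \alpha_r$ for $j\neq j'$, whenever $d(\hat\theta,\theta(P_J)) < \alpha_r/2$ the unique minimiser is $J$ itself, so $\hat J = J$. Contrapositively, $\mathbb{P}(\hat J \neq J) \leq \mathbb{P}(d(\hat\theta,\theta(P_J))\geq \alpha_r/2)$. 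Combining this with Markov's inequality and the trivial bound $\max_j \mathbb{E}[d(\hat\theta,\theta(P_j))] \geq \mathbb{E}[d(\hat\theta,\theta(P_J))]$ yields
\[
\max_j \mathbb{E}[d(\hat\theta,\theta(P_j))] \;\geq\; \frac{\alpha_r}{2}\,\mathbb{P}(\hat J \neq J).
\]

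Next I would apply the standard Fano inequality to the pair $(J,\hat J)$, treating $\hat J$ as a function of $(X_1,\dots,X_n)$, to obtain
\[
\mathbb{P}(\hat J \neq J) \;\geq\; 1 - \frac{I(J;X_1,\dots,X_n) + \log 2}{\log r}.
\]
It then remains to bound the mutual information by $n\beta_r$. Setting $\bar P^n := \frac{1}{r}\sum_{j'} P_{j'}^{\otimes n}$ one has $I(J;X_1,\dots,X_n) = \frac{1}{r}\sum_{j} D(P_j^{\otimes n}\,\|\,\bar P^n)$. Convexity of $D(\cdot\,\|\,\cdot)$ in its second argument gives $D(P_j^{\otimes n}\,\|\,\bar P^n) \leq \frac{1}{r}\sum_{j'} D(P_j^{\otimes n}\,\|\,P_{j'}^{\otimes n})$, and by tensorization of KL for product measures $D(P_j^{\otimes n}\,\|\,P_{j'}^{\otimes n}) = n\,D(P_j\,\|\,P_{j'}) \leq n\beta_r$. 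Substituting into the two previous displays produces the stated inequality.

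The only non-routine ingredient is the mutual information bound for the product measures, but both the convexity step and the tensorization identity are standard information-theoretic facts. The only mild subtlety I would want to verify is that $d$ can be taken to satisfy the triangle inequality (as a metric or at least a pseudo-metric), which is implicit in the statement and compatible with the loss functions used elsewhere in the paper.
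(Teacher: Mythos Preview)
Your argument is correct and is exactly the standard route: reduce to testing via a uniform prior on $J$, use the minimum-distance decoder together with the triangle inequality and the separation condition, apply Fano's inequality to bound $\mathbb{P}(\hat J\neq J)$, and then control the mutual information by convexity of KL in its second argument plus the tensorization identity $D(P_j^{\otimes n}\|P_{j'}^{\otimes n})=nD(P_j\|P_{j'})$.

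The paper does not actually supply its own proof of this lemma: it simply cites \citet[Lemma~3]{yu1997assouad} for the single-sample case and remarks that ``with the same idea'' one obtains the multi-sample version. Your write-up is precisely that ``same idea,'' with the only new ingredient being the tensorization of KL for product measures, which is the obvious way to pass from one sample to $n$. Your caveat about $d$ needing to satisfy the triangle inequality is well placed; in the paper's applications $d$ is either the Euclidean distance or $\|\hat v-\bar x\|\wedge\|\hat v+\bar x\|$, both of which are pseudo-metrics, so this assumption is harmless here.
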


% Main Theorem 
Based on the previous three intermediate results, we are able to analyze the lower bound of the $L_2$ error rate of the eigenvector estimator in spiked covariance model. The lower bound is related to the Gaussian complexity of $K\bigcap \mathbb{S}^{p-1}$, where $K$ is the convex cone containing the true eigenvector $\bx$.

%%%%%%%%
\subsection{Proof of Theorem \ref{minimax}}
\begin{enumerate}
    \item Let $\mathcal{P}(K\bigcap\bbs^{p-1}, \epsilon)$ be the max  $\epsilon$-packing of $K\bigcap\bbs^{p-1}$ such that for any $v_i, v_j\in \mathcal{P}(K\bigcap\bbs^{p-1},\epsilon)
    \,\text{ and }
    v_i\neq v_j$ we have $\|v_i-v_j\|\geq \epsilon$. Let the $\epsilon$-packing number be $M_{\epsilon} = |\mathcal{P}(K\bigcap\bbs^{p-1}, \epsilon)|$.

    \item Let $P_v = \mathcal{N}(0, I + \nu vv\T)$. By Lemma \ref{trace_eq}, the Kullback-Leibler distance between $P_{v_i}$ and $P_{v_j}$ for any $v_i, v_j\in \mathcal{P}(K\bigcap\bbs^{p-1})
    \,\text{ and }
    v_i\neq v_j$ is  
    \begin{align*}
        D(P_{v_i}\|P_{v_j}) & = \frac{1}{2}\Big[
        \text{tr}\big( (I+\nu v_jv_j\T )^{-1} (I+\nu v_iv_i\T )\big) - p\Big] \\
        & = \frac{\nu^2}{2(\nu+1)}[1-\langle v_i, v_j\rangle^2]
        \coloneqq D
    \end{align*}
    
    \item By Lemma \ref{fano_yu}, and $D = \frac{\nu^2}{2(\nu+1)}[1-\langle v_i, v_j\rangle^2] \leq \frac{\nu^2}{2(\nu+1)} \lesssim \frac{\nu\wedge \nu^2}{2}$, we have
    \begin{align*}
    \inf\limits_{\hat{v}}\max\limits_{v_j\in \mathcal{P}(K\bigcap\bbs^{p-1}, \epsilon)} 
    \E\|\hat{v} - v_j\| &\geq 
    \frac{\epsilon}{2}
    (1-\frac{ nD+\log2}{\log M_{\epsilon}}) \\
    & \gtrsim \frac{\epsilon}{2}
    (1-\frac{n (\nu\wedge \nu^2)}{2\log M_{\epsilon}}
    -\frac{\log 2}{\log M_{\epsilon}}) 
    \end{align*}
Let $N_{\epsilon}$ be the $\epsilon$-covering number of set $K\bigcap\bbs^{p-1}$. By the fact $N_{\epsilon}\leq M_{\epsilon}$ \cite[Lemma~5.1]{wainwright2019high}, the above inequality becomes
\begin{align} \label{fano}
    \inf\limits_{\hat{v}}\max\limits_{v_j\in \mathcal{P}(K\bigcap\bbs^{p-1}, \epsilon)} 
    \E\|\hat{v} - v_j\| 
    & \gtrsim
    \frac{\epsilon}{2}
    (1-\frac{n (\nu\wedge \nu^2)}{2\log N_{\epsilon}}
    -\frac{\log 2}{\log N_{\epsilon}})
\end{align}

\item 
Let $\epsilon^* = \argmax\limits_{\epsilon\geq0, \,\, N_{\epsilon}\geq 4} \epsilon \sqrt{\log N_{\epsilon}}$. By Lemma \ref{sudakov_vari}, when $w(K\bigcap\bbs^{p-1})\geq 64\sqrt{\log3}$, there exists $C_1 > 0$ such that
\begin{align}
\label{lower_eps_star}
    \epsilon^* \geq \frac{w(K\bigcap\bbs^{p-1})}{C_1\log{p}\sqrt{\log N_{\epsilon^*}}}
\end{align}
Plug in $\epsilon=\epsilon^*$ into (\ref{fano}). Notice that $N_{\epsilon^*}\geq 4$ so $\frac{\log 2}{\log N_{\epsilon^*}}\leq\frac{1}{2}$, we get
\begin{align*}
    \inf\limits_{\hat{v}}\max\limits_{v_j\in \mathcal{P}(K\bigcap\bbs^{p-1}, \epsilon^*)}
    \E\|\hat{v} - v_j\|
    &\gtrsim \frac{\epsilon^*}{2}
    (\frac{1}{2}-\frac{n(\nu\wedge \nu^2)}{2\log N_{\epsilon^*}})
\end{align*}

Pick $n(\nu\wedge \nu^2) = \log N_{\epsilon^*}/2$. Then we have
%Pick $\nu = \frac{\log N_{\epsilon^*}}{2n} \vee \sqrt{\frac{\log N_{\epsilon^*}}{2n}}$. Then we have $\log N_{\epsilon^*} = 2n(\nu\wedge \nu^2)$, so that
\begin{align*}
    \frac{n(\nu\wedge \nu^2)}{2\log N_{\epsilon^*}} = \frac{1}{4},
\end{align*}
and the lower bound becomes
\begin{align*}
    \inf\limits_{\hat{v}}\max\limits_{v_j\in \mathcal{P}(K\bigcap\bbs^{p-1},\epsilon^*)}
    \E\|\hat{v} - v_j\| \gtrsim \frac{\epsilon^*}{8}
\end{align*}
Plug in \eqref{lower_eps_star} to get
\begin{align*}
    \inf\limits_{\hat{v}}\max\limits_{v_j\in \mathcal{P}(K\bigcap\bbs^{p-1},\epsilon^*)}
    \E\|\hat{v} - v_j\| \gtrsim \frac{1}{8C_1}\frac{w(K\bigcap\bbs^{p-1})}{\log p \sqrt{\log N_{\epsilon^*}}}
\end{align*}
By $\log N_{\epsilon^*} = 2n(\nu\wedge \nu^2)$, the lower bound becomes
\begin{align*}
    \inf\limits_{\hat{v}}\max\limits_{v_j\in \mathcal{P}(K\bigcap\bbs^{p-1},\epsilon^*)}
    \E\|\hat{v} - v_j\| \gtrsim \frac{1}{16C_1}\frac{w(K\bigcap\bbs^{p-1})}{\log p \sqrt{n(\nu\wedge \nu^2)}}
\end{align*}
and we finally get 
$$\inf\limits_{\hat v} \sup\limits_{\bx \in K \bigcap \mathbb{S}^{p-1}} \mathbb{E} \|\hat v - \bx\| \geq \inf\limits_{\hat{v}}\max\limits_{v_j\in \mathcal{P}(K\bigcap\bbs^{p-1}, \epsilon^*)} 
\E\|\hat{v} - v_j\| 
\gtrsim
\frac{w(K\bigcap\bbs^{p-1})}{\log p(\nu\wedge\sqrt{\nu})\sqrt{n}}$$

%Moreover, by choosing $\nu = \frac{\log N_{\epsilon^*}}{2n} \vee \sqrt{\frac{\log N_{\epsilon^*}}{2n}}$, there is an implicit upper bound of $\nu$. 

Finally we show the last implication of the theorem. By Sudakov's inequality \cite[Theorem~8.1.13]{vershynin2018high}, $\log N_{\epsilon^*} \lesssim (\frac{w(K\bigcap\bbs^{p-1})}{\epsilon^*})^2$, and in the proof of Lemma \ref{sudakov_vari} we showed $\epsilon^* \geq \frac{w(K\bigcap\bbs^{p-1})}{4\sqrt{p}}$. 
Thus $\log N_{\epsilon^*}\lesssim 16 p$, and the theorem follows. %$\nu = \frac{\log N_{\epsilon^*}}{2n} \vee \sqrt{\frac{\log N_{\epsilon^*}}{2n}} \leq \frac{8p}{n} \vee \sqrt{\frac{8p}{n}}$. 
\end{enumerate}

%%%%%%%%%%%%%%
\subsection{Proof of Corollary \ref{minimax_coro}}
Based on the proof of Theorem \ref{minimax}, if we find a set $K'\subseteq K$ such that all vector pairs in $K'$ have a positive dot product, the packing set we choose would become $\mathcal{P}(K'\cap\mathbb{S}^{p-1}, \epsilon)$. The consequent proofs are the same as that of Theorem \ref{minimax}. Since all vector pairs in $K'$ have positive dot product, finally we get
$$\inf\limits_{\hat v} \sup\limits_{\bx \in K \bigcap \mathbb{S}^{p-1}} \mathbb{E} \|\hat v - \bx\|\wedge\|\hat v - \bx\| = 
\inf\limits_{\hat v} \sup\limits_{\bx \in K \bigcap \mathbb{S}^{p-1}} \mathbb{E} \|\hat v - \bx\|
\gtrsim
\frac{w(K'\bigcap\bbs^{p-1})}{\log p(\nu\wedge\sqrt{\nu})\sqrt{n}}$$

%%%%%%%%
%%%%%%%%
\section{Proof of Lemma \ref{gw_nonneg:lemma}}
\begin{proof}
By the definition $w(K^+\bigcap\mathbb{S}^{p-1})=\mathbb{E}\sup_{t\in K^+\bigcap\mathbb{S}^{p-1}}\langle g, t\rangle$, where $g \sim \mathcal{N}(0,I_p)$. This is equivalent with 
\begin{align*}
    \mathbb{E}\sup_{t\in\mathbb{S}^{p-1}}\langle g', t\rangle
\end{align*}
where the distribution of each $g_i'$ is a mixture of a folded standard normal and constant zero $\mathbb{P}(g_i'=|Z|) = \frac{1}{2}\quad\text{and}\quad\mathbb{P}(g_i'=0) = \frac{1}{2}$. For a standard normal random variable $Z$ the following tail bound holds
\begin{align*}
    \mathbb{P}[Z>t] & = \frac{1}{\sqrt{2\pi}}\int_t^{+\infty}e^{-\frac{x^2}{2}}dx\\
    & = \frac{1}{\sqrt{2\pi}}\int_0^{\infty}e^{-\frac{(t+y)^2}{2}}dy\\
    & \leq \frac{1}{\sqrt{2\pi}}e^{-\frac{t^2}{2}}\int_0^{\infty}e^{-ty}dy = \frac{1}{\sqrt{2\pi}}\frac{1}{t}e^{-\frac{t^2}{2}}
\end{align*}
When $t\geq1$, the above bound reduce to $\mathbb{P}[Z>t]\leq e^{-\frac{t^2}{2}}$; when $t<1$, we always have $\mathbb{P}[Z>t]\leq2e^{-\frac{t^2}{2}}$. Thus $\mathbb{P}[Z>t]\leq 2e^{-\frac{t^2}{2}}$ for $t>0$. Since $\mathbb{P}[|g_i|\geq t]=\mathbb{P}[g_i\geq t]=\frac{1}{2}\mathbb{P}[|Z|>t]=\mathbb{P}[Z>t]$, we have
\begin{align*}
    \mathbb{P}[|g_i|\geq t] \leq 2\exp(-t^2/2)
\end{align*}
By the definition of sub-Gaussian in terms of Orlicz norm \citep[(2.14)]{vershynin2018high}, the above inequality implies that $g_i'$ is sub-Gaussian with $\|g_i'\|_{\psi_2}=C$, where $C$ is an absolute constant. And since $\mathbb{E} (g_i')^2=\frac{1}{2}$, we have $\sqrt{2}\|g'\|-\sqrt{p}$ is also a sub-Gaussian random variable satisfying \citep[Theorem 3.1.1]{vershynin2018high}:
\begin{align*}
    \mathbb{P}\Big[\big|\|g'\|-\sqrt{\frac{p}{2}}\big|\geq t\Big] \leq 2\exp(-C_2t^2/\|g_i'\|_{\psi_2}^4) = 2\exp(-C_2t^2/C^4)
\end{align*}
where $C_2$ is an absolute constant. A simple integral shows that
\begin{align*}
    \mathbb{E}\Big|\|g'\|-\sqrt{\frac{p}{2}}\Big| & \leq \int_0^{\infty}\mathbb{P}\Big[\big|\|g'\|-\sqrt{\frac{p}{2}}\big|\geq t\Big]dt\\
    & \leq \int_0^{\infty}2\exp(-C_2t^2/C^4)dt\\
    & = C^2\sqrt{\pi/C_2}
\end{align*}
so that
\begin{align*}
    \mathbb{E}\|g'\|\asymp\sqrt{p}
\end{align*}
And then
\begin{align*}
    w(K^+\bigcap\mathbb{S}^{p-1}) = \mathbb{E}\sup_{t\in\mathbb{S}^{p-1}}\langle g', t\rangle = \mathbb{E}\|g'\|_2\asymp\sqrt{p}
\end{align*}
\end{proof}

%%%%%%%%
%%%%%%%%
\section{Proof of Proposition \ref{example_lower}}

    Now we construct a special example of monotone cone to calculate the lower bound of $L_2$ error rate. This example $K_2$ consists of vectors having three constant pieces. And the pairwise distance between vectors in $K_2$ is always greater than $\sqrt{2}\epsilon$.
\begin{example}
\label{k2}
Let $K_2 = \{a_i\}_{i=0}^{p-2}$ be a subset of $\mathbb{R}^p$ where
\begin{align*}
    a_i = \Big(
    \underbrace{0,..,0}_i, 
    \underbrace{\frac{\epsilon}{\sqrt{p-1-i}}, ..., \frac{\epsilon}{\sqrt{p-1-i}}}_{p-1-i}, \sqrt{1-\epsilon^2}
    \Big)
\end{align*}
\end{example}
Before investigating the lower bound, we need to introduce a result which tells the order of metric entropy of $K_2$. 

\begin{lemma}
\label{k2_packing}
For a monotone cone $K_2$ as defined in Example \ref{k2}, the cardinality of the maximum $\frac{\epsilon}{2}$-packing set is of order $\mathcal{O}\big(\log p\big)$.
\end{lemma}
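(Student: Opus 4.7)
The plan is to compute $\|a_i - a_j\|^2$ in closed form and then reduce the packing problem to a counting exercise about geometric progressions in $\{1, 2, \ldots, p-1\}$. Since the vectors in $K_2$ are parametrized by a single integer index, the distance should simplify to something depending only on the two integer parameters, and the packing condition should become a ratio constraint.

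First I would fix $i < j$ and abbreviate $n_\ell := p - 1 - \ell$. Splitting the sum over the $p$ coordinates into the three natural regions -- the first $i$ coordinates where both vectors vanish, the $n_i - n_j = j - i$ coordinates where only $a_i$ is nonzero, and the $n_j$ coordinates where both are nonzero but with different values -- and noting that the last coordinate contributes zero because both vectors carry $\sqrt{1 - \epsilon^2}$ there, I obtain
\begin{align*}
    \|a_i - a_j\|^2
    &= (n_i - n_j)\cdot\frac{\epsilon^2}{n_i} + n_j\cdot\epsilon^2\left(\frac{1}{\sqrt{n_i}} - \frac{1}{\sqrt{n_j}}\right)^2 \\
    &= \epsilon^2\left[1 - \frac{n_j}{n_i} + \frac{n_j}{n_i} - \frac{2n_j}{\sqrt{n_i n_j}} + 1\right]
    = 2\epsilon^2\left(1 - \sqrt{\tfrac{n_j}{n_i}}\right).
\end{align*}

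Next I would translate the $\tfrac{\epsilon}{2}$-packing requirement $\|a_i - a_j\| \geq \tfrac{\epsilon}{2}$ into the clean ratio condition $n_j/n_i \leq 49/64$ (for $i < j$, i.e.\ $n_j < n_i$). So a packing corresponds precisely to a strictly decreasing sequence $n_{i_1} > n_{i_2} > \ldots > n_{i_M}$ in $\{1,\ldots,p-1\}$ with every consecutive ratio at most $49/64$.

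Finally I would do the counting both ways. For the upper bound: iterating the ratio gives $1 \leq n_{i_M} \leq (49/64)^{M-1} n_{i_1} \leq (49/64)^{M-1}(p-1)$, so $M \leq 1 + \log(p-1)/\log(64/49) = \mathcal{O}(\log p)$. For matching lower bound (needed later in the Fano argument for Proposition \ref{example_lower}), take $n_{i_k} := \lceil (49/64)^{k-1}(p-1)\rceil$ for $k = 1,\ldots,\lfloor 1 + \log(p-1)/\log(64/49)\rfloor$, which yields a valid packing of size $\Theta(\log p)$. The only real obstacle is the initial distance computation -- once $\|a_i - a_j\|^2$ has been reduced to a function of a single ratio $n_j/n_i$, the rest is a routine geometric-progression count.
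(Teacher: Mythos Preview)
Your approach is essentially the paper's: compute the pairwise distance in closed form, reduce the packing condition to a ratio constraint on the integers $n_\ell = p-1-\ell$, and count via a geometric progression. Your version is in fact cleaner, since you keep the exact condition $n_j/n_i \le 49/64$ (equivalently $\sqrt{n_j/n_i}\le 7/8$), whereas the paper passes through a looser linearized bound to arrive at the constant $7/8$ on the ratio itself.

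One small technical point: your explicit lower-bound construction $n_{i_k} := \lceil (49/64)^{k-1}(p-1)\rceil$ does \emph{not} always satisfy the consecutive-ratio condition. For instance, if $(49/64)^{k-1}(p-1)$ happens to equal an integer $m$, then $n_{i_k}=m$ while $n_{i_{k+1}}=\lceil (49/64)m\rceil$ can exceed $(49/64)m$; e.g.\ $m=2$ gives $n_{i_{k+1}}=\lceil 49/32\rceil = 2 = n_{i_k}$. The fix is trivial: define the sequence recursively by $n_{i_1}=p-1$ and $n_{i_{k+1}}=\lfloor (49/64)\,n_{i_k}\rfloor$, which enforces the ratio bound by construction and still yields $\Theta(\log p)$ terms before reaching $1$ (since $n_{i_{k+1}} \ge (49/64)n_{i_k}-1 \ge n_{i_k}/2$ whenever $n_{i_k}\ge 4$). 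With that adjustment your argument is complete.
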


\begin{proof}[Proof of Lemma \ref{k2_packing}] 

With out loss of generality, let $i' > i$. 
\begin{align*}
    ||a_i-a_{i'}||^2 & = 2 - 2a_i^Ta_{i'}\\
    & = 2 - 2\big[ 
    \epsilon^2\frac{\sqrt{p-1-i'}}{\sqrt{p-1-i}}
    +1-\epsilon^2
    \big] \\
    & = 2\epsilon^2\big[
    \frac{p-1-i-\sqrt{p-1-i'}\sqrt{p-1-i}}{p-1-i}
    \big]\\
    & \geq \frac{2\epsilon^2(i'-i)}{p-1-i}
\end{align*}
To let $||a_i-a_{i'}||\geq \frac{\epsilon}{2}$, we need $\frac{i'-i}{p-1-i} \geq \frac{1}{8}
\quad \Rightarrow \quad 
i'\geq \frac{1}{8}(p-1)+\frac{7}{8}i$.
\vspace{9pt} \quad \\
Thus the max $\frac{\epsilon}{2}$-packing $\{a_{c_k}\}_{k=0}^n$ can be constructed by a sequence $S_c = \{c_k\}_{k=0}^n$ where 
\begin{align*}
    c_0 & = 0 \\
    c_{k+1} &=
    \lceil 
    \frac{1}{8}(p-1)+\frac{7}{8}c_k
    \rceil \\
    c_n & \leq p - 2 \\
    \frac{(p-2)-c_n}{p-1-c_n} < \frac{1}{8}
    \quad \Rightarrow \quad 
    c_n & \geq p - \frac{15}{7} 
\end{align*}

\begin{itemize}
    \item \textbf{Lower bound of $|S_c|$:} \\
    In order to get a lower bound of $|S_c|$, construct another sequence $S_b = \{b_k\}_{k=0}^m$ such that 
\begin{align*}
    b_0 & = 0 \\
    b_{k+1} &=
    \frac{1}{8}(p-1)+\frac{7}{8}b_k + 1 \\
    b_m & \geq p - \frac{15}{7} 
\end{align*}
It is easy to see $|S_b| = |S_c|$ since $c_k \leq b_k$ for all $k$. \vspace{4pt}\\
Furthermore one can get $b_{k+2}-b_{k+1} = \frac{7}{8}(b_{k+1}-b_k)$ from the above equations. Then
\begin{align*}
    b_m & = (b_m - b_{m-1}) + 
    ... + (b_1 - b_0) \\
    & = (b_1-b_0)(\frac{7}{8})^{m-1} + 
    ... + (b_1-b_0)(\frac{7}{8}) + 
    (b_1-b_0) \\
    & = (\frac{1}{8}p+\frac{7}{8})\big[ 
    (\frac{7}{8})^0 + 
    (\frac{7}{8})^1+
    ...+(\frac{7}{8})^{m-1}
    \big] \\
    & = (p+7)[1-(\frac{7}{8})^{m}]
\end{align*}

Notice that $b_m \geq p-\frac{15}{7}$ is required. Thus 
\begin{align*}
    b_m = (p+7)[1-(\frac{7}{8})^m] \geq p - \frac{15}{7}
    \qquad \Rightarrow \qquad 
    m \geq \frac{\log(p+7)}{\log \frac{8}{7}} -
    \frac{\log(\frac{15}{7} + 7)}{\log\frac{8}{7}}
\end{align*}
Therefore $|S_c| = |S_b| = \mathcal{O}(\log p)$.

\item \textbf{Upper bound of $|S_c|$:} \\
In order to get an upper bound of $|S_c|$, construct another sequence $S_d = \{d_k\}_{k=0}^l$ such that 
\begin{align*}
    d_0 & = 0 \\
    d_{k+1} &=
    \frac{1}{8}(p-1)+\frac{7}{8}d_k \\
    d_l & \leq p - 2 
\end{align*}
It is easy to see $|S_d| = |S_c|$ since $c_k \geq d_k$ for all $k$. \vspace{4pt}\\
Similar as the proof of lower bound of $|S_c|$, from $d_{k+1} =
\frac{1}{8}(p-1)+\frac{7}{8}d_k$ we derive the expression of $d_l$ as 
\begin{align*}
    d_l = (p+7)[1-(\frac{7}{8})^{l}]
\end{align*}
By $d_l\leq p - 2$ we have 
\begin{align*}
    d_l = (p+7)[1-(\frac{7}{8})^{l}] \leq p - 2
    \qquad \Rightarrow \qquad 
    l \leq \frac{\log(p+7)}{\log\frac{8}{7}} + 
    \frac{\log9}{\log\frac{8}{7}}
\end{align*}
Therefore $|S_c| = |S_d| = \mathcal{O}(\log p)$.
\end{itemize}
\end{proof}

%%%%%%%%%
\subsection{Proof of Proposition \ref{example_lower}}

Let $\mathcal{P}(K_2,\frac{\epsilon}{2})$ be the max $\frac{\epsilon}{2}$-packing of $K_2$, and $M_{\frac{\epsilon}{2}} = |\mathcal{P}(K_2,\frac{\epsilon}{2})|$. Define a distance function $d(v,v')=\|v - v'\|\wedge\|v + v'\|$. For any $v, v'\in \mathcal{P}(K_2,\frac{\epsilon}{2})$ with $v\neq v'$, we have $v\T v'\geq 0$. Then
\begin{align*}
    d(v,v') = \|v - v'\| \geq \frac{\epsilon}{2}
\end{align*}
And by the nature of the construction of $K_2$ in Example \ref{k2}
\begin{align*}
    d(v,v') = \|v - v'\| = \sqrt{2 - 2v\T v'} = 
    \sqrt{ 2 - 2\big[ 
    \epsilon^2\frac{\sqrt{p-1-i'}}{\sqrt{p-1-i}}
    +1-\epsilon^2
    \big] }
    = \sqrt{ 2\epsilon^2\big[
    1 - \frac{\sqrt{p-1-i'}}{\sqrt{p-1-i}}
    \big] }
    \leq \sqrt{2}\epsilon
\end{align*}

By Lemma \ref{fano_yu}, the lower bound of minimax risk is derived by Fano's method
\begin{align*}
    \inf\limits_{\hat{v}}\max\limits_{v_j\in \mathcal{P}(K_2,\frac{\epsilon}{2}) } 
    \E[\|\hat{v} - v_j\|\wedge\|\hat{v} + v_j\|] 
    &\gtrsim 
    \frac{\epsilon}{4}(1-\frac{ nD+\log2}{\log M_{\frac{\epsilon}{2}}}) 
\end{align*}
The Kullback-Leibler divergence of $P_v, P_v'$ can be upper bounded as
\begin{align*}
    D = \frac{\nu^2}{2(\nu+1)}[1-\langle v, v'\rangle^2]
    \lesssim \frac{(\nu\wedge\nu^2)\epsilon^2}{2}
\end{align*}
Thus the minimax lower bound becomes
\begin{align*}
    \inf\limits_{\hat{v}}\max\limits_{v_j\in \mathcal{P}(K_2,\frac{\epsilon}{2}) } 
    \E[\|\hat{v} - v_j\|\wedge\|\hat{v} + v_j\|] 
    & \gtrsim
    \frac{\epsilon}{4}
    (1 - \frac{n(\nu\wedge\nu^2)\epsilon^2}{2\log M_{\frac{\epsilon}{2}}} - \frac{\log2}{\log M_{\frac{\epsilon}{2}}})
\end{align*}

By Lemma \ref{k2_packing}, $\log N_{\frac{\epsilon}{2}} \sim \mathcal{O}\big(\log p\big)$, then 
\begin{align*}
    \inf\limits_{\hat{v}}\max\limits_{v_j\in \mathcal{P}(K_2,\frac{\epsilon}{2}) } 
    \E[\|\hat{v} - v_j\|\wedge\|\hat{v} + v_j\|]
    & \gtrsim 
    \frac{\epsilon}{4}
    (1 - \frac{n(\nu\wedge\nu^2)\epsilon^2}{2\log\log p} - \frac{\log2}{\log\log p}) \\
    & \gtrsim
    \frac{\epsilon}{4}
    (\frac{1}{2} - \frac{n(\nu\wedge\nu^2)\epsilon^2}{2\log\log p})
    \qquad \qquad 
    \text{if } p \geq e^4
\end{align*}
Pick $\epsilon = \sqrt{\frac{\log\log p}{2n(\nu\wedge\nu^2)}}$, and plug in to the above inequality get 
\begin{align*}
    \inf\limits_{\hat{v}}\max\limits_{v_j\in \mathcal{P}(K_2,\frac{\epsilon}{2}) } 
    \E[\|\hat{v} - v_j\|\wedge\|\hat{v} + v_j\|] 
    &\gtrsim \epsilon \gtrsim 
    \frac{\sqrt{\log\log p}}{(\nu\wedge\sqrt{\nu})\sqrt{n}}
\end{align*}
Let $M$ be the monotone cone in $\mathbb{R}^p$. Finally
\begin{align*}
    \inf\limits_{\hat{v}}\max\limits_{v_j\in M }
    \E[\|\hat{v} - v_j\|\wedge\|\hat{v} + v_j\|] 
    \geq
    \inf\limits_{\hat{v}}\max\limits_{v_j\in \mathcal{P}(K_2,\frac{\epsilon}{2}) } 
    \E[\|\hat{v} - v_j\|\wedge\|\hat{v} + v_j\|] 
    \gtrsim
    \frac{\sqrt{\log\log p}}{(\nu\wedge\sqrt{\nu})\sqrt{n}}
\end{align*}

%%%%%%%%%%%%%%%%%%%%%%%%%%%%%%%%
\pagebreak

\bibliographystyle{apalike}
\bibliography{bibfile}

\end{document}